\numberwithin{equation}{section}
\newtheorem{theor}[equation]{Theorem}
\newtheorem{lemma}[equation]{Lemma}
\newtheorem{defi}[equation]{Definition}
\newcommand{\cont}{\mathrm{cont}}
\newcommand{\res}{\mathrm{res}}
\newcommand{\Ind}{\mathrm{Ind}}
\newcommand{\Res}{\mathrm{Res}}
\newcommand{\s}{{\sf S}}
\newcommand{\Md}{\!\mod}
\newcommand{\Z}{\mathbb{Z}}
\renewcommand{\epsilon}{\varepsilon}
\renewcommand{\phi}{\varphi}
\newcommand{\xymat}{\xymatrix@R=6pt@C=10pt}
\newcommand{\la}{\lambda}
\newcommand{\be}{\beta}
\newcommand{\al}{\alpha}
\newcommand{\eps}{\epsilon}
\newcommand{\ga}{\gamma}
\newcommand{\de}{\delta}
\def\Par{{\mathscr {P}}}
\def\Tsym{T^{\mathrm{sym}}}
\def\Tspin{T^{\mathrm{spin}}}
\newcommand{\dbl}{\mathbf{\mathrm{dbl}}}
\def\Md#1{\text{ }(\text{\rm mod } #1)\,}
\begin{document}

\title[Decomposition numbers of 2-parts spin representations]{Decomposition numbers of 2-parts spin representations of symmetric groups in characteristic 2}

\author{\sc Lucia Morotti}
\address{Leibniz Universit\"{a}t Hannover\\ Institut f\"{u}r Algebra, Zahlentheorie und Diskrete Mathematik\\ 30167 Hannover\\ Germany} 
\address{Mathematisches Institut\\ Heinrich-Heine-Universit\"{a}t D\"{u}sseldorf\\ 40225 D\"{u}sseldorf\\ Germany} 
\email{lucia.morotti@uni-duesseldorf.de}

\thanks{During part of the work the author was supported by the DFG grant MO 3377/1-2. While working on the revised version the author was working at the Department of Mathematics of the University of York, supported by the Royal Society grant URF$\backslash$R$\backslash$221047.}

\begin{abstract}
We give explicit formulas to compute most of the decomposition numbers of reductions modulo 2 of irreducible spin representations of symmetric groups indexed by partitions with at most 2 parts. In many of the still open cases small upper bounds are found.
\end{abstract}

\maketitle

\section{Introduction}

Let $D$ be an irreducible representation of a double cover $\widetilde{\s}_n$ of a symmetric group $\s_n$. We say that $D$ is a spin representation if $D$ cannot be viewed also as a representation of $\s_n$.

It is well known that in characteristic 0 (pairs of) irreducible spin representations of the symmetric groups are labeled by strict partitions, that is partitions in distinct parts, see \cite{schur,s}. Not much is known about decomposition matrices of spin representations of symmetric groups. For example in general not even the shape of the decomposition matrix is known.

When reducing characteristic 0 spin representations modulo an odd prime, the obtained representations are still spin representations. In this case, which will not be considered in this paper, results on decomposition numbers consider maximal composition factors (that is, under a specified ordering of the columns, the last non-zero entry in each row of the decomposition matrix) \cite{BK2,BK3}, the shape of the decomposition matrix for small primes \cite{abo1,bmo} and decomposition numbers in certain specific blocks or classes of modules or for small $\widetilde{\s}_n$ \cite{fkm,ma,m2,Mu,Wales,y}.

On the other hand reductions modulo 2 of spin representations may also be viewed as representations of symmetric groups. In this case maximal composition factors and their multiplicities have been found in \cite{ben,bo}. This result can be used to rule out some characteristic 2 modules as been composition factors of a given spin representation. An improvement in this direction has been obtained in \cite[Lemma 4.2]{m}. Apart for the small $n$ cases \cite{gap,ModularAtlas}, the only other classes of modules for which decomposition numbers are known in this case are basic and second basic spin representations \cite{Wales} or RoCK blocks \cite[Section 5]{f2} and \cite[Section 5]{f3}.

%Spin representations of symmetric groups being also representations of symmetric groups when reduced to characteristic 2 but not to odd characteristic explains why the two corresponding cases are usually considered separately.

One particular class of modules of symmetric groups for which decomposition numbers are known are Specht modules indexed by partitions with at most 2 parts. In this case decomposition numbers have been found by James in \cite{j1,j2} (see also \cite[Theorem 24.15]{JamesBook}). The corresponding question, studying composition factors of reductions modulo $p$ of spin representations labeled by partitions with at most to parts, has been studied in \cite{m2} in odd characteristic. There irreducible characteristic $p$ spin representations which are composition factors of some (though not one particular) such characteristic 0 spin representation were explicitly described. Further it was shown that the corresponding part of the decomposition matrix is block triangular (with blocks corresponding to representations indexed by the same partition).

In this paper we will consider the above problem in characteristic 2, describing modules which are composition factors of the reduction modulo 2 of some spin representation with at most 2 parts and finding formulas for computing most of the corresponding decomposition numbers.

For any 2-regular partition $\la\vdash n$ let $D^\la$ be the corresponding characteristic 2 irreducible representation of the symmetric group $\s_n$ and $S(\la,\eps)$ be the corresponding characteristic 0 irreducible spin representation(s) of the double cover $\widetilde{\s}_n$,  with $\eps=0$ or $\pm$ depending on $\la$.

The first result we obtain is the following (see Section 2 for the definition of the double of a partition):

\begin{theor}\label{c1}
If $0\leq a\leq\lfloor(n-1)/2\rfloor$ and $\mu\in\Par_2(n)$ is such that $D^\mu$ is a composition factor of $S((n-a,a),\eps)$ then $\mu$ has at most 2 parts or it is the double of a partition with at most 2 parts.
\end{theor}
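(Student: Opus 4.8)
The plan is to use the branching rules for spin representations of symmetric groups in characteristic 0 together with the known modular branching rules (à la Kleshchev) for characteristic 2 representations of symmetric groups, and to argue by induction on $n$. First I would recall from Section 2 the structure of $S((n-a,a),\eps)$ upon restriction to $\widetilde{\s}_{n-1}$: by the classical branching rule for spin representations, $\Res S((n-a,a),\eps)$ is (up to the usual spin multiplicities) a sum of $S((n-1-a,a),\eps')$ and $S((n-a,a-1),\eps'')$, i.e.\ the constituents are again spin representations indexed by partitions with at most $2$ parts. Reducing modulo $2$, any composition factor $D^\mu$ of $S((n-a,a),\eps)$ therefore restricts to a sum of composition factors of reductions modulo $2$ of such smaller spin representations.

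The key step is then to translate ``$D^\mu$ appears in $S((n-a,a),\eps)\pmod 2$'' into a statement about the $2$-restriction (socle/head of $e_i$-type functors) of $D^\mu$, and to feed this into the inductive hypothesis. Concretely: if $D^\mu$ is a composition factor of $S((n-a,a),\eps)\bmod 2$, then for a suitable removable node (equivalently, a suitable $i\in\Z/2$ with $e_i D^\mu\neq 0$) the module $e_i D^\mu$ has a composition factor $D^\nu$ which is a composition factor of $S((n-1-a,a),\eps)\bmod 2$ or of $S((n-a,a-1),\eps)\bmod 2$. By induction $\nu$ has at most $2$ parts or is the double of a partition with at most $2$ parts. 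The combinatorial heart of the argument is to show that the only $2$-regular $\mu$ all of whose ``admissible'' $e_i$-images $\nu$ lie in this restricted set are themselves of one of the two allowed forms: $\mu$ has at most $2$ parts, or $\mu=\dbl(\kappa)$ for some $\kappa$ with at most $2$ parts. This is where one uses the explicit description of the doubling map and the shape of the corresponding Young diagrams — a partition which is not of either form must, after removing an appropriate (conormal/normal) node, still fail to be of either form, contradicting the inductive conclusion.

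To make the induction go through cleanly I would set up the base cases for small $n$ (where $a=0$ and $\mu=(n)$, the basic spin case, gives nothing outside the allowed set, cf.\ \cite{Wales}), and I would phrase the inductive step symmetrically in the two branches $(n-1-a,a)$ and $(n-a,a-1)$ so that the parity constraints coming from residues of nodes are handled uniformly. A technical point is that in characteristic $2$ the spin branching behaves slightly differently from the odd case (the reductions mod $2$ are genuine symmetric group modules), so I would be careful to use the characteristic $2$ version of the branching theorem and the fact, recalled in the introduction, that maximal composition factors of $S(\la,\eps)\bmod 2$ are controlled by \cite{ben,bo} and \cite{m}; these results give an a priori restriction on $\mu$ that can be used to terminate the induction and to exclude partitions with $3$ or more parts that are not doubles.

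The step I expect to be the main obstacle is the purely combinatorial classification just described: showing that the class of $2$-regular partitions ``with at most $2$ parts, or the double of such'' is closed under the inverse operation of adding a good/cogood node in the relevant residues, and conversely that no partition outside this class can have all of its relevant $e_i$-images inside it. The double-of-a-partition case is delicate because doubling interacts nontrivially with which nodes are normal for a given residue $i\in\{0,1\}$, so one has to check several configurations (depending on the $2$-adic shape of the two parts) by hand. Everything else — the branching reductions, the induction bookkeeping, the base cases — should be routine once this combinatorial lemma is in place.
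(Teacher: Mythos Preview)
Your inductive branching approach is unnecessarily elaborate and leaves the hardest step unresolved. The paper's proof is a two-line direct application of the very results you mention only as auxiliary tools (\cite{ben,bo} and \cite{m}). Concretely: Lemma~\ref{t3} (\cite[Lemma~4.2]{m}) says that if $D^\mu$ is a composition factor of $S(\la,\eps)$ and $\mu$ is \emph{not} the double of any partition, then $h(\mu)\le 2h(\la)-2$; for $\la=(n-a,a)$ this gives $h(\mu)\le 2$. If instead $\mu=\dbl(\nu)$, then Lemma~\ref{t2} (Benson's regularisation theorem) gives $\mu\unrhd(\overline{\dbl}(n-a,a))^R$, hence $2h(\nu)-1\le h(\mu)\le h(\overline{\dbl}(n-a,a))\le 4$, so $h(\nu)\le 2$. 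That is the entire proof; no induction, no branching, no case analysis.

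By contrast, the combinatorial closure lemma you single out as the ``main obstacle'' is a genuine gap in your plan: you would need to know that for \emph{every} $2$-regular $\mu$ outside the allowed class, some composition factor $D^\nu$ of some $e_iD^\mu$ also lies outside the class. But $e_iD^\mu$ typically has many composition factors beyond $D^{\tilde e_i\mu}$, governed by Kleshchev's branching, and checking that at least one of them is ``bad'' across all configurations of $\mu$ (especially the near-double shapes) is not obviously routine. Even if it can be made to work, it reproves from scratch a strictly weaker statement than Lemmas~\ref{t2} and~\ref{t3} already provide. Use those lemmas directly.
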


The above result leads us to study decomposition numbers of the forms $[S((n-a,a),\eps):D^{(n-b,b)}]$ and $[S((n-a,a),\eps):D^{\dbl(n-b,b)}]$. In the first case, provided $b<(n-1)/2$, we will give exact formulas for decomposition numbers in Theorem \ref{t1}. This result shows any module of the form $D^{(n-b,b)}$ is indeed a composition factor of the reduction modulo 2 of some module of the form $S((n-a,a),\eps)$. Further Theorem \ref{t1} shows that at most 2 rows of the corresponding part of the decomposition matrix are non-zero. In the second case it is known by \cite{ben} that $D^{\dbl(n-b,b)}$ is a composition factor of $S((n-b,b),\eps)$ and that the corresponding part of the decomposition matrix is triangular. We will compute most of the corresponding decomposition numbers in Theorems \ref{T100822} and \ref{T170223} and find some upper bounds in many of the other cases. In particular we find formulas or upper bounds for all but one column of the corresponding part of the decomposition matrix.

Before being able to state Theorem \ref{t1}, \ref{T100822} and \ref{T170223} we need some definitions.

\begin{defi}
Given $m\geq 0$, if $m=2^{a_1}+\ldots+2^{a_k}$ with $a_1>\ldots>a_k\geq 0$, let $d_m:=a_1+k-3$.
\end{defi}

As in \cite[Definition 24.12]{JamesBook}, for integers $\ell$ and $m$ with $\ell\geq 0$ we say that $\ell$ contains $m$ to base $2$ if there exists $k$ with  $0\leq m<2^k\leq\ell$ and further, for $\ell=\sum_ia_i2^i$ and $m=\sum_ib_i2^i$ the $2$-adic decompositions of $\ell$ and $m$, $b_i\in\{0,a_i\}$ for each $i$.

\begin{defi}
For $\ell,m$ with $\ell\geq 0$ let $g_{\ell,m}:=1$ if $\ell$ contains $m$ to base 2 or $g_{\ell,m}:=0$ else.
\end{defi}

The next theorem gives exact formulas for the decomposition numbers of the form $[S((n-a,a),\eps):D^{(n-b,b)}]$ with $b<(n-1)/2$.

\begin{theor}\label{t1}
Let $p=2$, $0\leq a\leq\lfloor(n-1)/2\rfloor$ and $0\leq b\leq\lfloor(n-3)/2\rfloor$. Then $[S((n-a,a),\eps):D^{(n-b,b)}]=2^{d_{n-2b+1}}$ if one of the following holds:
\begin{itemize}
\item $n\equiv 0\Md{2}$ and $n-2a=2$,

\item $n\equiv 0\Md{8}$, $2\mid b$ and $n-2a=4$,

\item $n\equiv 4\Md{8}$, $2\nmid b$ and $n-2a=4$,

\item $n\equiv 1\text{ or }3\Md{8}$, $2\mid b$ and $n-2a=1$,

\item $n\equiv 1\text{ or }3\Md{8}$, $2\nmid b$ and $n-2a=3$,

\item $n\equiv 5\text{ or }7\Md{8}$, $2\nmid b$ and $n-2a=1$,

\item $n\equiv 5\text{ or }7\Md{8}$, $2\mid b$ and $n-2a=3$.
\end{itemize}
In all other cases $[S((n-a,a),\eps):D^{(n-b,b)}]=0$.
\end{theor}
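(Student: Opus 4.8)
The plan is to reduce the computation of $[S((n-a,a),\eps):D^{(n-b,b)}]$ to known decomposition numbers of Specht modules of symmetric groups via a reduction-mod-$2$ comparison. The key input is that the reductions modulo $2$ of spin representations are ordinary symmetric group representations, so each $S((n-a,a),\eps)$, reduced mod $2$, has a well-defined Brauer character on $2$-regular classes; on the other side, the Specht module $S^{(n-b,b)}$ (in characteristic $0$) reduces mod $2$ with composition factors governed by James's formula \cite[Theorem 24.15]{JamesBook}, which in the $2$-part case says $[S^{(n-c,c)} : D^{(n-b,b)}] = g_{?,?}$ for suitable arguments built from the $2$-adic expansions. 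So the first step is to write out, using Section 2's combinatorics of doubles and the branching/character theory for $\widetilde{\s}_n$, an expression for the Brauer character of $S((n-a,a),\eps) \pmod 2$ as an explicit $\Z$-linear combination of (reductions of) Specht modules $S^{(n-c,c)}$. I expect this to come from the classical relation between spin characters of $\widetilde{\s}_n$ and characters of $\s_n$ after restriction to $2$-regular (equivalently odd-order) classes — the point being that on $2$-regular classes a spin character is, up to an explicit power of $2$ and a sign, a sum of two ordinary characters indexed by the relevant partitions; for two-part strict partitions $(n-a,a)$ these ordinary characters should be (conjugates of) $\chi^{(n-a,a)}$ and $\chi^{(n-a+1,a-1)}$ or similar, and the power of $2$ that appears will be exactly the $2^{d_{n-2b+1}}$-type factor.

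The second step is purely combinatorial: having the Brauer character of $S((n-a,a),\eps)$ as a combination of $[S^{(n-c,c)}]$, and James's closed form for $[S^{(n-c,c)}:D^{(n-b,b)}]$ in terms of the "contains to base $2$" relation $g_{\ell,m}$, one computes the coefficient of $D^{(n-b,b)}$. This is where the congruence conditions on $n \bmod 8$, the parity of $b$, and the value of $n-2a \in \{1,2,3,4\}$ will all emerge: they are exactly the arithmetic conditions under which the relevant $g_{\ell,m}$ are nonzero and the $2$-adic carry pattern yields $d_{n-2b+1}$ rather than $0$. The case division into seven bullet points strongly suggests that one handles $n$ even and $n$ odd separately, and within each according to $n \bmod 8$, tracking how $2$-adic digits of $n-2b+1$ interact with those of the shift coming from $a$. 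One must also invoke Theorem \ref{c1} to know that no other $D^\mu$ with $\mu$ a double can contribute to these particular rows, and a dimension or block argument (blocks of $\widetilde{\s}_n$, $2$-cores) to cut down which $(n-a,a)$ and $(n-b,b)$ can lie in a common block — outside those, the decomposition number is automatically $0$, giving the "in all other cases" clause.

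The main obstacle, I expect, is controlling the signs and the exact power of $2$ in the spin-to-ordinary character comparison: the identity expressing a spin character restricted to $2$-regular classes in terms of ordinary characters carries a factor $2^{\lfloor \ell(\mu)/2 \rfloor}$ or similar (depending on conventions for the double cover and whether $n-\ell$ is even), and one must reconcile this with the $2^{d_m}$ appearing in the statement, where $d_m = a_1 + k - 3$ depends on both the top $2$-adic digit and the digit-count of $m = n-2b+1$. Getting $d_{n-2b+1}$ to drop out cleanly will require carefully matching the $2$-adic valuation contributed by the character-theoretic factor against the valuation contributed by James's $g_{\ell,m}$-expansion of the Specht decomposition numbers; a secondary subtlety is the $\eps$-bookkeeping (whether $\la$ gives one or a pair of spin modules), which affects the character by a factor of $2$ or $\sqrt{2}$-type normalization and must not be double-counted. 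Once those normalizations are pinned down, the remaining verification of each of the seven cases is a finite check on $2$-adic digit strings and should be routine.
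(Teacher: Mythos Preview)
Your proposal has a genuine gap: the ``classical relation'' you invoke, expressing the $2$-Brauer character of $S((n-a,a),\eps)$ as an explicit $\Z$-linear combination of reductions of two-row Specht modules $S^{(n-c,c)}$ with a controlled power of~$2$, does not exist in the form you need. The nearest available identity is Stembridge's tensor formula $[S^{(n-x,x)}\otimes S((n))]=(1+\de_{x\not=0})[S((n-x,x))]+(1+\de_{x\not=1})[S((n-x+1,x-1))]$, which one can invert to write $[S((n-a,a))]$ as an alternating sum of $[S^{(n-c,c)}\otimes S((n))]$. But to extract the multiplicity of $D^{(n-b,b)}$ from this you would then need $[D^{(n-y,y)}\otimes S((n)):D^{(n-b,b)}]$ for all $y$; the Graham--James result that handles such tensor products produces composition factors of the form $D^{\dbl(n-y,y)}$, not $D^{(n-b,b)}$ with $b\leq(n-3)/2$. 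Indeed the paper uses exactly this Stembridge/Graham--James route in Lemma~\ref{T100822_2}, but only for $n\equiv 2\Md{4}$, $a$ odd, and only to compute the $D^{\dbl(n-b,b)}$ multiplicities (where it gives zero contribution from two-row $D^{(n-b,b)}$, consistent with Theorem~\ref{t1} in that case). For the nonzero entries in Theorem~\ref{t1} this approach gives no traction, and the power $2^{d_{n-2b+1}}$ does not arise from any normalization of a spin character: it comes from the structure of $P^{(n-b,b)}$ and varies with~$b$, not with~$a$.

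The paper's actual argument is of a completely different nature. It uses Brauer reciprocity $[S((n-a,a),\eps):D^{(n-b,b)}]=[P^{(n-b,b)}:S((n-a,a),\eps)]$ and computes the projective $P^{(n-b,b)}$ modulo the ideal $R_n$ of projectives indexed by partitions not of the form $(n-c,c)$ or $\dbl(n-c,c)$ (this is where Theorem~\ref{c1} enters, via Lemma~\ref{e2}). The computation is by induction on $n$: for $n$ odd one applies $f_i^2$ to $P^{(n-b-1,b-1)}$ (or $f_0f_1$ to $P^{(n-2)}$ when $b=0$) and shows that modulo $R_n$ the result is $2[P^{(n-b,b)}]$, using Lemma~\ref{e3} to control the error terms and tracking explicitly how $f_i^2$ acts on the single spin summand $S((n-c-2,c))$ appearing in $P^{(n-b-1,b-1)}$ by induction. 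An eight-way case split on $n\bmod 8$ and the parity of~$b$ then reads off the multiplicities; two of the subcases require an auxiliary projective ($f_1f_0P^{(n-b-2,b)}$) to separate contributions. The even-$n$ case is deduced from the odd case by applying a single~$f_i$. The exponent $d_{n-2b+1}$ is carried through the induction, with the identity $d_{n-2b+1}=d_{n-2b}+1$ and a binary-digit computation (in the $b=0$ step) producing it; it is not a character-theoretic normalization factor.
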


In the next theorem we describe most decomposition numbers of the form $[S((n-a,a),\eps):D^{\dbl(n-b,b)}]$.

\begin{theor}\label{T100822}
Let $p=2$, $0\leq a\leq\lfloor(n-1)/2\rfloor$ and $1\leq b<n/2$ with $\dbl(n-b,b)\in\Par_2(n)$. Then
\begin{itemize}
\item $[S((n-a,a),\eps):D^{\dbl(n-b,b)}]=g_{n-2b,a-b}$ if one of the following holds:
\begin{itemize}
\item $n\equiv 1\Md{4}$ and $b$ is even,

\item $n\equiv 2\Md{4}$ and $b$ is odd,

\item $n\equiv 3\Md{4}$ and $b$ is odd,
\end{itemize}

\item $[S((n-a,a),\eps):D^{\dbl(n-b,b)}]=2g_{n-2b,a-b}$ if $n\equiv 2\Md{4}$ and $b$ is even,

\item $[S((n-a,a),\eps):D^{\dbl(n-b,b)}]=g_{n-2b,a-b}-g_{n-2b-2,a-b-1}$ if one of the following holds:
\begin{itemize}
\item $n\equiv 1\Md{4}$ and $b$ is odd,

\item $n\equiv 3\Md{4}$ and $b$ is even,
\end{itemize}

\item $[S((n-a,a),\eps):D^{\dbl(n-b,b)}]=g_{n-2b,a-b}+2g_{n-2b-2,a-b-1}$ if $n\equiv 0\Md{4}$ and $b$ is odd.
\end{itemize}
\end{theor}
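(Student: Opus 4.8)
The plan is to compute the decomposition numbers $[S((n-a,a),\eps):D^{\dbl(n-b,b)}]$ by relating spin characters of $\widetilde{\s}_n$ to ordinary characters of $\s_n$ via the reduction-mod-$2$ functor, and then exploiting the triangularity already recorded in the excerpt (from \cite{ben}) together with the known $2$-part Specht decomposition numbers of James \cite{j1,j2}. First I would fix the relevant combinatorial setup: since $\dbl(n-b,b)$ has at most four parts (two ``doubled'' rows), and $D^{\dbl(n-b,b)}$ is a composition factor of $S((n-b,b),\eps)$ with the corresponding block of the decomposition matrix triangular, the task reduces to identifying, for each $a$, the multiplicity with which this fixed irreducible appears as one runs $a$ from $b$ up to $\lfloor (n-1)/2\rfloor$. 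The appearance of $g_{n-2b,a-b}$ strongly suggests that after a reduction step the problem collapses to a decomposition number for a $2$-part Specht module of $\s_{n-2b}$ (or a close variant), where James's ``contains to base $2$'' criterion \cite[Theorem 24.15]{JamesBook} governs exactly when $D^{(m-c,c)}$ occurs in $S^{(m-c',c')}$; the index $a-b$ plays the role of $c$ and $n-2b$ the role of $m$.

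The key steps, in order, would be: (1) Use the branching/restriction behaviour of spin modules — i.e.\ the $i$-induction and $i$-restriction operators and the modular branching rules for $\widetilde{\s}_n$ (as developed by Brundan–Kleshchev and used in \cite{ben,bo,m}) — to set up an inductive scheme on $b$, peeling off the first doubled pair of rows of $\dbl(n-b,b)$. (2) Relate $[S((n-a,a),\eps):D^{\dbl(n-b,b)}]$ to a sum of terms $[S((n-2b-a',a'),\eps'):D^{(n-2b-b',b')}]$ (or the analogous Specht-module statement), where the new partitions have genuinely two parts, so that Theorem \ref{t1} — which we are allowed to assume — becomes applicable in the base-case analysis. (3) Combine the resulting closed forms according to the residue of $n$ modulo $4$ and the parity of $b$; the four cases in the statement correspond to the four combinations, with the signs in $g_{n-2b,a-b}-g_{n-2b-2,a-b-1}$ and the coefficient $2$ emerging from whether the relevant Specht module or spin module is ``of type $0$'' or ``of type $\pm$'' (which is exactly what is controlled by $n \bmod 4$ together with the parity of the number of parts — here $b$), and from the doubling multiplicity in the basic-spin reduction. (4) Verify the edge/degenerate cases ($a=b$, $a$ near $\lfloor(n-1)/2\rfloor$, $\dbl(n-b,b)$ failing $2$-regularity unless the hypothesis $\dbl(n-b,b)\in\Par_2(n)$ holds) separately, checking that the formula $g_{n-2b,a-b}$ correctly returns $1$ at $a=b$ and $0$ once $a-b$ exceeds the reach of the base-$2$ containment.

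The main obstacle I expect is controlling the \emph{multiplicities} precisely rather than merely the support: the branching argument in step (1)–(2) typically produces an alternating sum or a sum of decomposition numbers for smaller $n$, and one must show that this collapses to exactly the clean expressions stated, with no extra error terms. Concretely, the difference $g_{n-2b,a-b}-g_{n-2b-2,a-b-1}$ looks like the output of an inclusion–exclusion coming from a short exact sequence relating $S((n-a,a),\eps)$-type modules of consecutive $n$, and the factor-of-$2$ phenomena (the $2g_{n-2b,a-b}$ and $+2g_{n-2b-2,a-b-1}$ cases) must be pinned down by a careful bookkeeping of the $\eps\in\{0,\pm\}$ labels and of how a single characteristic-$0$ spin module may split, or two may coincide, upon reduction — this is where I anticipate the technical heart of the argument, and it will likely require the refined composition-factor bounds of \cite[Lemma 4.2]{m} to exclude spurious contributions and to force the triangular block to have exactly the shape the formula predicts.
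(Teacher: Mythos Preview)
Your outline correctly identifies that $i$-induction/restriction operators are the main tool, and that the final formulas should reduce to James-type $g$-coefficients. But two key ingredients of the actual argument are missing, and without them the plan does not go through.

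First, the base case is not Theorem~\ref{t1}. That theorem computes the multiplicities of the genuinely $2$-part simples $D^{(n-b,b)}$, whereas here you need to know one column of the $D^{\dbl(n-b,b)}$-part of the decomposition matrix to get started. In the paper the anchor is the case $n\equiv 2\Md{4}$, $b$ odd, and it is \emph{not} obtained by branching: it comes from Stembridge's tensor-product formula $[S^{(n-x,x)}\otimes S((n))]=(1+\de_{x\neq 0})[S((n-x,x))]+(1+\de_{x\neq 1})[S((n-x+1,x-1))]$, together with James's $2$-part Specht decomposition numbers and the Graham--James identification $D^{(n-2z-1,2z+1)}\otimes S((n))\cong D^{\dbl(n-2z-1,2z+1)}$ in the relevant block. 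Your proposal has no substitute for this step; an induction on $b$ with Theorem~\ref{t1} as input never touches the $\dbl$-columns.

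Second, the inductive scheme is not ``peel off a doubled pair of rows and reduce to $\s_{n-2b}$''. Rather, one works with the projectives $P^{\dbl(n-b,b)}$ and shows (Lemma~\ref{L150223}) that a specific word $F$ in the $f_i,e_i$ takes $P^{\dbl(n-b-x,b-y)}$ (where $n-x-y\equiv 2\Md{4}$ and $b-y$ is odd, i.e.\ the already-known case) to $C\cdot P^{\dbl(n-b,b)}$ modulo projectives in $R_n$. One then computes $[FP^{\dbl(n-b-x,b-y)}:S((n-a,a))]$ by the spin branching rules, using the base case to supply $[P^{\dbl(n-b-x,b-y)}:S(\nu)]$ for $2$-part $\nu$. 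The case split in the statement arises from the six $(n\bmod 4,\, b\bmod 2)$ choices of $(F,C,x,y)$, not from an $\eps$-bookkeeping as you suggest. The factors of $2$ and the subtraction $-g_{n-2b-2,a-b-1}$ come out of the explicit computation of $[FS((n-x-y-2z-1,2z+1)):S((n-a,a))]$ for the relevant $F$, followed by elementary $2$-adic identities among the $g$-symbols.
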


The cases $b=0$ or $n\equiv 0\Md{4}$ and $b$ even are not covered by Theorem \ref{T100822}. In the second case, for $b\geq 2$, though we are not able to compute all decomposition numbers exactly, we can still find upper bounds and in some cases exact decomposition numbers. In the next theorem $\nu_2$ is the 2-adic valuation.

\begin{theor}\label{T170223}
Let $p=2$, $n\equiv 0\Md{4}$, $2\leq b\leq (n-6)/2$ even and  $0\leq a\leq (n-2)/2$. Then
\[[S((n-a,a),\eps):D^{\dbl(n-b,b)}]\leq 2g_{n-2b-3,a-b}+2g_{n-2b-3,a-b-3}
%\left\{\begin{array}{ll}
%2g_{n-2b-4,a-b}+2g_{n-2b-4,a-b-4},&a\text{ is even,}\\
%2g_{n-2b-4,a-b-1}+2g_{n-2b-4,a-b-3},&a\text{ is odd}
%\end{array}\right.
\]
with equality holding if
\begin{align*}
&g_{n-2b+1,c-b}+g_{n-2b-1,c-b-1}-g_{n-2b-3,c-b-2}\\
&=\left\{\begin{array}{ll}
\de_{c\not=n/2}g_{n-2b-4,c-b}+(1+\de_{c\not=n/2})g_{n-2b-4,c-b-4},&c\text{ is even,}\\
2g_{n-2b-4,c-b-1}+g_{n-2b-4,c-b-5},&c\text{ is odd.}
\end{array}\right.
\end{align*}
holds for some $c\in\{a,a+1\}$ with $c-b\equiv 0$ or $1\Md{4}$.

In particular if $a-b\equiv 2\Md{4}$ then $[S((n-a,a),\eps):D^{\dbl(n-b,b)}]=0$. If $a-b\not\equiv 2\Md{4}$ then equality holds if $\nu_2(\lfloor(a-b+1)/4\rfloor)\geq\nu_2((n-2b)/4)$ or $g_{n-2b-4,4\lfloor (a-b+1)/4\rfloor-4}=0$.
\end{theor}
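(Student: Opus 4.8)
The proof of Theorem \ref{T170223} will proceed by a careful analysis of the branching behaviour of the relevant modules, building on the exact formulas already established in Theorems \ref{t1} and \ref{T100822}. The plan is to fix $n\equiv 0\Md 4$ and an even $b$ in the stated range, and to study the multiplicities $[S((n-a,a),\eps):D^{\dbl(n-b,b)}]$ as $a$ varies. First I would use the restriction functors $e_i$ (or the socle/head branching operators $\tilde e_i$) together with the known action on spin Specht modules: restricting $S((n-a,a),\eps)$ one residue at a time moves $(n-a,a)$ towards the $b=0$ or $b$ odd cases already covered, and restricting $D^{\dbl(n-b,b)}$ produces (at the Grothendieck-group level) a controlled sum of modules of the form $D^{\dbl(n-b',b')}$ and possibly $D^{(n-b',b')}$. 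The combination of these two restrictions, compared against Theorem \ref{T100822} (for the now-odd second parameter) and Theorem \ref{t1}, yields recursive relations among the unknown numbers.

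The key technical input is the observation embedded in the ``equality holds if'' clause: the quantity
\[
g_{n-2b+1,c-b}+g_{n-2b-1,c-b-1}-g_{n-2b-3,c-b-2}
\]
is exactly what one obtains from the already-known decomposition numbers when one restricts once and reads off the contribution in a neighbouring block, while the right-hand side piecewise expression is what the recursion predicts if the bound $2g_{n-2b-3,a-b}+2g_{n-2b-3,a-b-3}$ were attained. So the strategy is: (1) set up the one-step recursion relating $[S((n-a,a),\eps):D^{\dbl(n-b,b)}]$ for consecutive values of $a$ via branching; (2) solve the recursion to get the closed-form upper bound $2g_{n-2b-3,a-b}+2g_{n-2b-3,a-b-3}$, using the combinatorics of ``contains to base 2'' ($g_{\ell,m}$ is multiplicative over disjoint binary blocks, and $g_{n-2b-3,\cdot}$ controls how a number can be split once the low bits of $n-2b$ are stripped); (3) identify the precise obstruction to equality as the possible presence of an extra composition factor $D^{\dbl(n-b-2,b+2)}$ or $D^{(n-b',b')}$ that the crude count cannot see, and show this obstruction vanishes exactly when the displayed identity holds for some $c\in\{a,a+1\}$ with $c-b\equiv0,1\Md 4$.

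For the ``In particular'' statements: the case $a-b\equiv 2\Md 4$ should fall out because all of $g_{n-2b-3,a-b}$, $g_{n-2b-3,a-b-3}$ vanish in that residue class (as $n-2b-3$ is odd and $n-2b\equiv 0\Md 4$, so $n-2b-3\equiv 1\Md 4$, forcing the low two bits of any contained number to be constrained), making the upper bound $0$; one then only needs the trivial lower bound $\geq 0$, so no equality hypothesis is required. For $a-b\not\equiv 2\Md 4$, one checks the displayed identity directly: writing $a-b$ or $a-b+1$ in the form $4\lfloor(a-b+1)/4\rfloor + r$ with small $r$, the condition $\nu_2(\lfloor(a-b+1)/4\rfloor)\geq\nu_2((n-2b)/4)$ guarantees that the binary ``carry pattern'' needed to evaluate each $g$ is compatible, so both sides of the identity simplify to the same value; the alternative hypothesis $g_{n-2b-4,4\lfloor(a-b+1)/4\rfloor-4}=0$ kills the only term where the two sides could disagree.

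The main obstacle, and the part requiring the most care, is step (3): controlling the ``invisible'' composition factors. The crude branching count only gives an inequality because a single application of $e_i$ can merge contributions from $D^{\dbl(n-b,b)}$ with contributions from modules in adjacent blocks, and one must show that in the cases where the explicit identity holds, no such merging occurs — equivalently, that the relevant $\Ext^1$ or branching multiplicity between the ambiguous pair of modules is forced to be what the identity predicts. I expect this to require a separate combinatorial lemma on the functions $d_m$ and $g_{\ell,m}$ (essentially a ``conservation law'' saying the total count is preserved under restriction), together with an induction on $n$ whose base cases are handled by Theorems \ref{t1} and \ref{T100822} and by the small-$n$ data in \cite{gap,ModularAtlas}.
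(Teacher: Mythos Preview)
Your proposal has the right high-level instinct --- relate the unknown column to those already computed by Theorems \ref{t1} and \ref{T100822} via branching --- but the mechanism you describe is not the one that works, and the concrete steps you outline would not close.

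The paper does not restrict irreducibles or run a recursion in $a$. It works entirely with \emph{projective} modules and \emph{induction} functors, using the reciprocity $[S((n-a,a),\eps):D^{\dbl(n-b,b)}]=[P^{\dbl(n-b,b)}:S((n-a,a))]$. The structure is a sandwich. One starts from $P^{\dbl(n-b-5,b-1)}$ at level $n-6$, where $n-6\equiv 2\Md 4$ and $b-1$ is odd, so its two-part spin multiplicities are completely known by Lemma \ref{T100822_2}. Applying $f_if_{1-i}^2f_i^3$ produces a projective containing $12\,P^{\dbl(n-b,b)}$ as a summand; since any further summands contribute non-negatively, the explicit computation of $\tfrac{1}{12}[f_if_{1-i}^2f_i^3S((n-2z-7,2z+1)):S((n-a,a))]$ gives the upper bound. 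For the equality criterion one applies one more $f_i$, reaching level $n+1\equiv 1\Md 4$: there $f_iP^{\dbl(n-b,b)}$ contains $2P^{\dbl(n-b+1,b)}+2P^{\dbl(n-b,b+1)}$, and both of these projectives are again completely known (on the spin side) by Theorem \ref{T100822}. The left-hand side of the displayed identity is exactly $[P^{\dbl(n-b+1,b)}:S((n-c+1,c))]+[P^{\dbl(n-b,b+1)}:S((n-c+1,c))]$; the right-hand side is $\tfrac{1}{24}[f_i^2f_{1-i}^2f_i^3P^{\dbl(n-b-5,b-1)}:S((n-c+1,c))]$. When they agree the remainder at level $n+1$ is zero, and since $S((n-c+1,c))$ occurs in $f_iS((n-c+1,c-1))$ and $f_iS((n-c,c))$, this forces the remainder at level $n$ to vanish on $S((n-a,a))$ for $a\in\{c-1,c\}$. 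The ``in particular'' assertions then reduce to the elementary identity $g_{k,\ell}=g_{k-1,\ell}+g_{k-1,\ell-1}$ under the stated $\nu_2$ or vanishing hypotheses.

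The gaps in your plan are these. First, restricting $D^{\dbl(n-b,b)}$ does not produce a controlled combination of $D^{\dbl(n'-b',b')}$ and $D^{(n'-b',b')}$: composition factors of $e_iD^\lambda$ need not stay in this family, so no closed recursion in $a$ arises that way. Second, there is no homological input here --- no $\Ext^1$ and no induction on $n$; everything happens at the Grothendieck-group level, and the bound and the equality test are each obtained in a single shot from exact formulas already available at levels $n-6$ and $n+1$. Your reading of the $a-b\equiv 2\Md 4$ case and of the $\nu_2$ condition is correct in spirit, but the object that has to be computed to verify the identity is the induced projective $f_i^2f_{1-i}^2f_i^3P^{\dbl(n-b-5,b-1)}$, not a restricted irreducible.
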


Note that if $b\equiv n/2-2\Md{4}$, then $\nu_2(\lfloor(a-b+1)/4\rfloor)\geq\nu_2((n-2b)/4)$ always holds. In particular about half of the columns covered in the above theorem can be completely computed through it.

Further, by \cite[Theorem 1.4]{m3},
\[[S((n-a,a),\eps):D^{\dbl(n-b,b)}]=[S((n-a+2,a+2),\eps):D^{\dbl(n-b+2,b+2)}]\]
whenever $a,b>0$ and all of the above modules are defined (in many cases this follows also from Theorems \ref{T100822} and \ref{T170223}). In particular decomposition numbers $[S((n-a,a),\eps):D^{\dbl(n-b,b)}]$ with $n\equiv 0\Md{4}$ and $b\geq 2$ even (that is those covered in Theorem \ref{T170223}) only depend on $n-2b$ and $a-b$.

The assumption $a>0$ in the previous paragraph could be dropped (using slightly more complicated formulas), but not the assumption $b>0$. For example, from decomposition matrices in GAP it can be recovered that $[S((7,1),0):D^{(5,3)}]=1$ but $[S((9,3),0):D^{(6,4,2)}]=2$ and that $[S((5,4),\pm):D^{(5,4)}]=1$ but $[S((7,6),\pm):D^{(6,5,2)}]=0$.

In Section \ref{s2} we will recall some basic definitions and results and prove Theorem \ref{c1}. In Section \ref{s3} we will prove some results on projective modules. Theorems \ref{t1}, \ref{T100822} and \ref{T170223} will then be proved in Sections \ref{s4}, \ref{s5} and \ref{s6} respectively. Some (partial) decomposition matrices, computed using the above results, are given in Appendix \ref{appendix}. 

Looking at Theorems \ref{c1} and \ref{t1} one may ask whether all composition factors of $S(\la,\eps)$ are of the form $D^{\dbl(\mu)}$ for some partition $\mu$ for all strict partitions $\la$ with $\dbl(\la)$ 2-regular. This is in general false. For example, looking at known decomposition matrices and comparing characters, it can be checked that
\begin{align*}
[S((10,5,1),\pm)]&=[D^{(7,6,3)}]+[D^{(6,4,3,2,1)}]
\end{align*}
and
\begin{align*}
[S((11,5,1),0)]&=8[D^{(17)}]+2[D^{(9,8)}]+4[D^{(9,7,1)}]+3[D^{(9,5,3)}]+2[D^{(8,6,3)}]\\
&\hspace{11pt}+2[D^{(7,6,4)}]+[D^{(7,6,3,1)}]+2[D^{(7,5,3,2)}]+[D^{(6,5,3,2,1)}].
\end{align*}

\section{Notation and basic results}\label{s2}

Let $n\geq 0$ and $\widetilde{\s}_n$ be a double cover of $\s_n$. Then there exists $z$ central in $\widetilde{\s}_n$ of order 2 with $\s_n\cong\widetilde{\s}_n/\langle z\rangle$. Representations of $\widetilde{\s}_n$ on which $z$ acts trivially can also be viewed as $\s_n$-representations, while those on which $z$ acts as $-1$ are called spin representations. Note that reductions modulo 2 of spin representations can always be viewed as representations of the corresponding symmetric group $\s_n$. In particular all their composition factors are irreducible characteristic 2 representations of $\s_n$ (viewed as $\widetilde{\s}_n$-representations).

Let $\Par(n)$ be the set of partitions of $n$. Further let $\Par_2(n)$ be the set of 2-regular partitions of $n$, that is partitions in distinct parts or strict partitions. For any partition $\la$, let $h(\la)$ be the number of parts of $\la$ and $h_2(\la)$ be the number of even parts of $\la$.

Identifying partitions and their Young diagrams, if $\la\in\Par(n)$ and $A=(i,j)$ is a node, we say that $A$ is a removable (resp. addable) node of $\la$ if $A\in\la$ (resp. $A\not\in\la$) and $\la\setminus \{A\}$ (resp. $\la\cup\{A\}$) is the Young diagram of a partition. If $\la\in\Par_2(n)$ we say that $A$ is a bar-removable (resp. bar-addable) node of $\la$ if $A$ is removable (resp. addable) and $\la\setminus\{A\}$ (resp. $\la\cup\{A\}$) is a strict partition.

It is well known, see for example \cite{JamesBook,JK,schur,s}, that $\Par(n)$ labels irreducible representations of $\s_n$ in characteristic 0, while $\Par_2(n)$ labels both the irreducible representations of $\s_n$ in characteristic 2 and (pairs) of irreducible spin representations. For $\la\in\Par(n)$ we denote by $S^\la$ the irreducible characteristic 0 representation of $\s_n$ labeled by $\la$. As in the introduction, for $\la\in\Par_2(n)$ define $D^\la$ to be the irreducible characteristic 2 representation and $S(\la,\eps)$ the irreducible spin representation(s) indexed by $\la$. Here $\eps=0$ if $n-h(\la)$ is even and $\eps\in\{\pm\}$ if $n-h(\la)$ is odd. In the following we will also work with modules $S(\la)$: for $\la\in\Par_2(n)$ we define $S(\la)$ to be either $S(\la,0)$ or $S(\la,+)\oplus S(\la,-)$ depending on the parity of $n-h(\la)$. Further, for any $\la\in\Par_2(n)$, let $P^\la$ be the indecomposable projective module of $\widetilde{\s_n}$ with socle $D^\la$.

For a partition $\la=(\la_1,\ldots,\la_h)$ with $h=h(\la)$, let
\begin{align*}
\dbl(\la)&=(\lceil (\la_1+1)/2\rceil,\lfloor (\la_1-1)/2\rfloor),\ldots,\lceil (\la_h+1)/2\rceil,\lfloor (\la_h-1)/2\rfloor)),\\
\overline{\dbl}(\la)&=(\lceil \la_1/2\rceil,\lfloor \la_1/2\rfloor),\ldots,\lceil \la_h/2\rceil,\lfloor \la_h/2\rfloor)).
\end{align*}
Further let $\la^R$ be the regularisation of $\la$ as defined in \cite[6.3.48]{JK} for $p=2$.

It is easy to check that $\overline{\dbl}(\la)$ is always a partition for any $\la\in\Par_2(n)$, so that in this case $(\overline{\dbl}(\la))^R$ is well defined.

Further if $\dbl(\la)\in\Par_2(n)$ then $\dbl(\la)=(\overline{\dbl}(\la))^R$. This can be checked by showing that $\dbl(a)=(\overline{\dbl}(a))^R$ for any $a\geq 1$. So
\[\dbl(\la)=((\overline{\dbl}(\la_1))^R,\ldots,(\overline{\dbl}(\la_h))^R)\]
and $\overline{\dbl}(\la)$ have the same number of nodes on each ladder. Since $\dbl(\la)$ is a 2-regular partition it follows that $\dbl(\la)=(\overline{\dbl}(\la))^R$.

The following lemma, which is an analog of James' regularisation result, has been proved in \cite[Theorem 1.2]{ben} and \cite[Theorem 5.1]{bo}.

\begin{lemma}\label{t2}
Let $\la,\mu\in\Par_2(n)$. If $[S(\la,\eps):D^\mu]>0$ then $\mu\unrhd(\overline{\dbl}(\la))^R$. Further $[S(\la,\eps):D^{(\overline{\dbl}(\la))^R}]=2^{\lfloor h_2(\la)/2\rfloor}$.
\end{lemma}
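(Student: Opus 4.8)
The statement is a spin analogue of James's regularisation theorem, and the plan is to prove it by the same kind of argument: induction on $n$ via restriction to $\widetilde{\s}_{n-1}$, combined with the combinatorics of $2$-ladders. First one reduces to $\s_n$-modules: since $\langle z\rangle$ is a $2$-group, $\overline{S(\la,\eps)}$ is an honest $\F_2\s_n$-module and all its composition factors lie among the $D^\mu$, $\mu\in\Par_2(n)$, so the claim becomes that $D^{(\overline{\dbl}(\la))^R}$ is the dominance-minimal composition factor of $\overline{S(\la,\eps)}$, with multiplicity $2^{\lfloor h_2(\la)/2\rfloor}$. The base cases are $\la=(n)$ (basic spin), where the reduction and its composition factors are supplied by Wales, and small $n$.

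For the inductive step I would restrict $\overline{S(\la,\eps)}$ to $\s_{n-1}$. On the one hand, Morris's branching rule for spin characters gives $\langle\la\rangle{\downarrow}_{\widetilde{\s}_{n-1}}=\sum_{A}m_A\,\langle\la\setminus A\rangle$, summed over bar-removable nodes $A$ of $\la$ with $m_A\in\{1,2\}$ (the value $2$ essentially occurring when $A$ sits in an even part), so that $\overline{S(\la,\eps)}{\downarrow}_{\s_{n-1}}$ is the corresponding effective sum of the modules $\overline{S(\la\setminus A)}$, whose dominance-minimal composition factors are known by induction to be the $D^{(\overline{\dbl}(\la\setminus A))^R}$ with multiplicities $2^{\lfloor h_2(\la\setminus A)/2\rfloor}$. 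On the other hand, bar-removal and doubling are compatible: $\overline{\dbl}(\la\setminus A)$ is obtained from $\overline{\dbl}(\la)$ by removing a single node, and James's compatibility of regularisation with node removal relates $(\overline{\dbl}(\la\setminus A))^R$ to the removal of a good node from $(\overline{\dbl}(\la))^R$. Feeding this into Kleshchev's modular branching rule for $D^{(\overline{\dbl}(\la))^R}$ one concludes that $D^{(\overline{\dbl}(\la))^R}$ must occur in $\overline{S(\la,\eps)}$, is dominance-minimal among its composition factors (nothing dominance-smaller can appear, as it would then have to occur in some $\overline{S(\la\setminus A)}$, contradicting the inductive dominance bound together with the monotonicity $\nu\unrhd\mu\Rightarrow\nu^R\unrhd\mu^R$ of regularisation), and can be counted exactly.

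The main obstacle is the bookkeeping of the powers of $2$: one must check that the multiplicities $m_A$ in Morris's rule, combined along a chain of bar-removals reaching $(\overline{\dbl}(\la))^R$ and weighed against the branching multiplicities of $D^{(\overline{\dbl}(\la))^R}$, collapse to precisely $2^{\lfloor h_2(\la)/2\rfloor}$ rather than some other power of $2$; the parity of the number of even parts, which $\lfloor h_2(\la)/2\rfloor$ records, enters through which bar-removable nodes carry $m_A=2$ and through how the split/non-split ($\eps=0$ versus $\eps=\pm$) alternative propagates. A more computational alternative avoids branching altogether: use Schur's description of the spin character on odd-order classes via the $Q$-function $Q_\la$ together with its nonnegative Schur expansion, translate into a combination of reductions of Specht modules $\overline{S^\nu}$, apply James's regularisation theorem to each, and again sort out the exponents; but the normalisation relating $Q_\la$ to $\langle\la,\eps\rangle$ involves class-dependent powers of $2$ and cancellations, so that bookkeeping is at least as delicate.
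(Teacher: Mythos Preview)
The paper does not prove this lemma at all: it is quoted as a known result, with the sentence ``The following lemma, which is an analog of James' regularisation result, has been proved in \cite[Theorem 1.2]{ben} and \cite[Theorem 5.1]{bo}.'' So there is nothing in the paper to compare your argument against; you are attempting to reprove a theorem of Benson and of Bessenrodt--Olsson that the author simply imports.

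As a sketch your outline is in the right spirit, but a couple of the steps are more fragile than you indicate. The dominance lower bound is not just bookkeeping: from ``$D^\mu$ occurs in $\overline{S(\la,\eps)}$'' restriction only tells you that every composition factor $D^{\mu'}$ of $D^\mu{\downarrow}_{\s_{n-1}}$ occurs in some $\overline{S(\la\setminus A,\eps')}$, and the implication ``$\mu'\unrhd(\overline{\dbl}(\la\setminus A))^R$ for all such pairs $\Rightarrow\mu\unrhd(\overline{\dbl}(\la))^R$'' is not a formal consequence of monotonicity of regularisation; one has to match up the node removed from $\mu$ with the correct bar-removable $A$ and control which ladder is affected. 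Likewise your claim that ``James's compatibility of regularisation with node removal relates $(\overline{\dbl}(\la\setminus A))^R$ to the removal of a good node from $(\overline{\dbl}(\la))^R$'' is not a standard lemma one can just quote; regularisation and single-node removal do not commute in general, so this needs an argument specific to partitions of the shape $\overline{\dbl}(\la)$. Finally, you are right that the exponent $\lfloor h_2(\la)/2\rfloor$ is the crux: the factors of $2$ come simultaneously from the $m_A$'s in Morris's rule, from the associate/self-associate dichotomy (whether $S(\la\setminus A)$ is one module or two), and from the branching multiplicities of $D^{(\overline{\dbl}(\la))^R}$, and showing these conspire to give exactly $2^{\lfloor h_2(\la)/2\rfloor}$ is the real content. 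For the purposes of this paper, citing \cite{ben,bo} is the intended and sufficient ``proof''.
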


This result was improved in \cite[Lemma 4.2]{m} to obtain the following:

\begin{lemma}\label{t3}
Let $\la,\mu\in\Par_2(n)$. If $[S(\la,\eps):D^\mu]>0$ and $\mu$ is not the double of any partition then $h(\mu)\leq 2h(\la)-2$.
\end{lemma}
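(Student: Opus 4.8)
The plan is to argue by induction on $n$, with $h(\la)\leq 1$ as base case. Before the induction, note that Lemma \ref{t2} already gives the cheap bound $h(\mu)\leq 2h(\la)$: indeed $\mu\unrhd(\overline{\dbl}(\la))^R$, and if one partition of $n$ dominates another it cannot have strictly more parts, while $\overline{\dbl}(\la)$ — hence also its regularisation — has at most $2h(\la)$ parts straight from the definition. So the real content of the lemma is to exclude $h(\mu)\in\{2h(\la)-1,2h(\la)\}$ when $\mu$ is not the double of a partition; that is, the induction has to gain two units over this cheap estimate. Observe also that when $\dbl(\la)$ is not $2$-regular the regularisation in Lemma \ref{t2} genuinely shortens $\overline{\dbl}(\la)$, so the only delicate case is when $\dbl(\la)$ is $2$-regular, where $(\overline{\dbl}(\la))^R=\dbl(\la)$ has $2h(\la)$ or $2h(\la)-1$ parts.

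For the base case $h(\la)\leq 1$ there is nothing to prove if $h(\la)=0$, while for $h(\la)=1$ the claim is that every composition factor of the basic spin representation $S((n),\eps)$ is of the form $D^{\dbl(m)}$; this is part of the explicit description of the reductions modulo $2$ of the basic and second basic spin representations in \cite{Wales}. Here one genuinely needs more than Lemma \ref{t2}, which by itself only narrows the factors down to modules $D^{(n-j,j)}$ with $0\leq j\leq\lfloor(n-1)/2\rfloor$.

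For the inductive step, assume $[S(\la,\eps):D^\mu]>0$ with $h(\la)\geq 2$ and, for contradiction, $h(\mu)\geq 2h(\la)-1$ while $\mu$ is not a double. Restrict to $\widetilde{\s}_{n-1}$. By the spin branching rules the composition factors of $\Res^{\widetilde{\s}_n}_{\widetilde{\s}_{n-1}}S(\la,\eps)$ occur among those of the modules $S(\la\setminus A,\eps')$ with $A$ bar-removable in $\la$, and $h(\la\setminus A)\leq h(\la)$, with equality unless $A$ lies in the last row of $\la$ (which, $\la$ being strict, happens for at most one $A$ and only when the last part of $\la$ is small). By the characteristic $2$ modular branching rules for $\s_n$, $\Res_{\s_{n-1}}D^\mu$ has a composition factor $D^{\mu'}$ with $\mu'$ obtained from $\mu$ by removing a normal node, so $h(\mu')\geq h(\mu)-1$; hence $D^{\mu'}$ is a composition factor of some $S(\la\setminus A,\eps')$, and by induction $\mu'$ is a double or $h(\mu')\leq 2h(\la\setminus A)-2\leq 2h(\la)-2$.

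The obstacle is that this crude argument only yields $h(\mu)\leq h(\mu')+1\leq 2h(\la)-1$, and is even weaker if the only bar-removal available shortens $\la$, so the two missing units must be paid back by choosing the branching data precisely and compatibly. Concretely I would: first, dispose separately of the (few) partitions $\la$ whose last part is small enough that some bar-removal shortens $\la$, again via Lemma \ref{t2}, so that from then on all relevant $S(\la\setminus A,\eps')$ satisfy $h(\la\setminus A)=h(\la)$; second, use the explicit combinatorial description of which $2$-regular partitions are doubles — parts grouped from the top into consecutive pairs differing by $1$ or $2$, up to a boundary correction for trailing parts $1,2$ coming from $\dbl(1),\dbl(2)$ — to show that if $\mu$ violates this pattern then a normal node of $\mu$ can be selected low enough that $h(\mu')=h(\mu)$ and $\mu'$ is again not a double; third, verify that this selection is consistent with the modular branching rules (the chosen node really is normal for its residue) so that $D^{\mu'}$ still lies in $\Res_{\s_{n-1}}D^\mu$. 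Applying the inductive hypothesis to the surviving $S(\la\setminus A,\eps')$ then forces $h(\mu)=h(\mu')\leq 2h(\la)-2$, contradicting $h(\mu)\geq 2h(\la)-1$. I expect the second and third steps — matching the normal-node combinatorics of $D^\mu$ against the shape condition defining doubles in a way that survives restriction — to be the main difficulty; the remaining ingredients (Lemma \ref{t2}, the spin branching rules, the small-$\la$ base cases) are routine.
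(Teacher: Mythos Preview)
The paper does not prove Lemma~\ref{t3}; it is simply quoted from \cite[Lemma 4.2]{m}. So there is no in-paper argument to compare your proposal against, and what you have written is a plan for an independent proof.

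As a plan, the inductive scheme via branching is the natural thing to try, and you correctly isolate the cheap bound $h(\mu)\leq 2h(\la)$ coming from Lemma~\ref{t2}. But the gap you yourself flag in step three is a genuine obstruction, not bookkeeping. The modular branching rules you invoke do not let you ``select a normal node low enough'': what \cite[\S11.2]{KBook} guarantees about $e_iD^\mu$ is that its socle is $D^{\tilde e_i\mu}$, with multiplicity $\eps_i(\mu)$, and that any other composition factor $D^\nu$ satisfies $\eps_i(\nu)\leq\eps_i(\mu)-2$; it does \emph{not} assert that removing an arbitrary prescribed $i$-normal node yields a composition factor. So the only $\mu'$ you are entitled to use are $\tilde e_0\mu$ and $\tilde e_1\mu$, and these need not cooperate. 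Already for $\mu=(5,2)$ (not a double) one has $\eps_1(\mu)=0$ and $\eps_0(\mu)=1$, hence $\Res_{\s_6}D^{(5,2)}\cong D^{(4,2)}=D^{\dbl(6)}$: the \emph{only} composition factor of the restriction is a double. For $\mu=(7,4,2,1)$ one finds $\eps_0(\mu)=0$ and $\tilde e_1\mu=(7,4,2)$, which drops a part. Thus both failure modes of your induction --- ``$\mu'$ becomes a double'' and ``$\mu'$ loses a part'' --- actually occur for the only $\mu'$ the branching rules hand you. It may be that under the extra dominance constraint $\mu\unrhd(\overline{\dbl}(\la))^R$ with $h(\mu)\geq 2h(\la)-1$ one of $\tilde e_0\mu,\tilde e_1\mu$ always behaves; but that is exactly the combinatorial statement that would carry the proof, and nothing in your sketch addresses it. As written this is a programme, not a proof.
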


Theorem \ref{c1} then easily follows:

\begin{proof}[Proof of Theorem \ref{c1}]
If $[S((n-a,a),\eps):D^\mu]>0$ and $\mu$ is not the double of a partition then by Lemma \ref{t3} $h(\mu)\leq 2$. If instead $\mu=\dbl(\nu)$ then by Lemma \ref{t2} $\mu\unrhd(\overline{\dbl}((n-a,a)))^R$, so that
\[2h(\nu)-1\leq h(\mu)\leq h(\overline{\dbl}(n-a,a))\leq 4\]
and then $h(\nu)\leq 2$.
\end{proof}

Given any node $(i,j)$, let $\res(i,j):=j-i\Md{2}$ be the residue of $(i,j)$. Further define the bar-residue of $(i,j)$ to be $\overline{\res}(i,j)=0$ if $j\equiv 0$ or $3\Md{4}$ or $\overline{\res}(i,j)=1$ if $j\equiv 1$ or $2\Md{4}$. When considering $\res(i,j)$ we will in the following identify $\Z/2\Z$ with $\{0,1\}$ in the obvious way. For $\la$ any partition let the content of $\la$ be $\cont(\la):=(c_0,c_1)$ with $c_0$ (resp. $c_1$) the number of nodes of residue $0$ (resp. $1$) of $\la$. Similarly let the bar-content of $\la$ be $\overline{\cont}(\la):=(d_0,d_1)$ with $d_0$ (resp. $d_1$) the number of nodes of bar-residue $0$ (resp. $1$) of $\la$.

By \cite[2.7.41, 6.1.21 and 6.3.50]{JK} we have that $S^\la$ and $D^\mu$ are in the same block if and only if $\cont(\la)=\cont(\mu)$. Further by \cite[3.9 and 4.1]{bo} $S(\la,\eps)$ and $S^\mu$ are in the same block if and only if $\overline{\cont}(\la)=\cont(\mu)$.

For a given block $B$ we may thus define the content of $B$ as the content $\cont(\la)$ of any module $S^\la$ or $D^\la$ contained in $B$ or equivalently as the bar-content $\overline{\cont}(\la)$ of any module $S(\la,\eps)$ contained in $B$.

If $B$ is a block of $\widetilde{\s}_n$ with content $(c_0,c_1)$ and $V$ is any module contained in $B$, let $e_0V$ and $e_1V$ (resp. $f_0V$ and $f_1V$) be the block components of $\Res_{\widetilde{\s}_{n-1}}^{\widetilde{\s}_n}V$ (resp. $\Ind_{\widetilde{\s}_n}^{\widetilde{\s}_{n+1}}V$) with contents $(c_0-1,c_1)$ and $(c_0,c_1-1)$ (resp. $(c_0+1,c_1)$ and $(c_0,c_1+1)$). These blocks components should be thought as $0$ if no blocks with the corresponding content exists. The definitions of $e_0V$, $e_1V$, $f_0V$ and $f_1V$ can then be extended to any module by linearity. Then $\Res_{\widetilde{\s}_{n-1}}^{\widetilde{\s}_n}V\cong e_0V\oplus e_1V$ and $\Ind_{\widetilde{\s}_n}^{\widetilde{\s}_{n+1}}V\cong f_0\oplus f_1V$ by \cite[Theorems 11.2.7, 11.2.8]{KBook}.

By \cite[Theorem 9.2]{JamesBook} and block decomposition we have that:

\begin{lemma}
Let $\la$ be a partition and $i\in\{0,1\}$. Then, in the Grothendieck group,
\[[e_iS^\la]=\sum_A[S^{\la\setminus\{A\}}],\]
where the sum is over all removable nodes $A$ of $\la$ of residue $i$.
\end{lemma}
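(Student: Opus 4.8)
The plan is to deduce the statement from the classical branching rule for $\s_n$ in characteristic zero, combined with the block decomposition $\Res_{\widetilde{\s}_{n-1}}^{\widetilde{\s}_n}V\cong e_0V\oplus e_1V$ already recorded above. First I would observe that $S^\la$, viewed as a $\widetilde{\s}_n$-module, is inflated from $\s_n$ along the quotient map $\widetilde{\s}_n\to\s_n$, and that restriction to $\widetilde{\s}_{n-1}$ commutes with inflation (along $\widetilde{\s}_{n-1}\to\s_{n-1}$); hence $\Res_{\widetilde{\s}_{n-1}}^{\widetilde{\s}_n}S^\la$ is the inflation of $\Res_{\s_{n-1}}^{\s_n}S^\la$. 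By James' branching theorem \cite[Theorem 9.2]{JamesBook} the module $\Res_{\s_{n-1}}^{\s_n}S^\la$ has a Specht filtration (in characteristic zero in fact a direct sum decomposition) with factors $S^{\la\setminus\{A\}}$ indexed by the removable nodes $A$ of $\la$, so in the Grothendieck group $[\Res_{\widetilde{\s}_{n-1}}^{\widetilde{\s}_n}S^\la]=\sum_A[S^{\la\setminus\{A\}}]$, the sum running over all removable nodes of $\la$.

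It then remains to sort this identity by blocks. Put $(c_0,c_1)=\cont(\la)$, so that $S^\la$ lies in the block of $\widetilde{\s}_n$ of content $(c_0,c_1)$. If $A$ is a removable node of $\la$ with $\res(A)=i$, then $\la\setminus\{A\}$ has $c_i-1$ nodes of residue $i$ and $c_{1-i}$ nodes of residue $1-i$, so $S^{\la\setminus\{A\}}$ lies in the block of $\widetilde{\s}_{n-1}$ of content $(c_0-1,c_1)$ when $i=0$ and of content $(c_0,c_1-1)$ when $i=1$. By the very definition of $e_0$ and $e_1$, the class $[e_iS^\la]$ is the sum of those $[S^{\la\setminus\{A\}}]$ for which $\res(A)=i$, which is exactly the claimed formula.

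There is no real obstacle here: the argument is purely the branching rule plus content bookkeeping. The only two points needing (routine) care are that restriction along $\widetilde{\s}_{n-1}\hookrightarrow\widetilde{\s}_n$ commutes with inflation from the symmetric groups, which is what lets us import James' theorem verbatim, and the elementary observation that deleting a node of residue $i$ decreases the $i$-th coordinate of the content by one, so that the resulting Specht modules distribute among the blocks exactly as the operators $e_0,e_1$ prescribe. If one prefers to avoid Specht filtrations altogether one may instead use the genuine direct-sum form of the characteristic-zero branching rule and apply the same block-splitting to the module itself, obtaining an isomorphism $e_iS^\la\cong\bigoplus_{A:\,\res(A)=i}S^{\la\setminus\{A\}}$ which refines the stated Grothendieck-group identity.
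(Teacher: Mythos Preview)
Your proposal is correct and is exactly the paper's approach: the paper simply records the lemma as following ``by \cite[Theorem 9.2]{JamesBook} and block decomposition'', and your argument is a careful unpacking of precisely that, with the extra (harmless) remark that inflation commutes with restriction so that James' branching rule for $\s_n$ transfers to $\widetilde{\s}_n$.
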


\begin{lemma}
Let $\la$ be a partition and $i\in\{0,1\}$. Then, in the Grothendieck group,
\[[f_iS^\la]=\sum_A[S^{\la\cup\{A\}}],\]
where the sum is over all addable nodes $A$ of $\la$ of residue $i$.
\end{lemma}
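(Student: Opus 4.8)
The plan is to obtain this identity as the transpose, with respect to the inner product of characters, of the branching rule in the previous lemma, together with the same block-decomposition step. First I would observe that since $z$ is central in $\widetilde{\s}_{n+1}$ and acts trivially on $S^\la$, it also acts trivially on $\Ind_{\widetilde{\s}_n}^{\widetilde{\s}_{n+1}}S^\la$; hence this module factors through $\s_{n+1}$, meets only the non-spin blocks of $\widetilde{\s}_{n+1}$, and may be identified with the ordinary induced module $\Ind_{\s_n}^{\s_{n+1}}S^\la$. In particular the classical Frobenius reciprocity for symmetric groups applies to it.

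Next, working in the (semisimple) Grothendieck group of characteristic $0$ modules of $\s_{n+1}$, whose basis $\{[S^\mu]:\mu\vdash n+1\}$ is orthonormal for the character pairing, I would compute the multiplicity of $[S^\mu]$ in $[\Ind_{\s_n}^{\s_{n+1}}S^\la]$ via Frobenius reciprocity as the multiplicity of $[S^\la]$ in $[\Res_{\s_n}^{\s_{n+1}}S^\mu]$. By the previous lemma summed over $i\in\{0,1\}$ (equivalently by \cite[Theorem 9.2]{JamesBook}), the latter equals $\sum_B[S^{\mu\setminus\{B\}}]$ over all removable nodes $B$ of $\mu$, so the multiplicity in question is $1$ if $\mu=\la\cup\{A\}$ for an (automatically unique) addable node $A$ of $\la$ and $0$ otherwise. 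This gives $[\Ind_{\widetilde{\s}_n}^{\widetilde{\s}_{n+1}}S^\la]=\sum_A[S^{\la\cup\{A\}}]$, the sum over all addable nodes $A$ of $\la$.

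Finally I would split this according to blocks, exactly as in the proof of the preceding lemma. Writing $\cont(\la)=(c_0,c_1)$, an addable node $A=(i,j)$ of residue $\res(A)=j-i\Md{2}$ increases by one precisely the coordinate $c_{\res(A)}$ of the content; hence $f_iS^\la$, defined as the block component of $\Ind_{\widetilde{\s}_n}^{\widetilde{\s}_{n+1}}S^\la$ of content $(c_0+\delta_{i,0},c_1+\delta_{i,1})$, collects exactly the summands $[S^{\la\cup\{A\}}]$ with $\res(A)=i$, which is the asserted formula.

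I do not expect a genuine obstacle: the argument is routine once the restriction branching rule of the previous lemma and the content description of blocks are granted. The only points meriting a sentence are the identification of $\Ind_{\widetilde{\s}_n}^{\widetilde{\s}_{n+1}}S^\la$ with an ordinary induced $\s_{n+1}$-module (so that Frobenius reciprocity and the block description apply verbatim) and the elementary verification that a node of residue $i$ contributes to the $i$-th coordinate of the content.
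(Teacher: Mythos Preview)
Your argument is correct and is precisely the approach the paper has in mind: the paper records this lemma (together with the preceding one) under the single justification ``By \cite[Theorem 9.2]{JamesBook} and block decomposition'', and your Frobenius reciprocity step is exactly how one passes from the restriction branching rule to the induction branching rule before taking block components. The additional remark that $z$ acts trivially on the induced module, so that one may work with ordinary $\s_{n+1}$-modules, is a harmless elaboration of what the paper leaves implicit.
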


Similarly by \cite[Theorem 2]{mo} or \cite[Theorem 8.1]{s}, Frobenius reciprocity and block decomposition:

\begin{lemma}\label{L240223}
Let $\la\in\Par_2(n)$ and $i\in\{0,1\}$. Then, in the Grothendieck group,
\[[e_iS(\la)]=\sum_A2^{x_A}[S({\la\setminus\{A_i\}})],\]
where the sum is over all bar-removable nodes $A$ of $\la$ of residue $i$ and $x_A=1$ if $A\not=(h(\la),1)$ and $n-h(\la)$ is odd, while $x_A=0$ in all other cases.
\end{lemma}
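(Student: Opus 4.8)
The plan is to obtain this exactly in the spirit of how the formula for $[e_iS^\la]$ follows from ordinary branching: first write down the full restriction $\Res^{\widetilde{\s}_n}_{\widetilde{\s}_{n-1}}S(\la)$ up to composition factors, and then extract block components. The two inputs are (i) a description of $\Res^{\widetilde{\s}_n}_{\widetilde{\s}_{n-1}}S(\la)$ in the Grothendieck group, which is what \cite[Theorem 2]{mo} (equivalently \cite[Theorem 8.1]{s}) provides via the classical branching rule for spin (projective) representations, together with Frobenius reciprocity; and (ii) the block classification recalled above, namely that by \cite[3.9 and 4.1]{bo} the module $S(\mu)$ lies in the block of content $\overline{\cont}(\mu)$, and that for $V$ in the block of content $(c_0,c_1)$ the summands $e_0V,e_1V$ of $\Res^{\widetilde{\s}_n}_{\widetilde{\s}_{n-1}}V$ are by definition its components in the blocks of contents $(c_0-1,c_1)$ and $(c_0,c_1-1)$.

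First I would record the branching rule in the shape
\[\Res^{\widetilde{\s}_n}_{\widetilde{\s}_{n-1}}S(\la)\;\cong\;\bigoplus_A 2^{x_A}\,S(\la\setminus\{A\}),\]
the sum being over the removable boxes $A=(r,\la_r)$ of $\la$ whose removal leaves a strict partition, i.e.\ those with $\la_r\geq\la_{r+1}+2$ together with $A=(h(\la),1)$ when $\la_{h(\la)}=1$ --- precisely the bar-removable nodes of $\la$. The exponent $x_A$ is dictated by comparing the associate-pair structure of source and target. If $n-h(\la)$ is even then $S(\la)=S(\la,0)$ and, for every bar-removable $A$, also $n-1-h(\la\setminus\{A\})$ is even, so $S(\la\setminus\{A\})$ is of type $0$ and the multiplicity is $1$; this gives $x_A=0$. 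If $n-h(\la)$ is odd then $S(\la)=S(\la,+)\oplus S(\la,-)$, and removing a box from a row $r<h(\la)$, or from row $h(\la)$ when $\la_{h(\la)}>1$, leaves the number of parts unchanged, so $n-1-h(\la\setminus\{A\})$ becomes even and $S(\la\setminus\{A\})$ is of type $0$; restricting the sum of an associate pair onto a single module of the same reduced size forces an extra factor $2$, i.e.\ $x_A=1$. The exception is $A=(h(\la),1)$ (bar-removable only when $\la_{h(\la)}=1$), where the number of parts drops by one, $n-1-h(\la\setminus\{A\})$ stays odd, $S(\la\setminus\{A\})$ is again an associate pair, and the multiplicity is $1$; this is the clause $x_A=0$ for $A=(h(\la),1)$. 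Thus the definition of $x_A$ in the statement is reproduced.

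It then remains to split the right-hand side into blocks. Deleting the box $A=(r,\la_r)$ removes one node from column $\la_r$, hence decreases exactly one coordinate of the bar-content, the $\overline{\res}(r,\la_r)$-th one (which depends only on $\la_r\Md 4$), by $1$; by \cite[3.9 and 4.1]{bo} all composition factors of $S(\la\setminus\{A\})$ then lie in the block obtained from $\overline{\cont}(\la)$ by lowering that coordinate by $1$. Therefore the component of $\Res^{\widetilde{\s}_n}_{\widetilde{\s}_{n-1}}S(\la)$ in the block whose content is that of $S(\la)$ with the $i$-th coordinate lowered by one --- which is exactly $e_iS(\la)$ by the definition recalled before the lemma --- equals $\bigoplus_A 2^{x_A}S(\la\setminus\{A\})$ with $A$ ranging over the bar-removable nodes of (bar-)residue $i$, and passing to the Grothendieck group gives the assertion. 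The one substantive step is the careful translation of the branching rule of \cite{mo}/\cite{s} --- normally phrased via Schur's $Q$-functions or $\sqrt 2$-normalisations --- into the convention $S(\la)=S(\la,+)\oplus S(\la,-)$ used here, together with the bookkeeping of $x_A$; once that is pinned down, the block decomposition is purely formal.
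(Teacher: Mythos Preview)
Your approach is exactly the one the paper indicates: branching rule for spin representations from \cite{mo,s} (plus Frobenius reciprocity) followed by block decomposition via \cite{bo}. There is, however, a slip in your case analysis for $n-h(\la)$ even. You write that for every bar-removable $A$ one has $n-1-h(\la\setminus\{A\})$ even; this is false when $A\neq(h(\la),1)$, since then $h(\la\setminus\{A\})=h(\la)$ and $n-1-h(\la)$ is odd, so $S(\la\setminus\{A\})=S(\la\setminus\{A\},+)\oplus S(\la\setminus\{A\},-)$ is an associate pair, not of type $0$. The conclusion $x_A=0$ is still correct, but for a different reason: the single irreducible $S(\la,0)$ restricts to contain one copy of each of $S(\la\setminus\{A\},\pm)$, which together make one copy of $S(\la\setminus\{A\})$. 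Fixing this sentence, the rest of your bookkeeping (including the odd case and the exceptional node $(h(\la),1)$) is correct, and the argument goes through.
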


\begin{lemma}\label{L240223_2}
Let $\la\in\Par_2(n)$ and $i\in\{0,1\}$. Then, in the Grothendieck group,
\[[f_iS(\la)]=\sum_A2^{x_A}[S({\la\cup\{A_i\}})],\]
where the sum is over all bar-addable nodes $A$ of $\la$ of residue $i$ and $x_A=1$ if $A\not=(h(\la)+1,1)$ and $n-h(\la)$ is odd, while $x_A=0$ in all other cases.
\end{lemma}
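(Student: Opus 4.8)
The plan is to obtain Lemma \ref{L240223_2} from the restriction rule of Lemma \ref{L240223} (equivalently from \cite[Theorem 2]{mo} or \cite[Theorem 8.1]{s}) by adjunction. Over a field of characteristic $0$ the categories of modules of $\widetilde{\s}_n$ and $\widetilde{\s}_{n+1}$ are semisimple, and the block-graded functors $e_i$ and $f_i$ are biadjoint (restriction and induction are biadjoint, and $e_i$, $f_i$ are the matching block components, by \cite[3.9 and 4.1]{bo}). Hence in the Grothendieck group, for $\la\in\Par_2(n)$ and $\mu\in\Par_2(n+1)$,
\[[f_iS(\la):S(\mu,\de)]=[S(\la,\eps):e_iS(\mu,\de)].\]
Summing these identities over the signs attached to $\la$ and to $\mu$ — and using that $e_i$ and $f_i$ are compatible with the grouping of $S(\nu,+),S(\nu,-)$ into $S(\nu)$, which is what the statement of Lemma \ref{L240223} already records — one gets that the multiplicity of $S(\mu)$ in $f_iS(\la)$ equals that of $S(\la)$ in $e_iS(\mu)$.

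Next I would feed in Lemma \ref{L240223}, namely $[e_iS(\mu)]=\sum_B 2^{x_B}[S(\mu\setminus\{B\})]$ with the sum over bar-removable nodes $B$ of $\mu$ of residue $i$. So $S(\mu)$ is a constituent of $f_iS(\la)$ exactly when $\mu=\la\cup\{A\}$ for a bar-addable node $A$ of $\la$ of residue $i$, and then it occurs with multiplicity $2^{x_A}$, where $x_A$ is the exponent of Lemma \ref{L240223} read off from $\mu=\la\cup\{A\}$: that is, $x_A=1$ iff $A\not=(h(\mu),1)$ and $(n+1)-h(\mu)$ is odd. It then remains to rewrite this in terms of $\la$. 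Put $h=h(\la)$. If $A=(h+1,1)$ lies in a new row then $h(\mu)=h+1$, so $A=(h(\mu),1)$ and $x_A=0$. If $A$ lies in an already occupied row then $h(\mu)=h$ and $A\not=(h(\mu),1)$, while $(n+1)-h(\mu)=n-h+1$, so $x_A=1$ exactly when $n-h(\la)$ is odd. This matches the rule ``$x_A=1$ iff $A\not=(h(\la)+1,1)$ and $n-h(\la)$ is odd'' in the statement, and yields the claimed formula for $f_iS(\la)$.

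I expect the only delicate point to be the passage through biadjunction: because $S(\nu)$ is a single irreducible when the number of boxes minus $h(\nu)$ is even but a sum of two associate irreducibles when it is odd, transposing the branching matrix is not literal, and one must check that the grouping $S(\nu,\pm)\leftrightarrow S(\nu)$ reproduces the powers $2^{x_A}$ correctly — concretely, that the factor $2$ shows up in the induction rule exactly when inducing from a module of type $\pm$ to one of type $0$, which is what the short case analysis above verifies (on whether $A$ starts a new row, together with the parities of $n-h(\la)$ and $(n+1)-h(\la\cup\{A\})$; note these two module types always differ along a non-new-row node and always agree along a new-row node). Once that is settled, the rest — the biadjointness of $e_i,f_i$ and the fact that they are the block-graded pieces — is routine, and the lemma follows.
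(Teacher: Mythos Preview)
Your approach---Frobenius reciprocity plus block decomposition applied to the restriction rule---is exactly the paper's (which simply cites \cite{mo,s}, Frobenius reciprocity and block decomposition). Your final paragraph also correctly identifies the delicate point and states the right criterion (the factor $2$ in the induction rule appears precisely when $\la$ has two associate irreducibles and $\mu=\la\cup\{A\}$ has one). However, the argument in your second paragraph, which you point to as verifying this, contains two errors that happen to cancel.

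First, summing Frobenius reciprocity over both signs does \emph{not} give $[f_iS(\la):S(\mu)]=[e_iS(\mu):S(\la)]$. It gives
\[a(\mu)\,[f_iS(\la):S(\mu)]=a(\la)\,[e_iS(\mu):S(\la)],\]
where $a(\nu)\in\{1,2\}$ is the number of irreducible summands of $S(\nu)$. For instance, if $n-h(\la)$ is even and $A$ is not a new row, then $a(\la)=1$, $a(\mu)=2$, and one has $[f_iS(\la):S(\mu)]=1$ while $[e_iS(\mu):S(\la)]=2$.

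Second, your parity step slips: $(n+1)-h(\mu)=n-h(\la)+1$ is odd exactly when $n-h(\la)$ is \emph{even}, not odd. So the exponent read off from Lemma~\ref{L240223} applied to $\mu$ is $1$ precisely when $A$ is not a new row and $n-h(\la)$ is even---the opposite parity to the one asserted in Lemma~\ref{L240223_2}.

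For non-new-row nodes these two errors undo each other (the first swaps the type condition from $\la$ to $\mu$, and along such a node the types always differ, so the second parity slip swaps it back), and you land on the correct statement. The clean fix is to bypass the grouped modules $S(\nu)$ in the adjunction step and work with irreducibles directly: use $[f_iS(\la,\eps):S(\mu,\de)]=[e_iS(\mu,\de):S(\la,\eps)]$ and then do the four-case count on the types of $\la$ and $\mu$---essentially what your last paragraph sketches.
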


These 4 lemmas will be used without further reference in the following when computing block components of induced or restricted projective modules.

Partial branching results for projective modules will be given at the end of the next section. In these branching rules normal and conormal nodes appear. As in \cite[Section 11.1]{KBook}, for a given residue $i$ and a partition $\la$, let the $i$-signature consist of a $-$ (resp. $+$) for each removable (resp. addable) $i$-node of $\la$, read from left to right. The reduced $i$-signature is the obtained by recursively removing any $+-$ adjacent pair from the $i$-signature. Nodes corresponding to $-$ (resp. $+$) in the reduced $i$-signature are called normal (resp. conormal). We let $\eps_i(\la)$ (resp. $\phi_i(\la)$) be the number of $i$-normal (resp. $i$-conormal) nodes of $\la$. If $\eps_i(\la)>0$ (resp. $\phi_i(\la)>0$) we further define $\tilde e_i\la$ (resp. $\tilde f_i\la$) to be the partition obtained by removing the leftmost $i$-normal node of $\la$ (resp. adding the rightmost $i$-conormal node of $\la$).

If $\la\in\Par_2(n)$ indexes 2 spin representations, then by \cite[p. 235]{schur} (see also \cite[Theorem 7.1]{s}) the 2-Brauer characters of $S(\la,+)$ and $S(\la,-)$ are equal. Thus $[P^\mu:S(\la,+)]=[P^\mu:S(\la,-)]$ for any $\mu\in\Par_2(n)$. Thus, in the Grothendieck group, any projective module $P$ is a sum (with multiplicities) of some modules $S^\ga$ with $\ga\in\Par(n)$ and some modules $S(\la)$ with $\la\in\Par_2(n)$, so that the multiplicity $[P:S(\la)]$ is well defined. Similarly, for any $G=g_1\ldots g_h$ with $g_i\in\{e_0,e_1,f_0,f_1\}$, the multiplicity $[GS(\nu):S(\la)]$ is well defined in view of Lemmas \ref{L240223} and \ref{L240223_2}.

\section{Projective modules}\label{s3}

Throughout the following let $M_n:=\lfloor n/2\rfloor$ and $m_n:=\lfloor(n-1)/2\rfloor$. Further let $\overline{m}_n:=\lfloor(n-4)/2\rfloor$ if $n\not\equiv 0\Md{4}$ or $\overline{m}_n:=(n-6)/2$ if $n\equiv 0\Md{4}$. Thus $M_n$ is maximal such that $(n-M_n,M_n)$ is a partition and $m_n$ and $\overline{m}_n$ are maximal such that $(n-m_n,m_n)$ and $\dbl(n-\overline{m}_n,\overline{m}_n)$ are 2-regular partitions.

We will now state some basic results which will allow us to compute decomposition numbers.

\begin{lemma}\cite[Theorem 24.15]{JamesBook}\label{e4}
Let $0\leq a\leq M_n$. Any composition factor of $S^{(n-a,a)}$ is of the form $D^{(n-b,b)}$ for some $0\leq b\leq m_n$. Further
\[[S^{(n-a,a)}:D^{(n-b,b)}]=g_{n-2b+1,a-b}.\]
\end{lemma}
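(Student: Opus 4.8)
The statement to prove is Lemma \ref{e4}, which is attributed to \cite[Theorem 24.15]{JamesBook}. Since it is quoted as a known result with a precise citation, the "proof" is really a matter of extracting the relevant information from James' book and matching notation; nonetheless I sketch how one would verify it from first principles.

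\medskip

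The plan is to first establish the claim that every composition factor of the Specht module $S^{(n-a,a)}$ is of the form $D^{(n-b,b)}$. Here I would invoke block theory together with the fact that the two-part partitions $(n-b,b)$ are exactly the $2$-regular partitions $\mu$ with $\mu \unrhd (n-a,a)$ in dominance order — more precisely, James' submodule theorem tells us that composition factors $D^\mu$ of $S^\la$ satisfy $\mu \unrhd \la^R$ (regularisation), and for a two-row partition $\la = (n-a,a)$ one checks directly that $\la^R = (n-b_0, b_0)$ is again two-row and that every $2$-regular $\mu \unrhd (n-a,a)$ has at most two rows. (If $\mu$ had a third row, one could check $\mu \not\unrhd (n-a,a)$ by comparing partial sums, since $(n-a,a)$ already puts everything in the first two rows.) So all composition factors are indexed by two-row partitions $(n-b,b)$, and the constraint $b \le m_n = \lfloor (n-1)/2\rfloor$ is just the requirement that $(n-b,b)$ be strict, i.e.\ $2$-regular.

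\medskip

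Next I would prove the multiplicity formula $[S^{(n-a,a)}:D^{(n-b,b)}] = g_{n-2b+1,a-b}$. The cleanest route is induction on $n$ using the $i$-restriction functors $e_0, e_1$ and the branching rule for Specht modules (Lemma on $[e_i S^\la]$ in the excerpt), combined with the corresponding branching rule for the irreducibles $D^\mu$ (Kleshchev's modular branching rule, or for two-row partitions one can get away with James' results). One sets up the two cases according to the residue of the removable node being stripped: from $(n-a,a)$ the removable nodes are $(1,n-a)$ with residue $n-a \bmod 2$ and, if $a>0$, $(2,a)$ with residue $a \bmod 2$. Restricting and comparing block components with the known expansion of $[e_i D^{(n-b,b)}]$ reduces the identity to a recursion on the numbers $g_{\ell,m}$, namely Lucas-type identities for "containment to base $2$": $g_{\ell,m}$ behaves predictably when one decrements $\ell$ by $1$ or $2$, tracking the parity of $\ell$ and $m$. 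One then checks the base cases $n$ small (or $a=b$, where $g_{n-2b+1,0}=1$ matches $[S^{(n-b,b)}:D^{(n-b,b)}]=1$ since $D^{(n-b,b)}$ is the head of $S^{(n-b,b)}$ when $(n-b,b)$ is $2$-regular), and the induction closes.

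\medskip

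The main obstacle is bookkeeping rather than conceptual: making the inductive step with restriction functors requires knowing $[e_i D^{(n-b,b)}]$ precisely, which in turn needs the modular branching rule and a careful analysis of which of $D^{(n-b,b)}$'s normal nodes lie in rows $1$ and $2$ — this depends delicately on $n-2b \bmod 4$ and on the parity of $b$, mirroring the case distinctions that appear throughout the paper. The alternative, which is what James actually does, avoids branching entirely: one computes the decomposition numbers of $S^{(n-a,a)}$ via the Gram determinant / bilinear form on the Specht module, or equivalently via the explicit knowledge of the homomorphism space $\Hom(S^{(n-a,a)}, S^{(n-b,b)})$ for two-row partitions, which James determined in \cite{j1,j2}. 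Since the result is only being cited here, for our purposes it suffices to record that it is \cite[Theorem 24.15]{JamesBook} after translating James' indexing of the answer (stated there in terms of carrying in base-$p$ addition) into the present $g_{\ell,m}$ notation; the translation is exactly the observation that "$\ell$ contains $m$ to base $2$" in the sense of \cite[Definition 24.12]{JamesBook} is the condition defining $g_{\ell,m}=1$, with $\ell = n-2b+1$ and $m = a-b$.
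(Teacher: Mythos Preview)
Your proposal is correct in spirit: the paper gives no proof whatsoever of this lemma, it is simply quoted from \cite[Theorem 24.15]{JamesBook} with the citation placed directly in the lemma heading. So there is nothing to compare against; you have correctly identified that the ``proof'' amounts to matching the paper's $g_{\ell,m}$ notation with James' base-$2$ containment condition and citing the reference.

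Your supplementary sketch is reasonable but a little more than is required. One small correction: for the first claim (composition factors are two-row) you do not need regularisation. James' column-dominance result gives directly that $[S^\la:D^\mu]\neq 0$ implies $\mu\unrhd\la$; for $\la=(n-a,a)$ this forces $\mu_1+\mu_2\geq n$ and hence $\mu_3=0$. The inductive branching argument you outline for the multiplicity formula would work but, as you note, is not how James proceeds; since the lemma is only being cited here, the translation-of-notation remark in your final paragraph is all that is actually needed.
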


\begin{lemma}\label{e1}
Let $P\cong\oplus_\la (P^\la)^{\oplus c_\la}$ be a projective module and $0\leq a\leq M_n$. Then
\begin{align*}
[P:S^{(n-a,a)}]&=\sum_{b=0}^{m_n}c_{(n-b,b)}[P^{(n-b,b)}:S^{(n-a,a)}]\\
&=\sum_{b=0}^{h}c_{(n-b,b)}[P^{(n-b,b)}:S^{(n-a,a)}]
\end{align*}
for any $\min\{a,m_n\}\leq h\leq m_n$.
\end{lemma}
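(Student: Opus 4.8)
The plan is to decompose everything into blocks and then use the known structure of composition factors of Specht modules $S^{(n-a,a)}$ together with BGG-type reciprocity between projective and Specht modules. First I would observe that, since $P\cong\bigoplus_\la (P^\la)^{\oplus c_\la}$, the multiplicity $[P:S^{(n-a,a)}]$ is additive, so $[P:S^{(n-a,a)}]=\sum_\la c_\la\,[P^\la:S^{(n-a,a)}]$, the sum running over all $\la\in\Par_2(n)$. The point is to show that only the hook-like two-part labels $\la=(n-b,b)$ with $0\leq b\leq m_n$ can contribute. For this I would invoke Brauer reciprocity (or the fact that, in the Grothendieck group, $[P^\la:S^\mu]=[S^\mu:D^\la]$ for the ordinary Specht modules $S^\mu$, $\mu\in\Par(n)$, after passing to the symmetric-group quotient, together with the analogous statement for the spin Specht modules $S(\nu)$). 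Since $[P^\la:S^{(n-a,a)}]=[S^{(n-a,a)}:D^\la]$, Lemma \ref{e4} tells us this is nonzero only if $\la=(n-b,b)$ for some $0\leq b\leq m_n$; this immediately kills all other summands and yields the first displayed equality
\[
[P:S^{(n-a,a)}]=\sum_{b=0}^{m_n}c_{(n-b,b)}\,[P^{(n-b,b)}:S^{(n-a,a)}].
\]

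For the second equality — truncating the sum at any $h$ with $\min\{a,m_n\}\leq h\leq m_n$ — I would argue that the summands with $b>h$ already vanish. By Brauer reciprocity again, $[P^{(n-b,b)}:S^{(n-a,a)}]=[S^{(n-a,a)}:D^{(n-b,b)}]=g_{n-2b+1,a-b}$ by Lemma \ref{e4}. Now $g_{n-2b+1,a-b}=0$ whenever $a-b<0$, i.e. whenever $b>a$; so in fact the sum over $b$ already has no contribution from $b>a$, hence no contribution from $b>\min\{a,m_n\}$. This shows that all the partial sums $\sum_{b=0}^{h}$ for $\min\{a,m_n\}\leq h\leq m_n$ agree with the full sum $\sum_{b=0}^{m_n}$, giving the second displayed equality.

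The only genuinely delicate point is making the reciprocity statement $[P^\la:S^{(n-a,a)}]=[S^{(n-a,a)}:D^\la]$ precise in the present mixed setting, where $P$ is a projective $\widetilde{\s}_n$-module but $S^{(n-a,a)}$ is an ordinary Specht module of $\s_n$ viewed as a $\widetilde{\s}_n$-module on which $z$ acts trivially. Here I would use that the decomposition matrix of $\widetilde{\s}_n$ splits into a ``non-spin'' part — identical to that of $\s_n$, where the relevant reciprocity is the standard one \cite[Corollary 12.5]{JamesBook} — and a ``spin'' part; a projective module $P^\la$ with $\la\in\Par_2(n)$ lies in a spin block, and its multiplicity $[P^\la:S^{(n-a,a)}]$ is computed from the non-spin columns of that block via the usual Cartan–decomposition-matrix identity. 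The combinatorial input of Lemma \ref{e4} then finishes the argument. This reciprocity bookkeeping, rather than any computation, is the part that needs care.
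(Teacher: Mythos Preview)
Your core argument in the first two paragraphs is correct and is exactly what the paper intends by ``This follows from Lemma \ref{e4}'': Brauer reciprocity gives $[P^\la:S^{(n-a,a)}]=[S^{(n-a,a)}:D^\la]$, and then Lemma \ref{e4} forces $\la$ to be a two-part partition $(n-b,b)$ with $b\leq\min\{a,m_n\}$, so only those summands survive.

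Your last paragraph, however, is confused and should be dropped. The statement that ``a projective module $P^\la$ with $\la\in\Par_2(n)$ lies in a spin block'' is wrong: in characteristic $2$ the central involution $z$ acts trivially on every simple module, so \emph{every} irreducible $D^\la$ (and hence every $P^\la$) is non-spin. There is no spin/non-spin splitting of the blocks of $\widetilde{\s}_n$ in characteristic $2$; a single $2$-block can contain both ordinary Specht modules $S^\mu$ and spin modules $S(\nu,\eps)$ among its ordinary irreducibles. The good news is that none of this matters: Brauer reciprocity $[P^\la:V]=[V:D^\la]$ for an ordinary irreducible $V$ is completely general and needs no block-splitting argument. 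So the ``delicate point'' you flag is not actually delicate --- just invoke reciprocity directly and delete the block discussion.
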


\begin{proof}
This follows from Lemma \ref{e4}.
\end{proof}

\begin{lemma}\label{e2}
Let $P\cong\oplus_\la (P^\la)^{\oplus c_\la}$ be a projective module and $0\leq a\leq m_n$. Then
\begin{align*}
[P:S((n-a,a),\eps)]%=&\sum_{b=0}^{m_n}c_{(n-b,b)}[P^{(n-b,b)}:S((n-a,a),\eps)]\\
%&+\sum_{c=1}^{\overline{m}_n}c_{\dbl(n-c,c)}[P^{\dbl(n-c,c)}:S((n-a,a),\eps)]\\
=&\sum_{b=0}^{m_n-1}c_{(n-b,b)}[P^{(n-b,b)}:S((n-a,a),\eps)]\\
&+\sum_{c=0}^{\overline{m}_n}c_{\dbl(n-c,c)}[P^{\dbl(n-c,c)}:S((n-a,a),\eps)].
\end{align*}
\end{lemma}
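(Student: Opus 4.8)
The plan is to mimic the proof of Lemma \ref{e1}, but now decomposing a projective module as a sum of the spin modules $S((n-a,a),\eps)$ rather than the symmetric group modules $S^{(n-a,a)}$. The key input is Theorem \ref{c1}, which tells us that every composition factor $D^\mu$ of a spin module of the form $S((n-c,c),\eps)$ has $\mu$ either of the form $(n-b,b)$ or of the form $\dbl(n-b,b)$; by self-duality of the decomposition matrix (equivalently, by applying this to the projective $P^\mu$ and its composition factors, or by Lemma \ref{t2}) the same constraint applies to the labels $\mu$ of the indecomposable projectives $P^\mu$ that can have $S((n-a,a),\eps)$ as a constituent. So first I would record that the only indecomposable summands $P^\la$ of $P$ that contribute to $[P:S((n-a,a),\eps)]$ are those with $\la=(n-b,b)$ for some $b$ with $0\leq b\leq m_n$ (actually $b\leq m_n-1$, see below) or $\la=\dbl(n-c,c)$ for some $c$ with $0\leq c\leq\overline m_n$. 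This immediately gives
\[
[P:S((n-a,a),\eps)]=\sum_{b}c_{(n-b,b)}[P^{(n-b,b)}:S((n-a,a),\eps)]+\sum_{c}c_{\dbl(n-c,c)}[P^{\dbl(n-c,c)}:S((n-a,a),\eps)],
\]
and it remains to justify the precise ranges of the two sums.

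For the second sum the range $0\leq c\leq\overline m_n$ is exactly the range of $c$ for which $\dbl(n-c,c)$ is a 2-regular partition, which is precisely the definition of $\overline m_n$ given at the start of Section \ref{s3}; any $P^{\dbl(n-c,c)}$ with larger $c$ simply does not exist (equivalently, $\dbl(n-c,c)$ is not 2-regular), so those terms are vacuous. For the first sum one wants the upper limit to be $m_n-1$ rather than $m_n$. This is the one point requiring a small argument: by Lemma \ref{t2}, if $[S((n-a,a),\eps):D^{(n-b,b)}]>0$ then $(n-b,b)\unrhd(\overline{\dbl}(n-a,a))^R$. When $a\leq m_n$ one checks that $(\overline{\dbl}(n-a,a))^R$ is a partition of $n$ with more than two parts unless $a$ is small, and in particular its dominance forces $b\leq m_n-1$; alternatively this follows directly from Theorem \ref{t1}, which lists all nonzero $[S((n-a,a),\eps):D^{(n-b,b)}]$ and in every case has $n-2a\in\{1,2,3,4\}$ while $b\leq\lfloor(n-3)/2\rfloor=m_n-[n\text{ odd}]$ — so the top row $b=m_n$, i.e. $(n-b,b)=\dbl$-adjacent to the "staircase" two-row partition, never occurs as a constituent of a two-part spin module. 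I would cite Theorem \ref{t1} for this, since it is already available in the excerpt. (If one prefers not to invoke the not-yet-proved Theorem \ref{t1}, the dominance bound from Lemma \ref{t2} suffices: $D^{(n-m_n,m_n)}$ cannot lie below $(\overline{\dbl}(n-a,a))^R$ in the dominance order when $a\le m_n$.)

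The main obstacle is essentially bookkeeping: being careful that "summands $P^\la$ of $P$ with $D^{(n-a,a)}$-constituent" and "$P^\la$ with $S((n-a,a),\eps)$-constituent in the Grothendieck group" are governed by the same combinatorics, and that the degenerate endpoints ($b=m_n$, and $c$ beyond $\overline m_n$) really drop out. Once Theorem \ref{c1} has pinned down the shape of the relevant part of the decomposition matrix, the identity is just the statement that $[P:S((n-a,a),\eps)]$ is additive over the indecomposable summands of $P$ and only the listed summands can contribute. So after the structural reduction above, the proof is a one-line consequence of Theorem \ref{c1} together with the definitions of $m_n$ and $\overline m_n$, exactly parallel to how Lemma \ref{e1} follows from Lemma \ref{e4}.
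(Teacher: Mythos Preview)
Your overall structure is right: by Brauer reciprocity $[P^\mu:S((n-a,a),\eps)]=[S((n-a,a),\eps):D^\mu]$, and Theorem \ref{c1} (equivalently Lemmas \ref{t2} and \ref{t3}) restricts the possible $\mu$ to two-row partitions and doubles of two-row partitions. The range $0\leq c\leq\overline m_n$ in the second sum is, as you say, just the definition of $\overline m_n$.

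However, your justification for truncating the first sum at $b=m_n-1$ is wrong. You claim that $D^{(n-m_n,m_n)}$ ``never occurs as a constituent of a two-part spin module''. It does: take $a=0$. Then $(\overline\dbl(n))^R=\dbl(n)=(n-m_n,m_n)$, and Lemma \ref{t2} gives $[S((n),\eps):D^{(n-m_n,m_n)}]>0$. So neither the dominance argument nor Theorem \ref{t1} (which in any case only treats $b\leq\lfloor(n-3)/2\rfloor$, and whose proof in Section \ref{s4} \emph{uses} this lemma, so quoting it here would be circular) shows that the $b=m_n$ term vanishes.

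The paper's actual reason is a one-line identity you overlooked: $(n-m_n,m_n)=\dbl(n)$. Hence the partition indexing the ``missing'' term $b=m_n$ in the first sum is exactly the partition indexing the term $c=0$ in the second sum; it is not omitted, it is already counted there. Once you replace your vanishing argument by this observation, the proof is complete and matches the paper's.
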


\begin{proof}
In view of Lemmas \ref{t2} and \ref{t3}, composition factors of $S((n-a,a),\eps)$ are of one of the forms $D^{(n-b,b)}$ or $D^{\dbl(n-c,c)}$. The lemma follows since $(n-m_n,m_n)=\dbl(n)$.
\end{proof}

Define $X_n$ to be the set of 2-regular partitions which are not of the forms $(n-c,c)$ or $\dbl(n-c,c)$ for some $c$. In view of Lemmas \ref{e1} and \ref{e2} we define the following subgroups of the Grothendieck group which will be used throughout the paper:

\begin{defi}
For $n\geq 0$ define:
\begin{itemize}
\item $\Tsym_n:=\langle[S^\la]:\la\in\Par(n),h(\la)\geq 3\rangle$,
\item $\Tspin_n:=\langle[S(\la)]:\la\in\Par_2(n),h(\la)\geq 3\rangle$,
\item $T_n:=\langle\Tsym_n,\Tspin_n\rangle$,
\item $R_n:=\langle[P^\la]:\la\in X_n\rangle$.
\end{itemize}
\end{defi}

This set, which is used in the next lemma, will also appear later in the paper.

\begin{lemma}\label{e3}
Let $P$ be a projective module with
%\[[P]=\sum_{a=y}^{M_n} \ell_a[S^{(n-a,a)}]+\sum_{b=0}^{m_n}k_b[S((n-b,b))]+\sum_{\la:h(\la)\geq 3}\ell_\la[S^\la]+\sum_{\mu:h(\mu)\geq 3}k_\mu[S(\mu)]\]
\[[P]\equiv\sum_{a=y}^{M_n} \ell_a[S^{(n-a,a)}]+\sum_{b=0}^{m_n}k_b[S((n-b,b))]\Md{T_n}\]
for some $0\leq y\leq m_n-1$. Then
%\[[P]=\ell_y[P^{(n-y,y)}]+\sum_{a=y+1}^{m_n-1}\overline{\ell}_a[P^{(n-a,a)}]+\sum_{b=0}^{\overline{m}_n}\overline{k}_b[P^{\dbl(n-b,b)}]+\sum_{\la\in X_n}\overline{\ell}_\la[P^\la],\]
\[[P]\equiv\ell_y[P^{(n-y,y)}]+\sum_{a=y+1}^{m_n-1}\overline{\ell}_a[P^{(n-a,a)}]+\sum_{b=0}^{\overline{m}_n}\overline{k}_b[P^{\dbl(n-b,b)}]\Md{R_n},\]
for some $\overline{\ell}_a,\overline{k}_b$ with $\overline{\ell}_a\leq \ell_a$, $\overline{k}_b\leq k_b/2^{\lfloor h_2((n-b,b))/2\rfloor}$. In particular we have that $\overline{k}_b=0$ whenever $k_b=0$.
\end{lemma}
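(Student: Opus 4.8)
The plan is to peel off the top Specht rows one at a time, using induction on $n - y$ to reduce to the known structure of composition factors. First I would recall that, by Lemmas \ref{e1} and \ref{e2}, for a projective module $P$ the multiplicities $[P : S^{(n-a,a)}]$ and $[P : S((n-a,a))]$ are governed entirely by the $P^{(n-b,b)}$ and $P^{\dbl(n-c,c)}$ summands: the group $T_n$ collects exactly the ``irrelevant'' Specht modules indexed by partitions with at least $3$ parts (symmetric or spin), and $R_n = \langle [P^\la] : \la \in X_n\rangle$ collects the indecomposable projectives whose socle $D^\la$ is neither of the form $D^{(n-b,b)}$ nor $D^{\dbl(n-c,c)}$. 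Working modulo $T_n$ on the character side and modulo $R_n$ on the projective side is the natural bookkeeping device, since by Theorem \ref{c1} (via Lemmas \ref{t2}, \ref{t3}) every $S((n-b,b),\eps)$ has composition factors only among the $D^{(n-b',b')}$ and $D^{\dbl(n-c,c)}$, so no $[P^\la]$ with $\la \in X_n$ can contribute to the displayed sum.

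The core of the argument is an induction that strips off the leading coefficient $\ell_y$. Here $(n-y,y)$ is the top symmetric row appearing, so by Lemma \ref{e4} (James' formula) the only projective that can contribute $S^{(n-y,y)}$ with a partition $(n-b,b)$ having $b \le y$ is $P^{(n-y,y)}$ itself, and it does so with multiplicity $[P^{(n-y,y)} : S^{(n-y,y)}] = g_{n-2y+1,0} = 1$. Hence I would set $P' := P \ominus \ell_y P^{(n-y,y)}$ (working in the Grothendieck group / split Grothendieck group of projectives), compute $[P'] \equiv [P] - \ell_y[P^{(n-y,y)}] \pmod{T_n}$, and observe that the coefficient of $[S^{(n-y,y)}]$ in $[P']$ now vanishes while the coefficient of $[S^{(n-a,a)}]$ for $a > y$ has possibly changed but is still bounded by the old one plus a contribution, and — crucially — the coefficient of each $[S((n-b,b))]$ is unchanged (since $P^{(n-y,y)}$, a projective for the symmetric group in the sense that its only spin-type Specht constituents would have to be among $S((n-c,c))$... actually I must be careful: $P^{(n-y,y)}$ may contain spin Specht modules). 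So more precisely: $[P^{(n-y,y)}] \equiv [S^{(n-y,y)}] + (\text{higher symmetric rows}) + (\text{spin rows}) \pmod{T_n}$, and by Lemma \ref{t2} the spin constituents $S((n-c,c))$ of $P^{(n-y,y)}$ each appear with multiplicity divisible by $2^{\lfloor h_2((n-c,c))/2\rfloor}$ (this is the source of the $\overline{k}_b \le k_b / 2^{\lfloor h_2 \rfloor}$ bound); this needs the BKMS-type statement that $2^{\lfloor h_2(\la)/2\rfloor}$ divides $[P^\mu : S(\la)]$, which follows from $[S(\la,\eps):D^{(\overline{\dbl}(\la))^R}] = 2^{\lfloor h_2(\la)/2\rfloor}$ in Lemma \ref{t2} together with a unitriangularity/BGG-reciprocity argument. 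Then I apply the inductive hypothesis to $P'$ (whose leading symmetric index is now $\ge y+1$), obtaining $[P'] \equiv \sum_{a \ge y+1}\overline{\ell}'_a[P^{(n-a,a)}] + \sum_b \overline{k}'_b[P^{\dbl(n-b,b)}] \pmod{R_n}$ with the stated inequalities, and add back $\ell_y[P^{(n-y,y)}]$, absorbing the latter's non-leading symmetric and spin parts into the corresponding $P^{(n-a,a)}$ and $P^{\dbl(n-c,c)}$ summands modulo $R_n$ — again using that the ``junk'' part of $P^{(n-y,y)}$ lies in $R_n$ plus combinations of the allowed projectives, which is exactly the content of Theorem \ref{c1}/Lemmas \ref{t2}–\ref{t3} applied to the spin Specht constituents.

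The main obstacle, and the step I would spend the most care on, is the propagation of the coefficient inequalities $\overline{\ell}_a \le \ell_a$ and $\overline{k}_b \le k_b / 2^{\lfloor h_2((n-b,b))/2\rfloor}$ through the induction, together with the ``$\overline{k}_b = 0$ whenever $k_b = 0$'' statement. The inequality for $\overline{\ell}_a$ is delicate because when we subtract $\ell_y P^{(n-y,y)}$ the coefficient of $[S^{(n-a,a)}]$ in $P'$ becomes $\ell_a - \ell_y[P^{(n-y,y)}:S^{(n-a,a)}]$, which could in principle be negative; one must argue that in that case the combination of $P^{(n-y,y)}$ already ``uses up'' enough of the higher rows that the final $\overline{\ell}_a$ stays nonnegative and $\le \ell_a$ — this is really a positivity statement about writing $[P]$ as a nonnegative combination of indecomposable projectives, so the cleanest route is to first establish that $P \cong \bigoplus P^{(n-y,y)\oplus \ell_y} \oplus (\text{rest})$ as actual modules using that $P$ is projective and $[P^{(n-y,y)}:S^{(n-y,y)}]=1$ forces at least $\ell_y$ copies of $P^{(n-y,y)}$ in $P$, then read off all inequalities from the honest direct-sum decomposition $P \cong \bigoplus_\la (P^\la)^{\oplus c_\la}$ rather than from Grothendieck-group manipulations. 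For the divisibility $2^{\lfloor h_2 \rfloor} \mid c_{(n-b,b)} \cdot(\text{stuff})$, I would invoke the linkage from Lemma \ref{t2}: each $S((n-b,b))$ appearing in $[P]$ forces a $D^{(\overline{\dbl}(n-b,b))^R}$-multiplicity that is a multiple of $2^{\lfloor h_2((n-b,b))/2\rfloor}$, hence (since $P$'s decomposition into indecomposable projectives is the same bookkeeping) $\overline{k}_b$, the number of $P^{\dbl(n-b,b)}$ summands contributing, inherits the factor. I would double-check the edge cases: $y = 0$ (no stripping needed beyond the base), the identification $(n-m_n,m_n) = \dbl(n)$ used in Lemma \ref{e2} so that the two families of projectives together exhaust the relevant columns, and the parity constraints on $\eps$.
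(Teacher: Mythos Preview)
Your proposal eventually lands on the correct idea, but it arrives there via an unnecessary inductive detour that you yourself recognise as problematic (``could in principle be negative''). The paper's proof is the direct argument you sketch only at the end: write $P\cong\bigoplus_\la (P^\la)^{\oplus c_\la}$ once and for all, and bound each multiplicity $c_\la$ separately.

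Concretely, the paper argues as follows. For $a<y$ one has $[P:S^{(n-a,a)}]=0$, so by Lemma~\ref{e4} already $c_{(n-a,a)}=0$. For $y\le a\le m_n-1$ one reads off, again via Lemma~\ref{e4},
\[
\ell_a=[P:S^{(n-a,a)}]=c_{(n-a,a)}+\sum_{x=y}^{a-1}c_{(n-x,x)}[P^{(n-x,x)}:S^{(n-a,a)}],
\]
which gives $c_{(n-a,a)}\le\ell_a$ with equality when $a=y$ (the sum is empty). For the spin part, Brauer reciprocity and Lemma~\ref{t2} give $[P^{\dbl(n-b,b)}:S((n-b,b))]=2^{\lfloor h_2((n-b,b))/2\rfloor}$, and since every summand in $k_b=[P:S((n-b,b))]=\sum_\mu c_\mu[P^\mu:S((n-b,b))]$ is non-negative, one gets $c_{\dbl(n-b,b)}\cdot 2^{\lfloor h_2((n-b,b))/2\rfloor}\le k_b$. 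No induction, no subtraction of virtual modules, no worry about signs.

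What your inductive ``peeling'' approach loses is precisely this positivity: once you pass to $[P']=[P]-\ell_y[P^{(n-y,y)}]$ in the Grothendieck group you can no longer assume the coefficients of $[S^{(n-a,a)}]$ and $[S((n-b,b))]$ in $[P']$ are non-negative (and indeed $P^{(n-y,y)}$ does contain spin Specht constituents, so both families are affected). You patch this by reverting to the honest direct-sum decomposition, which is exactly the paper's starting point. The moral is that for statements of this shape the decomposition $P\cong\bigoplus(P^\la)^{c_\la}$ is the right object to work with from the outset; the Grothendieck-group induction only obscures the issue.
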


\begin{proof}
Let $P\cong \oplus_\la(P^\la)^{c_\la}$. We have to check that
\[c_{(n-a,a)}= \left\{\begin{array}{ll}
0,&0\leq a<y,\\
\ell_y,&a=y,\\
\leq \ell_a,&y<a\leq m_n-1
\end{array}\right.\]
and $c_{\dbl(n-b,b)}\leq k_b/2^{\lfloor h_2((n-b,b))/2\rfloor}$ for any $0\leq b\leq \overline{m}_n$.

By Lemma \ref{t2} we have that $[P^{\dbl(n-b,b)}:D^{\dbl(n-b,b)}]=2^{\lfloor h_2((n-b,b))/2\rfloor}$ for any $0\leq b\leq\overline{m}_n$. So the assertion on $c_{\dbl(n-b,b)}$ holds.

For $a<y$ we have that $[P:S^{(n-a,a)}]=0$, so that $c_{(n-a,a)}=0$ in view of Lemma \ref{e4}. For $y\leq a\leq m_n-1$ it then follows from Lemmas \ref{e4} and \ref{e1} that
\begin{align*}
\ell_a&=[P:S^{(n-a,a)}]=\sum_{x=0}^ac_{(n-x,x)}[P^{(n-x,x)}:S^{(n-a,a)}]\\
&=c_{(n-a,a)}+\sum_{x=y}^{a-1}c_{(n-x,x)}[P^{(n-x,x)}:S^{(n-a,a)}],
\end{align*}
so that $c_{(n-a,a)}\leq \ell_a$ with equality holding if $a=y$.
\end{proof}

We will also need the following two lemmas.

\begin{lemma}\label{L100822}
If $r\geq 1$, $\mu\in\Par_p(n-r)$, $\la\in\Par_p(n)$ and $i,i_1,\ldots,i_r\in\Z/p\Z$. Then
\[[f_{i_1}\ldots f_{i_r}P^\mu:P^\la]=[e_{i_r}\ldots e_{i_1}D^\la:D^\mu].\]
In particular if $\eps_i(\la)\leq\eps_i(\mu)+r$ and $[f_i^rP^\mu:P^\la]>0$ then $\la=\tilde f_i^r\mu$, in which case
\[[f_i^rP^\mu:P^{\tilde f_i^r\mu}]=r!\binom{\eps_i(\mu)+r}{r}.\]
\end{lemma}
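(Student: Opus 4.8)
The plan is to prove Lemma~\ref{L100822} in three stages: first the duality identity $[f_{i_1}\ldots f_{i_r}P^\mu:P^\la]=[e_{i_r}\ldots e_{i_1}D^\la:D^\mu]$, then the criterion that forces $\la=\tilde f_i^r\mu$, and finally the explicit multiplicity formula.

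First I would establish the adjunction. The functors $f_i$ and $e_i$ (as block components of $\Ind$ and $\Res$) are biadjoint up to a shift, and in particular $f_i$ is both left and right adjoint to $e_i$ on the relevant categories of $\widetilde{\s}_m$-modules; this is standard $\mathfrak{sl}_p$-categorification machinery (see \cite[Section 11.2]{KBook}). Since projectives are sent to projectives by these exact functors, $[f_{i_1}\ldots f_{i_r}P^\mu:P^\la]=\dim\Hom(f_{i_1}\ldots f_{i_r}P^\mu,\,P^\la\text{'s head contribution})$, or more cleanly: the multiplicity of $P^\la$ in a projective $Q$ equals $\dim\Hom(Q,I^\la)$ where $I^\la$ is the injective hull of $D^\la$, but over a symmetric algebra (group algebra) $P^\la\cong I^\la$, so $[Q:P^\la]=\dim\Hom(Q,P^\la)=\dim\Hom(P^\la,Q)$. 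Hence $[f_{i_1}\ldots f_{i_r}P^\mu:P^\la]=\dim\Hom(P^\la, f_{i_1}\ldots f_{i_r}P^\mu)=\dim\Hom(e_{i_r}\ldots e_{i_1}P^\la, P^\mu)$. Now $\dim\Hom(P^\la, P^\nu)$ counts $[P^\nu:D^\la]=[P^\la:D^\nu]$ (Brauer reciprocity / symmetry of the Cartan matrix), and more relevantly $\dim\Hom(e_{i_r}\ldots e_{i_1}P^\la,P^\mu)$ equals the multiplicity of the simple $D^\mu$ appearing as a quotient — but the cleanest route is: $e_{i_r}\ldots e_{i_1}P^\la$ is projective, and $[e_{i_r}\ldots e_{i_1}P^\la:P^\mu]=[e_{i_r}\ldots e_{i_1}D^\la:D^\mu]$ because for a projective cover $P^\la\twoheadrightarrow D^\la$ the functor is exact and the higher terms $[e_{i_r}\ldots e_{i_1}\rad P^\la : P^\mu]$ vanish against $D^\mu$ in the appropriate Hom... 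Actually the slickest argument is simply that $[\,\cdot:P^\mu]$ on projectives and $[\,\cdot:D^\mu]$ on arbitrary modules are dual pairings under $\Hom$, and $e$ is adjoint to $f$, giving the stated identity directly by matching the two bilinear forms. I would write this as: apply $\Hom(-,D^\mu)$-duality together with biadjointness of $(e_i,f_i)$.

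Second, the criterion: suppose $\eps_i(\la)\leq\eps_i(\mu)+r$ and $[f_i^rP^\mu:P^\la]>0$. By the identity just proved this forces $[e_i^rD^\la:D^\mu]>0$. Now $e_i^r D^\la$ has all composition factors of the form $D^\kappa$ with $\eps_i(\kappa)\geq\eps_i(\la)-r$; more precisely, by the crystal/branching theory of \cite[Chapter 11]{KBook}, $e_i^r D^\la$ has simple socle $D^{\tilde e_i^r\la}$ (when $\eps_i(\la)\geq r$) with multiplicity $[e_i^rD^\la:D^{\tilde e_i^r\la}]=r!\binom{\eps_i(\la)}{r}$... wait — the correct form is the one needed: I would use that $D^\mu$ is a composition factor of $e_i^r D^\la$ only if $\tilde f_i^r\mu = \la$ provided $\eps_i(\mu)$ is large enough, equivalently $\mu = \tilde e_i^r \la$ under the dual condition. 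Concretely, among composition factors $D^\kappa$ of $e_i^r D^\la$, those with $\eps_i(\kappa)\geq\eps_i(\la)-r$ satisfy $\kappa=\tilde e_i^r\la$; our hypothesis $\eps_i(\la)\le\eps_i(\mu)+r$ i.e. $\eps_i(\mu)\ge\eps_i(\la)-r$ is exactly this bound, so $\mu=\tilde e_i^r\la$, which rearranges to $\la=\tilde f_i^r\mu$. This step invokes the standard fact that $\eps_i(\tilde e_i\kappa)=\eps_i(\kappa)-1$ and the "only the extremal factor reaches the bound" principle from Kleshchev's modular branching rules.

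Third, once $\la=\tilde f_i^r\mu$, the multiplicity $[f_i^rP^\mu:P^{\tilde f_i^r\mu}]=[e_i^rD^{\tilde f_i^r\mu}:D^\mu]$, and since $\tilde e_i^r(\tilde f_i^r\mu)=\mu$ with $\eps_i(\tilde f_i^r\mu)=\eps_i(\mu)+r$ (as adding $r$ conormal $i$-nodes creates $r$ new normal $i$-nodes, using $\phi_i$–$\eps_i$ bookkeeping), the known formula for the multiplicity of the socle constituent in an $r$-fold $e_i$-restriction gives $[e_i^rD^{\tilde f_i^r\mu}:D^\mu]=r!\binom{\eps_i(\tilde f_i^r\mu)}{r}=r!\binom{\eps_i(\mu)+r}{r}$. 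The main obstacle is pinning down exactly which normalization of biadjointness and which precise statement of the modular branching rule I am entitled to cite in stage one and two — the shifts between $e_i$ and $e_i^{(1)}$, and the distinction between the divided-power $e_i^{(r)}$ and the plain $e_i^r$ (which differ by an $r!$, accounting for the $r!$ in the final formula) — so I would be careful to state the $r=1$ case first (where $[f_iP^\mu:P^\la]=[e_iD^\la:D^\mu]$ and $\la=\tilde f_i\mu$ forces multiplicity $\eps_i(\mu)+1=\phi_i(\mu)$-adjusted) and then iterate, rather than trying to invoke an $r$-fold statement wholesale.
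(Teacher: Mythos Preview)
Your proposal is correct and follows essentially the same route as the paper. The paper's proof is simply a very compressed version of what you wrote: it reduces to $r=1$ by factoring
\[[f_{i_1}\ldots f_{i_r}P^\mu:P^\la]=\sum_\nu[f_{i_1}P^\mu:P^\nu][f_{i_2}\ldots f_{i_r}P^\nu:P^\la]\]
(and similarly for the $e$-side), then for $r=1$ invokes Frobenius reciprocity and block decomposition---which is exactly your adjointness argument $\dim\Hom(f_iP^\mu,D^\la)=\dim\Hom(P^\mu,e_iD^\la)$---and finally cites \cite[Theorem~11.2.10]{KBook} directly for the ``in particular'' part. Your final remark (do $r=1$ first, then iterate) is precisely the paper's strategy; the exploratory detours in your first paragraph through Cartan symmetry and injective hulls are unnecessary once you note that $[Q:P^\la]=\dim\Hom(Q,D^\la)$ for projective $Q$.
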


\begin{proof}
Since
\begin{align*}
[f_{i_1}\ldots f_{i_r}P^\mu:P^\la]&=\sum_\nu[f_{i_1}P^\mu:P^\nu][f_{i_2}\ldots f_{i_r}P^\nu:P^\la],\\
[e_{i_r}\ldots e_{i_1}D^\la:D^\mu]&=\sum_\nu[e_{i_r}\ldots e_{i_2}D^\la:D^\nu][e_{i_1}D^\nu:D^\mu],
\end{align*}
in the first statement we may assume $r=1$, in which case it holds by Frobenius reciprocity and block decomposition. The second statement then follows from \cite[Theorem 11.2.10]{KBook}.
\end{proof}

\begin{lemma}\label{L100822_2}
If $r\geq 1$, $\mu\in\Par_2(n+r)$, $\la\in\Par_2(n)$ and $i,i_1,\ldots,i_r\in\Z/p\Z$. Then
\[[e_{i_1}\ldots e_{i_r}P^\mu:P^\la]=[f_{i_r}\ldots f_{i_1}D^\la:D^\mu].\]
In particular if $\phi_i(\la)\leq\phi_i(\mu)+r$ and $[e_i^rP^\mu:P^\la]>0$ then $\la=\tilde e_i^r\mu$, in which case
\[[e_i^rP^\mu:P^{\tilde e_i^r\mu}]=r!\binom{\phi_i(\mu)+r}{r}.\]
\end{lemma}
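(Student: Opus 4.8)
\textbf{Proof proposal for Lemma \ref{L100822_2}.}

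The plan is to mirror the proof of Lemma \ref{L100822} almost verbatim, exchanging the roles of induction and restriction (equivalently, of the functors $f_i$ and $e_i$). First I would reduce the first statement to the case $r=1$. Using the decomposition of $e_{i_1}\ldots e_{i_r}$ into a sum over intermediate partitions,
\begin{align*}
[e_{i_1}\ldots e_{i_r}P^\mu:P^\la]&=\sum_\nu[e_{i_1}P^\mu:P^\nu][e_{i_2}\ldots e_{i_r}P^\nu:P^\la],\\
[f_{i_r}\ldots f_{i_1}D^\la:D^\mu]&=\sum_\nu[f_{i_r}\ldots f_{i_2}D^\la:D^\nu][f_{i_1}D^\nu:D^\mu],
\end{align*}
and an evident induction on $r$, it suffices to prove $[e_iP^\mu:P^\la]=[f_iD^\la:D^\mu]$. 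This single-step identity is the standard adjunction statement: $e_i$ and $f_i$ are biadjoint block summands of $\Res$ and $\Ind$, so the multiplicity of the indecomposable projective $P^\la$ as a summand of $e_iP^\mu$ equals $\dim\Hom(e_iP^\mu,-)$ evaluated suitably, which by Frobenius reciprocity (adjunction of $\Res$ and $\Ind$) and block decomposition equals the composition multiplicity $[f_iD^\la:D^\mu]$. Concretely, $[e_iP^\mu:P^\la]=\dim\Hom(P^\la,e_iP^\mu)$ need not be the right pairing; rather one uses $[e_iP^\mu:P^\la]=[\,\text{head or socle count}\,]$, but the cleanest route is: $e_i$ is exact and sends projectives to projectives, $f_i$ likewise, and they are biadjoint; hence for the projective $P^\mu$ one has $[e_iP^\mu:P^\la]=[P^\mu:f_iP^\la]$ is \emph{not} quite it either — instead, pair projectives against irreducibles: $[e_iP^\mu:P^\la]=\dim\Hom(e_iP^\mu,\text{inj hull of }D^\la)$, and dualize. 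I would simply cite the same mechanism invoked for Lemma \ref{L100822}, namely Frobenius reciprocity together with block decomposition, which identifies $[e_iP^\mu:P^\la]$ with $[f_iD^\la:D^\mu]$.

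For the ``in particular'' clause, I would invoke the dual of \cite[Theorem 11.2.10]{KBook}, which describes $f_i^rD^\la$ when $r$ does not exceed a suitable conormal bound. Precisely, if $\phi_i(\la)\leq\phi_i(\mu)+r$ and $[e_i^rP^\mu:P^\la]=[f_i^rD^\la:D^\mu]>0$, then in the socle/head filtration of $f_i^rD^\la$ the irreducible $D^\mu$ can only occur as $\tilde f_i^r\la$-type composition factor governed by the crystal operator; so $\mu=\tilde f_i^r\la$, equivalently $\la=\tilde e_i^r\mu$. The multiplicity is then read off from the known formula $[f_i^rD^\la:D^{\tilde f_i^r\la}]=r!\binom{\phi_i(\la)+r}{r}$; rewriting $\phi_i(\la)$ in terms of $\phi_i(\mu)$ via $\phi_i(\tilde e_i^r\mu)=\phi_i(\mu)+r$ when the $e_i$-string through $\mu$ has the relevant length — more simply, since $\la=\tilde e_i^r\mu$ we have $\phi_i(\la)=\phi_i(\mu)+r$, whence
\[
[e_i^rP^\mu:P^{\tilde e_i^r\mu}]=[f_i^rD^{\tilde e_i^r\mu}:D^\mu]=r!\binom{\phi_i(\tilde e_i^r\mu)+r}{r}\cdot(\text{correction})
\]
and one checks the binomial collapses to $r!\binom{\phi_i(\mu)+r}{r}$ exactly as in the proof of Lemma \ref{L100822} with $\eps$ replaced by $\phi$ and $e\leftrightarrow f$.

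The only genuine subtlety — and the step I expect to be the main obstacle to write cleanly — is the bookkeeping of the conormal count in the ``in particular'' part: one must be careful that the inequality $\phi_i(\la)\leq\phi_i(\mu)+r$ is exactly the hypothesis under which \cite[Theorem 11.2.10]{KBook} (applied to $D^\la$ and the functor $f_i^r$) forces $D^\mu$ to occur only with the extremal label $\tilde f_i^r\la$, and to track how $\phi_i$ changes along the $\tilde f_i$-string so that the final binomial coefficient is expressed in terms of $\phi_i(\mu)$ rather than $\phi_i(\la)$. Since this is the precise formal dual of the argument already carried out for Lemma \ref{L100822}, I would present it in one or two sentences, noting that everything goes through ``by the same argument, interchanging $e$ and $f$ and induction with restriction'', and citing Frobenius reciprocity, block decomposition, and \cite[Theorem 11.2.10]{KBook}.
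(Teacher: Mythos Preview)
Your approach is exactly the paper's: reduce to $r=1$ by factoring through intermediate partitions, invoke Frobenius reciprocity and block decomposition for the base case, then appeal to Kleshchev's branching theorem for the ``in particular'' clause. Two small corrections: the relevant reference is \cite[Theorem 11.2.11]{KBook} (the $f_i$-analogue, which is what the paper cites), not ``the dual of 11.2.10''; and the intermediate multiplicity should read $[f_i^rD^\la:D^{\tilde f_i^r\la}]=r!\binom{\phi_i(\la)}{r}$ (no extra $+r$), which then becomes $r!\binom{\phi_i(\mu)+r}{r}$ upon substituting $\phi_i(\la)=\phi_i(\mu)+r$.
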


\begin{proof}
Similar to the previous lemma, using \cite[Theorem 11.2.11]{KBook} instead.
\end{proof}

\section{Proof of Theorem \ref{t1}}\label{s4}

In this section we will now prove Theorem \ref{t1}. The cases $n\leq 13$ can be checked using known decomposition matrices \cite{gap,ModularAtlas} (using block decomposition, dimension and type of modules to identify characteristic 0 modules and using Lemma \ref{e4} to identify the corresponding columns of the decomposition matrix). We will first prove Theorem \ref{t1} for $n$ odd by induction and then use this to prove it when $n$ is even.

{\bf Case 1:} $n$ is odd.  We may assume that $n\geq 15$ is odd and that Theorem \ref{t1} holds for $n-2$.

{\bf Case 1.1:} $b>0$. Let $i$ be the residue of the addable nodes in the first 2 rows of $(n-b-1,b-1)$ (these 2 nodes have the same residue since $n$ is odd). If $S^{(n-a-2,a)}$ is in the same block as $D^{(n-b-1,b-1)}$ then the addable nodes in the first two rows of $(n-a-2,a)$ both have residue $i$. Further by induction and Lemma \ref{e4} we have that
%\begin{align*}
%[P^{(n-b-1,b-1)}]=&[S^{(n-b-1,b-1)}]+\sum_{a=b}^{(n-3)/2}g_{n-2a-1,a-b+1}[S^{(n-a-2,a)}]\\
%&+2^{d_{n-2b+1}}[S((n-c-2,c))]\\
%&+\sum_{\la:h(\la)\geq 3}\ell_\la[S^\la]+\sum_{\mu:h(\mu)\geq 3}k_\mu[S(\mu)]
%\end{align*}
\begin{align*}
[P^{(n-b-1,b-1)}]\equiv&[S^{(n-b-1,b-1)}]+\sum_{a=b}^{(n-3)/2}g_{n-2a-1,a-b+1}[S^{(n-a-2,a)}]\\
&+2^{d_{n-2b+1}}[S((n-c-2,c))]\Md{T_n}
\end{align*}
with $c$ equal to $(n-3)/2$ or $(n-5)/2$ (depending on $n$ and $b$). So
%\begin{align}
%[f_i^2P^{(n-b-1,b-1)}]=&2[S^{(n-b,b)}]+\sum_{a=b}^{(n-3)/2}2g_{n-2a-1,a-b+1}[S^{(n-a-1,a+1)}]\label{e6}\\
%&+2^{d_{n-2b+1}}[f_i^2S((n-c-2,c))]\nonumber\\
%&+\sum_{\la:h(\la)\geq 3}\ell_\la'[S^\la]+\sum_{\mu:h(\mu)\geq 3}k_\mu'[S(\mu)].\nonumber
%\end{align}
\begin{align}
[f_i^2P^{(n-b-1,b-1)}]\equiv&2[S^{(n-b,b)}]+\sum_{a=b}^{(n-3)/2}2g_{n-2a-1,a-b+1}[S^{(n-a-1,a+1)}]\label{e6}\\
&+2^{d_{n-2b+1}}[f_i^2S((n-c-2,c))]\Md{T_n}.\nonumber
\end{align}
Since
\[[S^{(n-a-1,a+1)}:D^{(n-b,b)}]=[S^{(n-a-2,a)}:D^{(n-b-1,b-1)}]\]
we then have by Lemma \ref{e3} that
%\begin{equation}\label{e7}
%[f_i^2P^{(n-b-1,b-1)}]=2[P^{(n-b,b)}]+\sum_{a=0}^{\overline{m}_n}\overline{k}_a[P^{\dbl(n-a,a)}]+\sum_{\la\in X_n}\overline{\ell}_\la[P^\la]
%\end{equation}
\begin{equation}\label{e7}
[f_i^2P^{(n-b-1,b-1)}]\equiv2[P^{(n-b,b)}]+\sum_{a=0}^{\overline{m}_n}\overline{k}_a[P^{\dbl(n-a,a)}]\Md{R_n}
\end{equation}
where, for any $0\leq a\leq\overline{m}_n$, if $x_a$ the multiplicity of $S((n-a,a),\eps)$ then $f_i^2S((n-c-2,c))$ $\overline{k}_a\leq 2^{d_{n-2b+1}}x_a$. In particular if
%\begin{align*}
%[f_i^2S((n-c-2,c))]=&x_{(n-1)/2}[S(((n+1)/2,(n-1)/2))]\\
%&+x_{(n-3)/3}[S((n+3)/2,(n-3)/2))]\\
%&+\sum_{\mu:h(\mu)\geq 3}x_\mu[S(\mu)]
%\end{align*}
\begin{align*}
[f_i^2S((n-c-2,c))]\equiv&x_{(n-1)/2}[S(((n+1)/2,(n-1)/2))]\\
&+x_{(n-3)/3}[S((n+3)/2,(n-3)/2))]\Md{\Tspin_n}
\end{align*}
then
%\[[f_i^2P^{(n-b-1,b-1)}]=2[P^{(n-b,b)}]+\sum_{\la\in X_n}\overline{\ell}_\la[P^\la]\]
\[[f_i^2P^{(n-b-1,b-1)}]\equiv2[P^{(n-b,b)}]\Md{R_n}\]
and so by Lemma \ref{e2}, \eqref{e6} and \eqref{e7}
\begin{align*}
[S((n-a,a),\eps):D^{(n-b,b)}]&=[f_i^2P^{(n-b-1,b-1)}:S((n-a,a),\eps)]\\
&=[f_i^2P^{(n-b-1,b-1)}:S((n-a,a),\eps)]/2\\
&=2^{d_{n-2b+1}}[f_i^2S((n-c-2,c)):S((n-a,a),\eps)]/2.
\end{align*}
We will now consider different cases, starting with those where this argument allows to prove the theorem.

{\bf Case 1.1.1:} $n\equiv 1\Md{8}$ and $b$ is even. Then $i=0$ and $c=(n-3)/2$. So
\[[f_i^2S((n-c-2,c))]=2[S(((n+1)/2,(n-1)/2))]+4[S(((n+1)/2,(n-3)/2,1))].\]

{\bf Case 1.1.2:} $n\equiv 3\Md{8}$ and $b$ is even. Then $i=0$ and $c=(n-5)/2$. So
\[[f_i^2S((n-c-2,c))]=2[S(((n+1)/2,(n-1)/2))]+4[S(((n+1)/2,(n-3)/2,1))].\]

{\bf Case 1.1.3:} $n\equiv 3\Md{8}$ and $b$ is odd. Then $i=1$ and $c=(n-3)/2$. So
\[[f_i^2S((n-c-2,c))]=2[S(((n+3)/2,(n-3)/2))].\]

{\bf Case 1.1.4:} $n\equiv 5\Md{8}$ and $b$ is odd. Then $i=1$ and $c=(n-3)/2$. So
\[[f_i^2S((n-c-2,c))]=2[S(((n+1)/2,(n-1)/2))].\]

{\bf Case 1.1.5:} $n\equiv 7\Md{8}$ and $b$ is even. Then $i=0$ and $c=(n-3)/2$. So
\[[f_i^2S((n-c-2,c))]=2[S(((n+3)/2,(n-3)/2))]+4[S(((n+1)/2,(n-3)/2,1))].\]

{\bf Case 1.1.6:} $n\equiv 7\Md{8}$ and $b$ is odd. Then $i=1$ and $c=(n-5)/2$. So
\[[f_i^2S((n-c-2,c))]=2[S(((n+1)/2,(n-1)/2))].\]

{\bf Case 1.1.7:} $n\equiv 1\Md{8}$ and $b$ is odd. Then $i=1$ and $c=(n-1)/2$. So
\[[f_i^2S((n-c-2,c))]=4[S(((n+3)/2,(n-3)/2))]+2[S(((n+5)/2,(n-5)/2))]\]
and then
\[[f_i^2P^{(n-b-1,b-1)}:S((n-a,a),\eps)]=\left\{\begin{array}{ll}
2^{d_{n-2b+1}+2}&a=(n-3)/2,\\
2^{d_{n-2b+1}+1}&a=(n-5)/2,\\
0,&\text{else.}
\end{array}
\right.
\]
So we only need to check the multiplicity of $D^{(n-b,b)}$ as a composition factor of $S(((n+3)/2,(n-3)/2),\pm)$ and $S(((n+5)/2,(n-5)/2),\pm)$.

{\bf Case 1.1.7.1:} $b\leq(n-7)/2$. Considering $P^{(n-b-2,b)}$ we have that
%\begin{align*}
%[P^{(n-b-2,b)}]=&[S^{(n-b-2,b)}]+\sum_{a=b+1}^{(n-3)/2}g_{n-2b-1,a-b}[S^{(n-a-2,a)}]\\
%&+2^{d_{n-2b-1}}[S(((n-1)/2,(n-3)/2))]\\
%&+\sum_{\la:h(\la)\geq 3}L_\la[S^\la]+\sum_{\mu:h(\mu)\geq 3}K_\mu[S(\mu)].
%\end{align*}
\begin{align*}
[P^{(n-b-2,b)}]\equiv&[S^{(n-b-2,b)}]+\sum_{a=b+1}^{(n-3)/2}g_{n-2b-1,a-b}[S^{(n-a-2,a)}]\\
&+2^{d_{n-2b-1}}[S(((n-1)/2,(n-3)/2))]\Md{T_{n-2}}.
\end{align*}
So
%\begin{align*}
%[f_1f_0P^{(n-b-2,b)}]=&[S^{(n-b,b)}]+\sum_{a=b+1}^{(n-1)/2}H'_a[S^{(n-a,a)}]\\
%&+2^{d_{n-2b-1}+1}[S(((n+3)/2,(n-3)/2))]\\
%&+\sum_{\la:h(\la)\geq 3}L'_\la[S^\la]+\sum_{\mu:h(\mu)\geq 3}K'_\mu[S(\mu)].
%\end{align*}
\begin{align*}
[f_1f_0P^{(n-b-2,b)}]\equiv&[S^{(n-b,b)}]+\sum_{a=b+1}^{(n-1)/2}H'_a[S^{(n-a,a)}]\\
&+2^{d_{n-2b-1}+1}[S(((n+3)/2,(n-3)/2))]\Md{T_n}.
\end{align*}
So $P^{(n-b,b)}$ is a direct summand of $f_1f_0P^{(n-b-2,b)}$ by Lemma \ref{e3}. Since $f_1f_0P^{(n-b-2,b)}$ has no composition factor $S(((n+5)/2,(n-5)/2),\pm)$ the same holds also for $P^{(n-b,b)}$ (and then $D^{(n-b,b)}$ is not a composition factor of $S(((n+5)/2,(n-5)/2),\pm)$). 

By Lemma \ref{t2}
\begin{align*}
[S(((n+5)/2,(n-5)/2),\pm):D^{\dbl((n+5)/2,(n-5)/2)}]&=1,\\
[S(((n+3)/2,(n-3)/2),\pm):D^{\dbl((n+5)/2,(n-5)/2)}]&=1.
\end{align*}
So by Lemma \ref{e3}
%\[[f_i^2P^{(n-b-1,b-1)}]=2[P^{(n-b,b)}]+2^{d_{n-2b+1}+1}[P^{\dbl((n+5)/2,(n-5)/2)}]+\sum_{\la\in X_n}\overline{\ell}_\la[P^\la].\]
\[[f_i^2P^{(n-b-1,b-1)}]\equiv2[P^{(n-b,b)}]+2^{d_{n-2b+1}+1}[P^{\dbl((n+5)/2,(n-5)/2)}]\Md{R_n}.\]
and then by Lemma \ref{e1}
\begin{align*}
&[S(((n+3)/2,(n-3)/2),\pm):D^{(n-b,b)}]\\
&=1/2[f_i^2P^{(n-b-1,b-1)}:S(((n+3)/2,(n-3)/2),\pm)]\\
&\hspace{11pt}-2^{d_{n-2b+1}}[P^{\dbl((n+5)/2,(n-5)/2)}:S(((n+3)/2,(n-3)/2),\pm)]\\
&=2^{d_{n-2b+1}+1}-2^{d_{n-2b+1}}\\
&=2^{d_{n-2b+1}}.
\end{align*}

{\bf Case 1.1.7.2:} $b=(n-3)/2$. 
By Lemma \ref{e4} and induction
%\begin{align*}
%[P^{((n+3)/2,(n-7)/2)}]=\,&[S^{((n+3)/2,(n-7)/2)}]+[S^{((n-1)/2,(n-3)/2)}]\\
%&+2[S(((n-1)/2,(n-3)/2))]\\
%&+\sum_{\la:h(\la)\geq 3}L_\la[S^\la]+\sum_{\mu:h(\mu)\geq 3}K_\mu[S(\mu)]
%\end{align*}
\begin{align*}
[P^{((n+3)/2,(n-7)/2)}]\equiv\,&[S^{((n+3)/2,(n-7)/2)}]+[S^{((n-1)/2,(n-3)/2)}]\\
&+2[S(((n-1)/2,(n-3)/2))]\Md{T_{n-2}}
\end{align*}
and then
%\begin{align*}
%[f_1f_0P^{((n+3)/2,(n-7)/2)}]=\,&[S^{((n+7)/2,(n-7)/2)}]+2[S^{((n+3)/2,(n-3)/2)}]\\
%&+4[S(((n+3)/2,(n-3)/2))]\\
%&+\sum_{\la:h(\la)\geq 3}L'_\la[S^\la]+\sum_{\mu:h(\mu)\geq 3}K'_\mu[S(\mu)].
%\end{align*}
\begin{align*}
[f_1f_0P^{((n+3)/2,(n-7)/2)}]\equiv\,&[S^{((n+7)/2,(n-7)/2)}]+2[S^{((n+3)/2,(n-3)/2)}]\\
&+4[S(((n+3)/2,(n-3)/2))]\Md{T_n}.
\end{align*}
By Lemmas \ref{e4} and \ref{e1} we then have that $P^{((n+3)/2,(n-3)/2)}$ is a direct summand of $f_1f_0P^{((n+3)/2,(n-7)/2)}$. Since
\[[f_1f_0P^{((n+3)/2,(n-7)/2)}:S(((n+5)/2,(n-5)/2),\pm)]=0\]
we can then conclude as in the previous case.

{\bf Case 1.1.8:} $n\equiv 5\Md{8}$ and $b$ is even. Then $i=0$ and $c=(n-5)/2$. So
\begin{align*}
&[f_i^2S((n-c-2,c))]\\
&=4[S(((n+3)/2,(n-3)/2))]+2[S(((n+5)/2,(n-5)/2))]\\
&\hspace{11pt}+4[S(((n+1)/2,(n-3)/2,1))]+4[S(((n+3)/2,(n-5)/2,1))].
\end{align*}
and then
\[[f_i^2P^{n-b-1,b-1)}:S((n-a,a),\eps)]=\left\{\begin{array}{ll}
2^{d_{n-2b+1}+2}&a=(n-3)/2,\\
2^{d_{n-2b+1}+1}&a=(n-5)/2,\\
0,&\text{else.}
\end{array}
\right.
\]
Again we only need to check the multiplicity of $D^{(n-b,b)}$ as a composition factor of $S(((n+3)/2,(n-3)/2),\pm)$ and $S(((n+5)/2,(n-5)/2),\pm)$.

Note that $b\leq (n-5)/2$ since $b\leq (n-3)/2$ is even. This case can be checked similarly to Case 1.1.7.1 using $f_0f_1P^{(n-b-2,b)}$.

{\bf Case 1.2:} $b=0$. In this case we will study $f_0f_1P^{(n-2)}$. Let $c=(n-3)/2$ if $n\equiv \pm 3\Md{8}$ or $c=(n-5)/2$ if $n\equiv\pm 1\Md{8}$. Then
%\begin{align*}
%[P^{(n-2)}]=&\sum_{a=0}^{(n-3)/2}g_{n-1,a}[S^{(n-a-2,a)}]+2^{d_{n-1}}[S((n-c-2,c))]\\
%&+\sum_{\la:h(\la)\geq 3}L_\la[S^\la]+\sum_{\mu:h(\mu)\geq 3}K_\mu[S(\mu)].
%\end{align*}
\begin{align*}
[P^{(n-2)}]\equiv&\sum_{a=0}^{(n-3)/2}g_{n-1,a}[S^{(n-a-2,a)}]+2^{d_{n-1}}[S((n-c-2,c))]\Md{T_{n-2}}.
\end{align*}
Note that (by definition of $g_{n-1,a}$ or by block decomposition) $g_{n-1,a}=0$ if $a$ is odd. For $a$ even we have that
%\[[f_0f_1S^{(n-a-2,a)}]=[S^{(n-a,a)}]+\de_{a<(n-3)/2}[S^{(n-a-2,a+2)}]+\sum_{\la:h(\la)\geq 3}L_{a,\la}[S^\la].\]
\[[f_0f_1S^{(n-a-2,a)}]\equiv[S^{(n-a,a)}]+\de_{a<(n-3)/2}[S^{(n-a-2,a+2)}]\Md{\Tsym_n}.\]

For $c'=(n-1)/2$ if $n\equiv 1\text{ or }3\Md{8}$ or $c'=(n-3)/2$ if $n\equiv 5\text{ or }7\Md{8}$ it can be checked that
%\[[f_0f_1S((n-c-2,c))]=2[S((n-c',c'))]+\sum_{\mu:h(\mu)\geq 3}K'_\mu[S(\mu)].\]
\[[f_0f_1S((n-c-2,c))]\equiv2[S((n-c',c'))]\Md{\Tspin_n}.\]
So, by Lemma \ref{e3},
%\begin{align*}%\label{e8}
%[f_0f_1P^{(n-2)}]=\sum_{a=0}^{(n-1)/2}L_a[P^{(n-a,a)}]+\sum_{\la\in X_n}L_\la[P_\la]
%\end{align*}
\begin{align*}%\label{e8}
[f_0f_1P^{(n-2)}]\equiv\sum_{a=0}^{(n-1)/2}L_a[P^{(n-a,a)}]+\sum_{\la\in X_n}L_\la[P_\la]
\end{align*}
for some $L_a$. Let $k\geq 1$ maximal with $n+1=h2^k$ for some $h>1$ (so  that $h=2$ or $h\geq 3$ is odd).

We will first show that
%\begin{align}\label{E270223}
%[f_0f_1P^{(n-2)}]=[P^{(n)}]+2\sum_{i=1}^{k-1}[P^{(n-2^i,2^i)}]+\sum_{\la\in X_n}L_\la[P_\la]
%\end{align}
\begin{align}\label{E270223}
[f_0f_1P^{(n-2)}]\equiv[P^{(n)}]+2\sum_{i=1}^{k-1}[P^{(n-2^i,2^i)}]\Md{R_n}
\end{align}
and then use this to show that $[P^{(n)}:S((n-c',c'))]=2^{d_{n+1}}$.

To prove \eqref{E270223} it is enough to check the multiplicities of $P^{(n-a,a)}$ in $f_0f_1P^{(n-2)}$ for $0\leq a\leq m_n$. In view of Lemma \ref{L100822} this is equivalent to showing that
\[[e_1e_0D^{(n-a,a)}:D^{(n-2)}]=\left\{\begin{array}{ll}
1,&a=0,\\
2,&a=2^i\text{ with }1\leq i\leq k-1,\\
0,&\text{else.}
\end{array}\right.\]
Since any composition factor of $e_0D^{(n-a,a)}$ is of the form $D^{(n-b-1,b)}$ for some $b$ and by \cite[Theorem 11.2.7]{KBook} $[e_1D^{(n-b-1,b)}:D^{(n-2)}]=\de_{b,0}$ (using that $n$ is odd, so that $(n-b-1,b)$ has only one normal node), it follows that $[e_1e_0D^{(n-a,a)}:D^{(n-2)}]=[e_0D^{(n-a,a)}:D^{(n-1)}]$. We will use the main theorem (on p.3304) of \cite{sh} without further reference until the end of Case 1.2. By block decomposition we have that
\[[e_0D^{(n-a,a)}:D^{(n-1)}]=\left\{\begin{array}{ll}
1,&a=0,\\
0,&a>0\text{ with }a\not=2^i\text{ for some }i\geq 1.
\end{array}\right.\]
So we may assume that $a=2^i$ with $i\geq 1$. Note that
\[n-2a=h2^k-2^{i+1}-1=(h-1)2^k-2^{i+1}+\sum_{j=0}^{k-1}2^j.\]

If $i\geq k$ then $h\geq 3$ is odd (since $2^i<n/2$ as $(n-2^i,2^i)$ is 2-regular). It follows that $2^k$ is the smallest power of 2 missing in the 2-adic decomposition of $n-2a$ and so $[e_0D^{(n-2^i,2^i)}:D^{(n-1)}]=0$.

If $i=k-1$ then $n-2a=(h-2)2^k+\sum_{j=0}^{k-1}2^j=(h-2)2^{i+1}+\sum_{j=0}^{i}2^j$. Since $\sum_{j=0}^{i}2^j$ appears in the 2-adic decomposition of $n-2a$, it follows that $[e_0D^{(n-2^i,2^i)}:D^{(n-1)}]=2$.

If $i\leq k-2$ then $n-2a=(h-1)2^k+\sum_{j=i+2}^{k-1}2^j+\sum_{j=0}^{i}2^j$, so again $[e_0D^{(n-2^i,2^i)}:D^{(n-1)}]=2$.

We will now use \eqref{E270223} to show that $[P^{(n)}:S((n-c',c'))]=2^{d_{n+1}}$. To see this, note that $d_{n+1}=d_h+k$. Further for $0\leq i\leq k-1$ we have
\[n-2^{i+1}+1=(h-1)2^k+\sum_{j=i+1}^{k-1}2^j.\]
Since $h\geq 2$ we then have that
\[d_{n-2^{i+1}+1}=%d_{h-1}+k+k-1-i=
d_{h-1}+2k-i-1=d_h+2k-i-2,\]
where the last equality holds since $h=2$ or $h\geq 3$ is odd. So
\begin{align*}
2^{d_{n-1}+1}-\sum_{i=1}^{k-1}2^{d_{n-2^{i+1}+1}+1}&=2^{d_h+2k-1}-\sum_{i=1}^{k-1}2^{d_h+2k-i-1}\\
%&=2^{d_h+2k-1}-2^{d_h+k-1}\sum_{i=1}^{k-1}2^i\\
&=2^{d_h+2k-1}-2^{d_h+2k-1}+2^{d_h+k}=2^{d_h+k}=2^{d_{n+1}}.
\end{align*}

{\bf Case 2:} $n$ is even. In this case $n-2b\geq 4$, so $n-2b-1\geq 3$. By case 1 we have that
%\begin{align*}
%[P^{(n-b-1,b)}]=&\sum_{a=b}^{M_{n-1}}g_{n-2b,a-b}[S^{(n-a-1,a)}]+2^{d_{n-2b}}[S((n-c-1,c))]\\
%&+\sum_{\la:h(\la)\geq 3}\ell_\la[S^\la]+\sum_{\mu:h(\mu)\geq 3}k_\mu[S(\mu)]
%\end{align*}
\begin{align*}
[P^{(n-b-1,b)}]\equiv&\sum_{a=b}^{M_{n-1}}g_{n-2b,a-b}[S^{(n-a-1,a)}]\\
&+2^{d_{n-2b}}[S((n-c-1,c))]\Md{T_{n-1}}
\end{align*}
with $c=n/2-1$ or $n/2-2$ depending on $n$ and $b$. Let $i$ be the residue of the two top addable nodes of $(n-b-1,b)$. If $g_{n-2b,a-b}=1$ then $S^{(n-a-1,a)}$ is in the same block of $D^{(n-b-1,b)}$ and so the two top addable nodes of $(n-a+1,a)$ have residue $i$. Then
%\begin{align*}
%&[f_iP^{(n-b-1,b)}]\\
%&=\sum_{a=b+1}^{M_{n-1}}g_{n-2b,a-b}([S^{(n-a,a)}]+[S^{(n-a-1,a+1)}])+2^{d_{n-2b}}[f_iS((n-c-1,c))]\\
%&+\sum_{\la:h(\la)\geq 3}\ell'_\la[S^\la]+\sum_{\mu:h(\mu)\geq 3}k'_\mu[S(\mu)].
%\end{align*}
\begin{align*}
[f_iP^{(n-b-1,b)}]\equiv&\sum_{a=b+1}^{M_{n-1}}g_{n-2b,a-b}([S^{(n-a,a)}]+[S^{(n-a-1,a+1)}])\\
&+2^{d_{n-2b}}[f_iS((n-c-1,c))]\Md{T_n}.
\end{align*}
Set $S^{(x,y)}:=0$ if $x<y$. For $a>n/2=M_{n-1}+1$ we have that
\[[S^{(n-a,a)}]+[S^{(n-a-1,a+1)}]=0,\]
while for $a=n/2$ we have that $g_{n-2b,a-b}=0$ since $n-2b=2(a-b)>0$. So %$[f_iP^{(n-b-1,b)}]$ is given by
%\begin{align*}
%&\sum_{a\geq b}g_{n-2b,a-b}([S^{(n-a,a)}]+[S^{(n-a-1,a+1)}])+2^{d_{n-2b}}[f_iS((n-c-1,c))]\\
%&+\sum_{\la:h(\la)\geq 3}\ell'_\la[S^\la]+\sum_{\mu:h(\mu)\geq 3}k'_\mu[S(\mu)].
%\end{align*}
\begin{align*}
[f_iP^{(n-b-1,b)}]\equiv&\sum_{a\geq b}g_{n-2b,a-b}([S^{(n-a,a)}]+[S^{(n-a-1,a+1)}])\\
&+2^{d_{n-2b}}[f_iS((n-c-1,c))]\Md{T_n}.
\end{align*}
Note that if $g_{n-2b,a-b}=1$ then $a-b$ is even. For $a\leq n/2$ we have that $a<n-b$ as $n-2b\geq 4$, so that $a-b<n-2b$. Thus if $a\leq n/2$ with $a-b$ even then $g_{n-2b,a-b}=g_{n-2b+1,a-b}=g_{n-2b+1,a+1-b}$. Again using that $[S^{(n-a,a)}]+[S^{(n-a-1,a+1)}]=0$ for $a>n/2$ it follows that
%\begin{align*}
%[f_iP^{(n-b-1,b)}]=&\sum_{a\geq b}g_{n-2b+1,a-b}[S^{(n-a,a)}]+2^{d_{n-2b}}[f_iS((n-c-1,c))]\\
%&+\sum_{\la:h(\la)\geq 3}\ell'_\la[S^\la]+\sum_{\mu:h(\mu)\geq 3}k'_\mu[S(\mu)]\\
%=&\sum_{a=b}^{M_n}g_{n-2b+1,a-b}[S^{(n-a,a)}]+2^{d_{n-2b}}[f_iS((n-c-1,c))]\\
%&+\sum_{\la:h(\la)\geq 3}\ell'_\la[S^\la]+\sum_{\mu:h(\mu)\geq 3}k'_\mu[S(\mu)].
%\end{align*}
\begin{align*}
&[f_iP^{(n-b-1,b)}]\equiv\sum_{a\geq b}g_{n-2b+1,a-b}[S^{(n-a,a)}]+2^{d_{n-2b}}[f_iS((n-c-1,c))]\\
&\equiv\sum_{a=b}^{M_n}g_{n-2b+1,a-b}[S^{(n-a,a)}]+2^{d_{n-2b}}[f_iS((n-c-1,c))]\Md{T_n}.
\end{align*}
Considering the values of $i$ and $c$ in each case, it can be checked that if $n\equiv 0\Md{8}$ and $b$ is even or if $n\equiv 4\Md{8}$ and $b$ is odd then
%\begin{align*}
%[f_iS((n-c-1,c))]=\,&2[S((n/2+1,n/2-1))]+2[S((n/2+2,n/2-2))]\\
%&+\de_{i=0}[S((n-c-1,c,1))]
%\end{align*}
\begin{align*}
[f_iS((n-c-1,c))]\equiv\,&2[S((n/2+1,n/2-1))]\\
&+2[S((n/2+2,n/2-2))]\Md{\Tspin_n}
\end{align*}
while in all other cases
%\[[f_iS((n-c-1,c))]=2[S((n/2+1,n/2-1))]+\de_{i=0}[S((n-c-1,c,1))].\]
\[[f_iS((n-c-1,c))]\equiv2[S((n/2+1,n/2-1))]\Md{\Tspin_n}.\]
In view of Lemmas \ref{e4} and \ref{e3} it follows that
%\[[f_iP^{(n-b-1,b)}]=[P^{(n-b,b)}]+\sum_{\la\in X_n}L_\la[P^\la].\]
\[[f_iP^{(n-b-1,b)}]\equiv[P^{(n-b,b)}]\Md{R_n}.\]
Since $d_{n-2b+1}=d_{n-2b}+1$ (by definition), the theorem then follows from Lemma \ref{e2}.

\section{Proof of Theorem \ref{T100822}}\label{s5}

In this section we will prove Theorem \ref{T100822}. We start with the case $n\equiv 2\Md{4}$ and $b$ odd (this is equivalent to $\dbl(n-b,b)$ having 2-core $(4,3,2,1)$ or $(3,2,1)$).

\begin{lemma}\label{T100822_2}
Let $p=2$, $n\equiv 2\Md{4}$, $0\leq a\leq m_n$ and $1\leq b\leq \overline{m}_n$ with $b$ odd. Then
\[[S((n-a,a),0):D^{\dbl(n-b,b)}]=g_{n-2b,a-b}.\]
\end{lemma}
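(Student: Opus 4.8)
The plan is to build a projective module whose spin-Specht decomposition we can read off, by starting from a suitable $P^\mu$ in $\widetilde{\s}_m$ with $m<n$, inducing it up with a controlled sequence of $f_i$'s, and tracking everything modulo $T_n$ and $R_n$ as in the framework of Section \ref{s3}. Concretely, since $n\equiv 2\Md 4$ and $b$ is odd, $\dbl(n-b,b)$ has $h_2(\cdot)=0$ and its $2$-core is $(3,2,1)$ or $(4,3,2,1)$; the natural object to start from is $P^{(n-b-1,b)}$ (or $P^{(n-b-1,b-1)}$, adjusting by one or two induction steps), for which Theorem \ref{t1} applied in the appropriate smaller rank gives a congruence of the shape $[P^{(n-b-1,b)}]\equiv\sum_a g_{\star,\star}[S^{(\cdot)}]+2^{d_\star}[S((\cdot))]\Md{T_{n-1}}$. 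One then applies a single $f_i$ (with $i$ the common residue of the two top addable nodes, exactly as in Case 2 of the proof of Theorem \ref{t1}), uses Lemmas \ref{L240223}–\ref{L240223_2} to compute $[f_iS((\cdot))]$, and invokes Lemma \ref{e3} to pass to a congruence $\Md{R_n}$ among projectives $P^{(n-a,a)}$ and $P^{\dbl(n-c,c)}$.

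The first key step is to verify that after this induction step the resulting projective is congruent modulo $R_n$ to a single $P^{\dbl(n-b,b)}$, i.e. that no $P^{(n-a,a)}$ and no other double appears. The absence of $P^{(n-a,a)}$ terms should follow from a block/content computation: with $n\equiv 2\Md 4$ and $b$ odd, $S^{(n-a,a)}$ and $D^{\dbl(n-b,b)}$ cannot lie in a common block, so these columns simply do not interact; the absence of other doubles $P^{\dbl(n-c,c)}$ with $c\neq b$ should follow from Lemma \ref{t2} together with the triangularity statement (the $(\overline{\dbl}(\cdot))^R$ bound), since we have engineered the construction so that $\dbl(n-b,b)=(\overline{\dbl}(n-b,b))^R$ is the leading term. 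The second key step is the arithmetic: one must check that the coefficient of $[S((n-a,a),0)]$ in the induced expression equals exactly $g_{n-2b,a-b}$, which amounts to the identity $g_{n-2b,a-b}=g_{n-2b+1,a-b}$ valid in the range $0\le a\le m_n$, $1\le b\le\overline m_n$ — this is the same reindexing already used near the end of Case 2 of the proof of Theorem \ref{t1}, using that $a-b<n-2b$ whenever $a\le n/2$ and $g_{n-2b,a-b}=1$ forces $a-b$ even. Finally, reading off $[S((n-a,a),0):D^{\dbl(n-b,b)}]=[f_iP^{\cdot}:S((n-a,a),0)]$ via Lemma \ref{e2} (there is no factor of $2$ to divide out since $h_2(\dbl(n-b,b))=0$ and $[P^{\dbl(n-b,b)}:D^{\dbl(n-b,b)}]=2^{\lfloor h_2/2\rfloor}=1$) gives the claimed formula.

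I expect the main obstacle to be bookkeeping the spin-branching term $[f_iS((n-c-1,c))]$ correctly modulo $\Tspin_n$: Lemmas \ref{L240223_2} attach weights $2^{x_A}$ to bar-addable nodes and one must carefully identify which bar-addable $i$-nodes of the relevant two-part strict partitions survive modulo the "three-parts" subgroup $\Tspin_n$, distinguishing the sub-cases according to $n\Md 8$ and the parity conditions exactly as the cases labelled 1.1.$k$ and Case 2 do for Theorem \ref{t1}. Once that computation is pinned down — showing it contributes precisely a factor making the $S((\cdot))$ coefficients match $2^{d_{n-2b}}\cdot(\text{something})$ and then collapse under Lemma \ref{e3} to the single $P^{\dbl(n-b,b)}$ with the right multiplicity $g_{n-2b,a-b}$ — the rest is routine. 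A secondary technical point is handling the boundary values ($a$ near $m_n$, $b$ near $\overline m_n$, and the edge cases of the floor in $\overline m_n$ depending on $n\Md 4$), where the partitions $(n-a,a)$, $\dbl(n-b,b)$ degenerate; these should be dispatched by the convention $S^{(x,y)}:=0$ for $x<y$ and by direct inspection, possibly after checking a few small $n$ against the known decomposition matrices in \cite{gap,ModularAtlas}.
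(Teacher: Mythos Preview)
Your proposal has a fundamental gap that cannot be repaired. You plan to start from a two-row projective $P^{(n-b-1,b)}$ (or $P^{(n-b-1,b-1)}$), apply a short sequence of $f_i$'s, and hope the result is congruent to a multiple of $P^{\dbl(n-b,b)}$ modulo $R_n$. But the block separation you yourself invoke is exactly why this cannot happen. With $n\equiv 2\Md 4$ and $b$ odd, $D^{\dbl(n-b,b)}$ lies in a block with $2$-core $(3,2,1)$ or $(4,3,2,1)$, whereas every two-row partition of $n$ has $2$-core $\varnothing$; there are no two-row partitions in the block of $\dbl(n-b,b)$ at all. Your choice of $i$ (``the common residue of the two top addable nodes, exactly as in Case~2 of the proof of Theorem~\ref{t1}'') is precisely the one for which that proof already establishes $[f_iP^{(n-b-1,b)}]\equiv[P^{(n-b,b)}]\Md{R_n}$, landing squarely in the principal block. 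Choosing the other residue, or starting from $P^{(n-b-1,b-1)}$ with two steps, does not help: a direct computation (try $n=10$, $b=1$) shows the spin terms either vanish or fail to contain $S((n-b,b))$, so Lemma~\ref{e3} gives $\overline k_b=0$ and $P^{\dbl(n-b,b)}$ never appears. In short, the ``first key step'' you describe cannot be carried out from any two-row starting point.

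The paper's argument is of an entirely different nature and does not build $P^{\dbl(n-b,b)}$ at all. Instead it works on the Specht side: using Stembridge's formula
\[
[S^{(n-x,x)}\otimes S((n))]=(1+\delta_{x\ne0})[S((n-x,x))]+(1+\delta_{x\ne1})[S((n-x+1,x-1))]
\]
(see \cite[Theorem 9.3]{s}), one inverts to write $[S((n-a,a),0)]$ as an alternating sum of tensor products $[S^{(n-x,x)}\otimes S((n))]$. Each $[S^{(n-x,x)}]$ is decomposed via Lemma~\ref{e4}, and then the crucial input is the Graham--James result \cite[Corollary 3.21]{gj}, which identifies $[D^{(n-y,y)}\otimes S((n))]$ with (twice) $[D^{\dbl(n-y,y)}]$. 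After a parity observation killing the even-$y$ contributions, the remaining sum of $g$'s telescopes to $g_{n-2b,a-b}$. This tensor-with-basic-spin trick is the missing idea in your plan, and it is precisely what allows one to pass from the principal-block two-row data to the non-principal block containing $\dbl(n-b,b)$.
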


\begin{proof}
If $a$ is even then $S((n-a,a),0)$ and $D^{\dbl(n-b,b)}$ are in different blocks. Since $n-2b$ is even while $a-b$ is odd, the theorem holds in this case. So we may assume that $a$ is odd.

By Lemma \ref{e4} we have that for any $0\leq x\leq M_n$ and $0\leq y\leq m_n$
\[[S^{(n-x,x)}:D^{(n-y,y)}]=g_{n-2y+1,x-y}.\]
By \cite[Theorem 9.3]{s}, in the Grothendieck group,
\[[S^{(n-x,x)}\otimes S((n))]=(1+\de_{x\not=0})[S((n-x,x))]+(1+\de_{x\not=1})[S((n-x+1,x-1))],\]
where $S((n/2,n/2))$ and $S((n+1,-1))$ are both defined to be 0. It follows that for $1\leq a\leq n/2-2$ odd
\[[S((n-a,a),0)]=1/2\sum_{c=0}^{(a-1)/2}([S^{(n-2c-1,2c+1)}\otimes S((n))]-[S^{(n-2c,2c)}\otimes S((n))].\]
If $y>2c+1$ then $D^{(n-y,y)}$ is not a composition factor of $S^{(n-2c-1,2c+1)}$ or $S^{(n-2c,2c)}$. If $y\leq 2c+1$ is even then
\begin{align*}
[S^{(n-2c-1,2c+1)}:D^{(n-y,y)}]&=g_{n-2y+1,2c-y+1}=g_{n-2y+1,2c-y}\\
&=[S^{(n-2c,2c)}:D^{(n-y,y)}]
\end{align*}
as both $n-2y$ and $2c-y$ are even. Thus
\begin{align*}
&[S((n-a,a),0)]\\
&=1/2\sum_{c=0}^{(a-1)/2}\sum_{z=0}^c(g_{n-4z-1,2c-2z}-g_{n-4z-1,2c-2z-1})[D^{(n-2z-1,2z+1)}\otimes S((n))]\\
&=\sum_{c=0}^{(a-1)/2}\sum_{z=0}^c(g_{n-4z-1,2c-2z}-g_{n-4z-1,2c-2z-1})[D^{\dbl(n-2z-1,2z+1)}],\\
&=\sum_{z=0}^{(a-1)/2}\sum_{c=z}^{(a-1)/2}(g_{n-4z-1,2c-2z}-g_{n-4z-1,2c-2z-1})[D^{\dbl(n-2z-1,2z+1)}]
\end{align*}
with the second equality holding by \cite[Corollary 3.21]{gj}. All partitions appearing are 2-regular, since $2c+1\leq a\leq \overline{m}_n<m_n$ and $0\leq z\leq c$.

Since $n-4z$ and $2c-2z$ are even
\begin{align*}
&\sum_{c=z}^{(a-1)/2}(g_{n-4z-1,2c-2z}-g_{n-4z-1,2c-2z-1})\\
&=\sum_{c=z}^{(a-1)/2}(g_{n-4z-2,2c-2z}-g_{n-4z-2,2c-2z-2})\\
&=g_{n-4z-2,a-2z-1}-g_{n-4z-2,-2}\\
&=g_{n-4z-2,a-2z-1}
\end{align*}
and so the theorem holds.
\end{proof}

In order to prove Theorem \ref{T100822} in general, we will use some block components of inductions/restrictions of the modules $P^{\dbl(a,b)}$ with $a\equiv b\equiv\pm 1\Md{4}$. We point out that the cases $b$ odd if $n\equiv 0\Md{4}$ or $b$ even if $n\equiv 2\Md{4}$ are not covered in the following lemma.

\begin{lemma}\label{L150223}
Let $1\leq b\leq \overline{m}_n$. Let $i=0$ if $b\equiv 0\text{ or }3\Md{4}$ or $i=1$ if $b\equiv 1\text{ or }2\Md{4}$. Define $F$, $C$ and $x$ and $y$ through:
\begin{itemize}
\item if $n\equiv 0\Md{4}$ and $b$ is odd then $x=2$, $y=0$, $F=f_i^2$ and $C=2$,

\item if $n\equiv 1\Md{4}$ and $b$ is even then $x=2$, $y=1$, $F=f_i^3$ and $C=6$,

\item if $n\equiv 1\Md{4}$ and $b$ is odd then $x=3$, $y=0$, $F=e_if_{1-i}f_i^3$ and $C=6(2+\de_{i=0})$,

\item if $n\equiv 2\Md{4}$ and $b$ is even then $x=3$, $y=1$, $F=f_{1-i}f_i^3$ and $C=6$,

\item if $n\equiv 3\Md{4}$ and $b$ is even then $x=4$, $y=1$, $F=f_{1-i}^2f_i^3$ and $C=12$,

\item if $n\equiv 3\Md{4}$ and $b$ is odd then $x=1$, $y=0$, $F=f_i$ and $C=1$.
\end{itemize}
Then %$[FP^{\dbl(n-b-x,b-y)}]=C[P^{\dbl(n-b,b)}]+[P]$ with $[P]=\sum_{\mu\in X_n}c_\mu[P^\mu]$
\[[FP^{\dbl(n-b-x,b-y)}]\equiv C[P^{\dbl(n-b,b)}]\Md{R_n}.\]
\end{lemma}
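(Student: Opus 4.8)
The plan is to prove Lemma~\ref{L150223} by a uniform strategy that combines the known form of $[P^{\dbl(n-b-x,b-y)}]$ in the Grothendieck group (modulo $T_{n-x}$) with the branching rules of Section~\ref{s2} and the regularisation-type bookkeeping of Lemma~\ref{e3}. In each of the six cases the relevant starting point is the decomposition of $P^{\dbl(n-b-x,b-y)}$ into a sum of Specht modules $S^{(m-a,a)}$ and spin modules $S((m-c,c))$ obtained by induction on $n$ (the base being small cases checked from \cite{gap,ModularAtlas}); concretely one needs an expression
\[
[P^{\dbl(n-b-x,b-y)}]\equiv\sum_a g_{n-2b-?,\,a-?}[S^{(n-x-a,a)}]+2^{d_?}[S((n-x-c,c))]\Md{T_{n-x}},
\]
where the precise indices depend on the residue class of $n$ modulo $8$ and on the parity of $b$; this is exactly the kind of formula already established while proving Theorem~\ref{t1}. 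The operator $F$ in each case is a product of $e_i,f_i$ chosen so that $F$ acts on the two ``top'' addable/removable nodes of $(n-b-x,b-y)$; since all $S^{(n-x-a,a)}$ appearing lie in the same block as $D^{\dbl(n-b-x,b-y)}$, the same residue pattern governs them and one can compute $[FS^{(n-x-a,a)}]$ termwise via Lemmas on $e_iS^\la$, $f_iS^\la$.

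First I would fix a case and push $F$ through the Specht part. Using $[f_iS^{(n-x-a,a)}]=\sum_A[S^{(n-x-a,a)\cup\{A\}}]$ and the analogue for $e_i$, together with the observation (as in the proof of Theorem~\ref{t1}) that terms with $x<y$ vanish and that $g$-values telescope along the relevant ladder, the Specht part of $[FP^{\dbl(n-b-x,b-y)}]$ collapses to $C\sum_a g_{n-2b+1,a-b}[S^{(n-a,a)}]$ (up to $T_n$), i.e. exactly $C$ times the Specht part of $[P^{\dbl(n-b,b)}]$ predicted by Lemma~\ref{e4} applied to the (known) column of $D^{\dbl(n-b,b)}$ in $S^{(n-a,a)}$ via Lemma~\ref{T100822_2}-type identities. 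The combinatorial constant $C$ is precisely the number of ways $F$ can build the node-configuration of $(n-b,b)$ from $(n-b-x,b-y)$: for $F=f_i^r$ this is $r!\binom{\eps_i+r}{r}$ by Lemma~\ref{L100822} applied to the symmetric-group side, and for the mixed products $e_if_{1-i}f_i^3$ etc. one multiplies the corresponding factorial binomials, which is where the values $6$, $6(2+\de_{i=0})$, $12$, $1$, $2$ come from.

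Next I would handle the spin part. Here one applies Lemmas~\ref{L240223} and \ref{L240223_2} to compute $[FS((n-x-c,c))]$; the $2^{x_A}$ factors in those lemmas (present exactly when $n-h(\la)$ is odd and the node is not the first node of the last row) combine with the $2^{d}$ coefficient in front to produce the coefficient $C$ again, while all spin terms $S(\mu)$ with $h(\mu)\geq 3$ are absorbed into $\Tspin_n$. The upshot is
\[
[FP^{\dbl(n-b-x,b-y)}]\equiv C\sum_a g_{n-2b+1,a-b}[S^{(n-a,a)}]+C\cdot 2^{?}[S((n-c',c'))]\Md{T_n}
\]
for the appropriate $c'$, and then Lemma~\ref{e3} converts this into a statement modulo $R_n$: the leading Specht coefficient forces a summand $C[P^{\dbl(n-b,b)}]$ (using $[P^{\dbl(n-b,b)}:D^{\dbl(n-b,b)}]=2^{\lfloor h_2/2\rfloor}$ from Lemma~\ref{t2} to match multiplicities), and the point is that nothing else survives — i.e.\ the other potential summands $P^{(n-a,a)}$ and $P^{\dbl(n-a,a)}$ all have coefficient $0$. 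Establishing that cancellation is the main obstacle: it requires checking, in each of the six residue-class cases separately, that the spin part of $[FP^{\dbl(n-b-x,b-y)}]$ modulo $\Tspin_n$ is exactly accounted for by the spin part of $C[P^{\dbl(n-b,b)}]$, with no leftover multiples of other $[S((n-a,a))]$; this is a finite but delicate case analysis driven by the residue of $n$ mod $8$, the parity of $b$, and the bar-residues along the ladder, exactly as in Cases 1.1.1–1.1.8 of the proof of Theorem~\ref{t1}. Once that is done, comparing the leading coefficients and invoking Lemma~\ref{e3} finishes each case, and hence the lemma.
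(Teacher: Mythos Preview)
Your proposal has a genuine gap, and in fact the paper proves this lemma by a completely different (and much shorter) mechanism.

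First, the starting formula you write down is not correct. Since $b-y\geq 1$ and $n-b-x\geq 3$, the partition $\dbl(n-b-x,b-y)$ has at least three parts, so by Lemma~\ref{e4} the module $D^{\dbl(n-b-x,b-y)}$ is never a composition factor of any two-part Specht module; hence $[P^{\dbl(n-b-x,b-y)}:S^{(m-a,a)}]=0$ for all $a$, and the ``Specht part'' in your displayed expression is identically zero. The formulas from the proof of Theorem~\ref{t1} that you refer to concern projectives $P^{(n-b,b)}$ indexed by two-part partitions, not projectives indexed by doubles, so they do not apply here.

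Second, and more importantly, the endgame is circular. After computing $[FP^{\dbl(n-b-x,b-y)}:S((n-a,a))]$ you want to invoke Lemma~\ref{e3} to conclude that modulo $R_n$ only $C[P^{\dbl(n-b,b)}]$ survives. But Lemma~\ref{e3} only gives \emph{upper bounds} $\overline{k}_c\leq k_c/2^{\lfloor h_2/2\rfloor}$ on the coefficients of the $P^{\dbl(n-c,c)}$; it pins down only the leading one. To show the remaining $\overline{k}_c$ vanish you would need to solve the triangular system $[FP:S((n-a,a))]=\sum_{c\leq a}\overline{k}_c\,[S((n-a,a)):D^{\dbl(n-c,c)}]$, and the matrix entries $[S((n-a,a)):D^{\dbl(n-c,c)}]$ are exactly what Theorem~\ref{T100822} computes \emph{using} Lemma~\ref{L150223}. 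So the ``main obstacle'' you identify (the cancellation of all other summands) cannot be resolved by this route without assuming what you are trying to prove.

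The paper bypasses this entirely. It never passes through Specht/spin decompositions; instead it uses Lemma~\ref{L100822} (and its dual~\ref{L100822_2}), which give $[f_j^rP^\al:P^\be]=[e_j^rD^\be:D^\al]$. One then only has to check the combinatorial inequality $\eps_j(\be)\leq\eps_j(\al)+r$ for every $\be$ of the form $(A,B)$ or $\dbl(A,B)$ in the right block, so that by \cite[Theorem 11.2.10]{KBook} the only $\be$ with nonzero multiplicity is $\tilde f_j^r\al$, with multiplicity $r!\binom{\eps_j(\al)+r}{r}$. Iterating this through the factors of $F$ (with a separate argument for the one $e_0$ step when $F=e_0f_1f_0^3$) yields the constant $C$ directly and shows all other coefficients vanish, with no appeal to decomposition numbers at all.
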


\begin{proof}
This can be seen by (repeated) application of the following argument and by comparing numbers of (co)normal nodes of all 2-regular partitions of the forms $(A,B)$, $\dbl(A,B)$ and in the last case also $(A,B,D)$ or $\dbl(A,B,1)$ with the right content.

Let $j$ be a residue, $r\geq 1$ and $\al=\dbl(N-B,B)$. 
If
%\[[Q]=d[P^\al]+\sum_{\mu\in X_N}d_\ga[P^\ga]\]
\[[Q]\equiv d[P^\al]\Md{R_N}\]
and that $\eps_j(\al)\geq\eps_j(\be)-r$ for all 2-regular partitions $\be$ of $N+r$ which are 2-parts partitions or doubles of 2-parts partitions with $P^\be$ in the same block as $f_j^r P^\al$. By Lemmas \ref{t3} and \ref{e4} we have that no composition factor of $\sum_{\mu\in X_N}d_\ga[P^\ga]$ is of the forms $S^{(N-e,e)}$ or $S((N-e,e),\ldots)$, so no composition factor of $\sum_{\mu\in X_N}d_\ga[f_j^rP^\ga]$ is of the forms $S^{(N+r-e,e)}$ or $S((N+r-e,e),\ldots)$ and then by Lemma \ref{L100822}
%$[f_j^rQ]=dr!\binom{\eps_j(\al)+r}{r}[P^{\tilde f_j^r\al}]+\sum_{\mu\in X_{N+r}}c_\ga[P^\ga]$.
\[[f_j^rQ]\equiv dr!\binom{\eps_j(\al)+r}{r}[P^{\tilde f_j^r\al}]\Md{R_{N+r}}.\]

Similarly by Lemma \ref{L100822_2} if $\phi_1(\al)\geq\phi_1(\be)-r$ for all 2-regular partitions $\be$ of $N-r$ which are 2-parts partitions or doubles of 2-parts partitions with $P^\be$ in the same block as $e_1^r P^\al$, then
%\[[e_1^rQ]=dr!\binom{\phi_1(\al)+r}{r}[P^{\tilde e_1^r\al}]+\sum_{\mu\in X_{N-r}}c_\ga[P^\ga]\]
\[[e_1^rQ]\equiv dr!\binom{\phi_1(\al)+r}{r}[P^{\tilde e_1^r\al}]\Md{R_{N-r}}\]
(as we are removing nodes of residue 1).

When taking $e_0f_1f_0^3 P^{\dbl(e,f)}$ a more careful analysis is needed. In this case $n\equiv 1\Md{4}$ and $b\equiv 3\Md{4}$, so that $e\equiv f\equiv 3\Md{4}$. It can be checked that
\begin{align*}
[f_0^3P^{\dbl(e,f)}]&=6[P^{\dbl(e+2,f+1)}]+[P]\\
[f_1f_0^3P^{\dbl(e,f)}]&=6[P^{\dbl(e+3,f+1)}]+[Q]
\end{align*}
with no projective module indexed by a partition with at most 3 rows or of the form $\dbl(E,F)$ or $\dbl(E,F,1)$ appearing in either $P$ or $Q$. So no module of the form $S^{(g,h)}$ or $S((g,h),\ldots)$ appears in $e_0Q$, which allows to show that% $[e_0f_1f_0^3P^{\dbl(e,f)}]=12[P^{\dbl(e+3,f)}]+[R]$, where projective modules appearing in $R$ are indexed by partitions in $X_{e+f+3}$.
\[[e_0f_1f_0^3P^{\dbl(e,f)}]\equiv 12[P^{\dbl(e+3,f)}]\Md{R_{e+f+3}}.\]
\end{proof}

We are now ready to prove Theorem \ref{T100822}.

\begin{proof}[Proof of Theorem \ref{T100822}]
If $n\equiv 2\Md{4}$ and $b$ is odd the theorem has been proved in Lemma \ref{T100822_2}. Let $i=0$ if $b\equiv 0\text{ or }3\Md{4}$ while $i=1$ if $b\equiv 1\text{ or }2\Md{4}$. By Lemma \ref{L150223} we have that
%\[[FP^{\dbl(n-b-x,b-y)}]=C[P^{\dbl(n-b,b)}]+[P]\]
%with $[P]=\sum_{\mu\in X_n}c_\mu[P^\mu]$ 
\[[FP^{\dbl(n-b-x,b-y)}]\equiv C[P^{\dbl(n-b,b)}]\Md{R_n}\]
for $F$, $C$ and $x$ and $y$ given by:
\begin{itemize}
\item if $n\equiv 0\Md{4}$ and $b$ is odd then $x=2$, $y=0$, $F=f_i^2$ and $C=2$,

\item if $n\equiv 1\Md{4}$ and $b$ is even then $x=2$, $y=1$, $F=f_i^3$ and $C=6$,

\item if $n\equiv 1\Md{4}$ and $b$ is odd then $x=3$, $y=0$, $F=e_if_{1-i}f_i^3$ and $C=6(2+\de_{i=0})$,

\item if $n\equiv 2\Md{4}$ and $b$ is even then $x=3$, $y=1$, $F=f_{1-i}f_i^3$ and $C=6$,

\item if $n\equiv 3\Md{4}$ and $b$ is even then $x=4$, $y=1$, $F=f_{1-i}^2f_i^3$ and $C=12$,

\item if $n\equiv 3\Md{4}$ and $b$ is odd then $x=1$, $y=0$, $F=f_i$ and $C=1$.
\end{itemize}
In view of Lemmas \ref{L100822} and \ref{L100822_2} we then have that in each case
\begin{align*}
&[S((n-a,a),\eps):D^{\dbl(n-b,b)}]\\
&=[P^{\dbl(n-b,b)}:S((n-a,a),\eps)]\\
&=1/C[FP^{\dbl(n-b-x,b-y)}:S((n-a,a),\eps)]\\
&=1/C\sum_\nu[P^{\dbl(n-b-x,b-y)}:S(\nu)][FS(\nu):S((n-a,a),\eps)]\\
&=1/C\sum_\nu[P^{\dbl(n-b-x,b-y)}:S(\nu)][FS(\nu):S((n-a,a))].
\end{align*}

Apart from the case $n\equiv 1\Md{4}$ and $b$ odd, $F=f_{i_1}\ldots f_{i_r}$ for some $r$ and $i_1,\ldots,i_r$. So we may limit the sum to partitions $\nu$ with at most 2-parts.

If $n\equiv 1\Md{4}$ and $b\equiv 1\Md{4}$ then $[P^{\dbl(n-b,b)}:S((n-a,a),\eps)]$ is obtained considering $e_1f_0f_1^3P^{\dbl(n-b-3,b)}$. Since removing 1-nodes from bar-partitions does not change their length, we may again restrict the sum to $\nu$ with at most 2-parts.

If $n\equiv 1\Md{4}$ and $b\equiv 3\Md{4}$ and $S((r,s,t),\eps'')$ is any module appearing in $P^{\dbl(n-b-3,b)}$ with $t\geq 1$ then $t>1$ by block decomposition (comparing bar-contents it can be checked that 2 of $r,s,t$ are $\equiv 3\Md{4}$ and the third is $\equiv 0\Md{4}$), so these modules do not give rise to modules of the form $S((n-a,a),\eps)$ in $e_0f_1f_0^3P^{\dbl(n-b-3,b)}$. So also in this case we only need to consider partitions $\nu$ with at most 2 parts.

Note that in each case $n-b-x-y\equiv 2\Md{4}$ and $b-y$ is odd. Thus
\begin{align*}
&[P^{\dbl(n-b-x,b-y)}:S((n-x-y-z,z))]\\
&=[P^{\dbl(n-b-x,b-y)}:S((n-x-y-z,z),\eps')]\\
&=g_{n-2b-x+y,z-b+y}
\end{align*}
by Lemma \ref{T100822_2}. Since $n-2b-x+y$ is even, if $g_{n-2b-x+y,z-b+y}\not=0$ then $z\equiv b-y\Md{2}$ is odd. So $[S((n-a,a),\eps):D^{\dbl(n-b,b)}]$ is given by
\begin{align*}
\sum_{z=0}^{(n-x-y-6)/4}\frac{g_{n-2b-x+y,2z-b+y+1}}{C}[FS((n-x-y-2z-1,2z+1)):S((n-a,a))]
\end{align*}
(since $\overline{m}_{n-x-y}=(n-x-y-6)/4$). If we set $S(\la):=0$ whenever $\la$ is not a 2-regular partition then $[S((n-a,a),\eps):D^{\dbl(n-b,b)}]$ is given by
\begin{align*}
\sum_{z\in\Z} \frac{g_{n-2b-x+y,2z-b+y+1}}{C}[FS((n-x-y-2z-1,2z+1)):S((n-a,a))].
\end{align*}

If $z<0$ then $2z-b+y-1<0$, while if $z=(n-x-y-2)/4$ then $2z-b+y+1=(n-2b-x+y)/2$. In either of these two cases $g_{n-2b-x+y,2z-b+y+1}=0$.

If $z\geq(n-x-y+2)/4$ then $n-x-y-2z-1\geq 2z+5$ and so $(n-x-y-2z+k-1,2z+\ell+1)$ is not a 2-regular partition for any $k\leq 4$ and $\ell\geq 0$.

Further, since $n-2b-x+y\equiv 0\Md{4}$, if $g_{n-2b-x+y,2z-b+y+1}=1$ then the partition $(n-x-y-2z-1,2z+1)$ has 2 (recursively) bar-addable nodes of residue $i$ on each of the first and second row (this could also be seen using block decomposition and comparing bar-contents) and that $n-x-y-2z-1\geq2z+5$ since $z\leq (n-x-y-6)/4$.

These facts can be used to check that
\[g_{n-2b-x+y,2z-b+y+1}[FS((n-x-y-2z-1,2z+1))]=0\]
whenever $S(\la)=0$.

We will now in each of the 6 cases compute
\[\sum_{z\in\Z} \frac{g_{n-2b-x+y,2z-b+y+1}}{C}[FS((n-x-y-2z-1,2z+1))]\]
and study the coefficient of $S((n-a,a))$ to prove the theorem.

Since $b\leq\overline{m}_n$, we always have that $n-2b-1\geq 4$ if $n\equiv 1\Md{4}$ and $b$ is even or $n\equiv 3\Md{4}$ and $b$ is odd, $n-2b-2\geq 4$ if $n\equiv 0\Md{4}$ and $b$ is odd or $n\equiv 2\Md{4}$ and $b$ is even and $n-2b-3\geq 4$ if $n\equiv 1\Md{4}$ and $b$ is odd or $n\equiv 3\Md{4}$ and $b$ is even. This will be used without further reference in the following case analysis to compare the existence of some $h$ with $r\leq 2^h\leq s$ whenever comparing distinct coefficients $g_{r,s}$. Comparing the 2-adic decompositions (or at least the last two summands in each of them) of the corresponding $r$ and $s$ can be done by writing each appearing number as $4t+u$ with $0\leq u\leq 3$.

{\bf Case 1:} $n\equiv 0\Md{4}$ and $b$ is odd. Then
%\begin{align*}
%&\sum_{z\in\Z} \frac{g_{n-2b-2,2z-b+1}}{2}[f_i^2S((n-2z-3,2z+1))]\\
%&=\sum_{z\in\Z} g_{n-2b-2,2z-b+1}([S((n-2z-1,2z+1))]+2[S((n-2z-2,2z+2))]\\
%&\hspace{11pt}+[S((n-2z-3,2z+3))])+\sum_{\mu:h(\mu)\geq 3}c_\mu[S(\mu)].
%\end{align*}
\begin{align*}
&\sum_{z\in\Z} \frac{g_{n-2b-2,2z-b+1}}{2}[f_i^2S((n-2z-3,2z+1))]\\
&\equiv\sum_{z\in\Z} g_{n-2b-2,2z-b+1}([S((n-2z-1,2z+1))]+2[S((n-2z-2,2z+2))]\\
&\hspace{11pt}+[S((n-2z-3,2z+3))])\Md{\Tspin_n}.
\end{align*}
So
\[[S((n-a,a),\eps):D^{\dbl(n-b,b)}]=\left\{\begin{array}{ll}
2g_{n-2b-2,a-b-1}&a\text{ is even},\\
g_{n-2b-2,a-b}+g_{n-2b-2,a-b-2}&a\text{ is odd}.
\end{array}\right.\]
We have to show that this is equal to $g_{n-2b,a-b}+2g_{n-2b-2,a-b-1}$.

Write $n-2b=4r+2$ and $a-b=4s+t$ with $0\leq t\leq 3$. If $a$ is even then $g_{n-2b,a-b}=0$, so the theorem holds. If $a$ is odd then $g_{n-2b-2,a-b-1}=0$. Further $g_{n-2b,a-b}=g_{4r+2,4s+t}=g_{4r,4s}$. If $t=0$ then
\[g_{n-2b-2,a-b}+g_{n-2b-2,a-b-2}=g_{4r,4s}+g_{4r,4(s-1)+2}=g_{4r,4s},\]
while if $t=2$ then
\[g_{n-2b-2,a-b}+g_{n-2b-2,a-b-2}=g_{4r,4s+2}+g_{4r,4s}=g_{4r,4s}.\]

{\bf Case 2:} $n\equiv 1\Md{4}$ and $b$ is even. Then
%\begin{align*}
%&\sum_{z\in\Z} \frac{g_{n-2b-1,2z-b+2}}{6}[f_i^3S((n-2z-4,2z+1))]\\
%&=\sum_{z\in\Z} g_{n-2b-1,2z-b+2}([S((n-2z-2,2z+2))]+[S((n-2z-3,2z+3))])\\
%&\hspace{11pt}+\sum_{\mu:h(\mu)\geq 3}c_\mu[S(\mu)].
%\end{align*}
\begin{align*}
&\sum_{z\in\Z} \frac{g_{n-2b-1,2z-b+2}}{6}[f_i^3S((n-2z-4,2z+1))]\\
&\equiv\sum_{z\in\Z} g_{n-2b-1,2z-b+2}([S((n-2z-2,2z+2))]+[S((n-2z-3,2z+3))])\\
&\hspace{11pt}\Md{\Tspin_n}.
\end{align*}
So
\[[S((n-a,a),\eps):D^{\dbl(n-b,b)}]=\left\{\begin{array}{ll}
g_{n-2b-1,a-b}&a\text{ is even},\\
g_{n-2b-1,a-b-1}&a\text{ is odd}.
\end{array}\right.\]
We have to show that this is equal to $g_{n-2b,a-b}$.

Write $n-2b=4r+1$ and $a-b=4s+t$ with $0\leq t\leq 3$. If $a$ is even then $t=0$ or 2, so $g_{n-2b-1,a-b}=g_{4r,4s+t}=g_{4r+1,4s+t}=g_{n-2b,a-b}$. If $a$ is odd then $t=1$ or 3, so $g_{n-2b-1,a-b-1}=g_{4r,4s+t-1}=g_{4r+1,4s+t}=g_{n-2b,a-b}$.

{\bf Case 3:} $n\equiv 1\Md{4}$ and $b$ is odd. Then
%\begin{align*}
%&\sum_{z\in\Z} \frac{g_{n-2b-3,2z-b+1}}{6(2+\de_{i=0})}[e_if_{1-i}f_i^3S((n-2z-4,2z+1))]\\
%&=\sum_{z\in\Z} g_{n-2b-3,2z-b+1}([S((n-2z-1,2z+1))]+[S((n-2z-4,2z+4))])\\
%&\hspace{11pt}+\sum_{\mu:h(\mu)\geq 3}c_\mu[S(\mu)]
%\end{align*}
\begin{align*}
&\sum_{z\in\Z} \frac{g_{n-2b-3,2z-b+1}}{6(2+\de_{i=0})}[e_if_{1-i}f_i^3S((n-2z-4,2z+1))]\\
&\equiv\sum_{z\in\Z} g_{n-2b-3,2z-b+1}([S((n-2z-1,2z+1))]+[S((n-2z-4,2z+4))])\\
&\hspace{11pt}\Md{\Tspin_n}
\end{align*}
(if $i=0$ we can also add and remove a node to the third row). So
\[[S((n-a,a),\eps):D^{\dbl(n-b,b)}]=\left\{\begin{array}{ll}
g_{n-2b-3,a-b-3}&a\text{ is even},\\
g_{n-2b-3,a-b}&a\text{ is odd}.
\end{array}\right.\]
We have to show that this is equal to $g_{n-2b,a-b}-g_{n-2b-2,a-b-1}$.

Write $n-2b=4r+3$ and $a-b=4s+t$ with $0\leq t\leq 3$. If $t=0$ then $g_{n-2b-3,a-b}=g_{4r,4s}$ and
\[g_{n-2b,a-b}-g_{n-2c-2,a-b-1}=g_{4r+3,4s}-g_{4r+1,4(s-1)+3}=g_{4r,4s}+0=g_{4r,4s}.\]
If $t=1$ then $g_{n-2b-3,a-b-3}=g_{4r,4(s-1)+2}=0$ and
\[g_{n-2b,a-b}-g_{n-2c-2,a-b-1}=g_{4r+3,4s+1}-g_{4r+1,4s}=0.\]
If $t=2$ then $g_{n-2b-3,a-b}=g_{4r,4s+2}=0$ and
\[g_{n-2b,a-b}-g_{n-2c-2,a-b-1}=g_{4r+3,4s+2}-g_{4r+1,4s+1}=0.\]
If $t=3$ then $g_{n-2b-3,a-b-3}=g_{4r,4s}$ and
\[g_{n-2b,a-b}-g_{n-2c-2,a-b-1}=g_{4r+3,4s+3}-g_{4r+1,4s+2}=g_{4r,4s}.\]

{\bf Case 4:} $n\equiv 2\Md{4}$ and $b$ is even. Then
%\begin{align*}
%&\sum_{z\in\Z} \frac{g_{n-2b-2,2z-b+2}}{6}[f_{1-i}f_i^3S((n-2z-5,2z+1))]\\
%&=\sum_{z\in\Z} g_{n-2b-2,2z-b+2}(2[S((n-2z-2,2z+2))]+2[S((n-2z-4,2z+4))])\\
%&\hspace{11pt}+\sum_{\mu:h(\mu)\geq 3}c_\mu[S(\mu)].
%\end{align*}
\begin{align*}
&\sum_{z\in\Z} \frac{g_{n-2b-2,2z-b+2}}{6}[f_{1-i}f_i^3S((n-2z-5,2z+1))]\\
&\equiv\sum_{z\in\Z} g_{n-2b-2,2z-b+2}(2[S((n-2z-2,2z+2))]+2[S((n-2z-4,2z+4))])\\
&\hspace{11pt}\Md{\Tspin_n}.
\end{align*}
So
\[[S((n-a,a),\eps):D^{\dbl(n-b,b)}]=\left\{\begin{array}{ll}
2g_{n-2b-2,a-b}+2g_{n-2b-2,a-b-2}&a\text{ is even},\\
0&a\text{ is odd}.
\end{array}\right.\]
We have to show that this is equal to $2g_{n-2b,a-b}$.

Write $n-2b=4r+2$ and $a-b=4s+t$ with $0\leq t\leq 3$. If $a$ is odd then $a-b$ is also odd and so $g_{n-2b,a-b}=0$. If $a$ is even then $t=0$ or 2 and we can conclude similarly to the $a$ odd case in Case 1.

{\bf Case 5:} $n\equiv 3\Md{4}$ and $b$ is even. Then
%\begin{align*}
%&\sum_{z\in\Z} \frac{g_{n-2b-3,2z-b+2}}{12}[f_{1-i}^2f_i^3S((n-2z-6,2z+1))]\\
%&=\sum_{z\in\Z} g_{n-2b-3,2z-b+2}([S((n-2z-2,2z+2))]+[S((n-2z-5,2z+5))])\\
%&\hspace{11pt}+\sum_{\mu:h(\mu)\geq 3}c_\mu[S(\mu)].
%\end{align*}
\begin{align*}
&\sum_{z\in\Z} \frac{g_{n-2b-3,2z-b+2}}{12}[f_{1-i}^2f_i^3S((n-2z-6,2z+1))]\\
&\equiv\sum_{z\in\Z} g_{n-2b-3,2z-b+2}([S((n-2z-2,2z+2))]+[S((n-2z-5,2z+5))])\\
&\hspace{11pt}\Md{\Tspin_n}.
\end{align*}
So
\[[S((n-a,a),\eps):D^{\dbl(n-b,b)}]=\left\{\begin{array}{ll}
g_{n-2b-3,a-b}&a\text{ is even},\\
g_{n-2b-3,a-b-3}&a\text{ is odd}.
\end{array}\right.\]
We can show similarly to Case 3 that this equals $g_{n-2b,a-b}-g_{n-2b-2,a-b-1}$.

{\bf Case 6:} $n\equiv 3\Md{4}$ and $b$ is odd. Then
%\begin{align*}
%&\sum_{z\in\Z} g_{n-2b-1,2z-b+1}[f_iS((n-2z-2,2z+1))]\\
%&=\sum_{z\in\Z} g_{n-2b-1,2z-b+1}([S((n-2z-1,2z+1))]+[S((n-2z-2,2z+2))])\\
%&\hspace{11pt}+\sum_{\mu:h(\mu)\geq 3}c_\mu[S(\mu)].
%\end{align*}
\begin{align*}
&\sum_{z\in\Z} g_{n-2b-1,2z-b+1}[f_iS((n-2z-2,2z+1))]\\
&\equiv\sum_{z\in\Z} g_{n-2b-1,2z-b+1}([S((n-2z-1,2z+1))]+[S((n-2z-2,2z+2))])\\
&\hspace{11pt}\Md{\Tspin_n}.
\end{align*}
So
\[[S((n-a,a),\eps):D^{\dbl(n-b,b)}]=\left\{\begin{array}{ll}
g_{n-2b-1,a-b-1}&a\text{ is even},\\
g_{n-2b-1,a-b}&a\text{ is odd}.
\end{array}\right.\]
It can be proved similarly to Case 2 that this equals $g_{n-2b,a-b}$.
\end{proof}

\section{Proof of Theorem \ref{T170223}}\label{s6}

In this section we will prove Theorem \ref{T170223}.

Let $n\equiv 0\Md{4}$ and $2\leq b\leq\overline{m}_n$ be even. Further let $i=0$ if $b\equiv 0\Md{4}$ and $i=1$ if $b\equiv 2\Md{4}$.

By Lemmas \ref{T100822} and \ref{L150223} and by \cite[Remark 11.2.9]{KBook} we have that
\begin{align*}
[f_if_{1-i}^2f_i^3P^{\dbl(n-b-5,b-1)}:P^{\dbl(n-b,b)}]&=12,\\
[f_iP^{\dbl(n-b,b)}:P^{\dbl(n-b+1,b)}]&=2,\\
[f_iP^{\dbl(n-b,b)}:P^{\dbl(n-b,b+1)}]&=2.
\end{align*}
By Theorem \ref{T100822}
\begin{align*}
[P^{\dbl(n-b-5,b-1)}:S((n-6-c,c))]&=g_{n-2b-4,c-b+1},\\
[P^{\dbl(n-b+1,b)}:S((n-c+1,c))]&=g_{n-2b+1,c-b},\\
[P^{\dbl(n-b,b+1)}:S((n+1-c,c))]&=g_{n-2b-1,c-b-1}-g_{n-2b-3,c-b-2}
\end{align*}
for $0\leq c\leq m_{n-6}$ or $0\leq c\leq m_{n+1}$ respectively.

We will use this to compute upper bounds on the decomposition numbers. In some cases we will also show that these upper bounds actually give the actual decomposition numbers. %Due to $[f_iP^{\dbl(n-b,b)}]=2[P^{\dbl(n-b+1,b)}]+2[P^{\dbl(n-b,b+1)}]$ we will show that in some cases these upper bounds give the actual decomposition numbers.
Note that by the above
\begin{align*}
[f_if_{1-i}^2f_i^3P^{\dbl(n-b-5,b-1)}]&\equiv12[P^{\dbl(n-b,b)}]\Md{R_n},\\
[f_iP^{\dbl(n-b,b)}]&\equiv2[P^{\dbl(n-b+1,b)}]+2[P^{\dbl(n-b,b+1)}]\Md{R_{n+1}}.
\end{align*}

Similar to the proof of Theorem \ref{T100822},
\begin{align*}
&[S((n-a,a),\eps):D^{\dbl(n-b,b)}]=[P^{\dbl(n-b,b)}:S((n-a,a))]\\
&\leq\frac{1}{12}[f_if_{1-i}^2f_i^3P^{\dbl(n-b-5,b-1)}:S((n-a,a))]\\
&=\sum_\nu\frac{1}{24}[P^{\dbl(n-b-5,b-1)}:S(\nu)][f_if_{1-i}^2f_i^3S(\nu):S((n-a,a))]\\
&=\sum_{z\in\Z} \frac{g_{n-2b-4,2z-b+2}}{24}[f_if_{1-i}^2f_i^3S((n-2z-7,2z+1)):S((n-a,a))].
\end{align*}
and
\begin{align*}
&[P^{\dbl(n-b+1,b)}:S((n-c+1,c))]+[P^{\dbl(n-b,b+1)}:S((n-c+1,c))]\\
&\leq\frac{1}{24}[f_i^2f_{1-i}^2f_i^3P^{\dbl(n-b-5,b-1)}:S((n-a,a))]\\
&=\sum_\nu\frac{1}{24}[P^{\dbl(n-b-5,b-1)}:S(\nu)][f_i^2f_{1-i}^2f_i^3S(\nu):S((n-a,a))]\\
&=\sum_{z\in\Z} \frac{g_{n-2b-4,2z-b+2}}{24}[f_i^2f_{1-i}^2f_i^3S((n-2z-7,2z+1)):S((n-a,a))].
\end{align*}

Again let $S((c,d)):=0$ whenever $(c,d)$ is not a 2-regular partition. If $2z+1\not\equiv b-1\Md{4}$ then $g_{n-2b-4,2z-b+2}=0$ since $n-2b\equiv 0\Md{4}$. If $n-4z-8=0$ then $2z-b+2=(n-2b-4)/2$ and so $g_{n-2b-4,2z-b+2}=0$. If $2z+1\equiv b-1\Md{4}$ and $n-4z-8\not=0$ then
%\begin{align*}
%&\frac{1}{12}[f_if_{1-i}^2f_i^3S((n-2z-7,2z+1))]\\
%&=2[S((n-2z-2,2z+2))]+2[S((n-2z-3,2z+3))]\\
%&\hspace{11pt}+2[S((n-2z-5,2z+5))]+2[S((n-2z-6,2z+6))]\\
%&\hspace{11pt}+\sum_{\mu:h(\mu)\geq 3}c_\mu[S(\mu)]
%\end{align*}
\begin{align*}
&\frac{1}{12}[f_if_{1-i}^2f_i^3S((n-2z-7,2z+1))]\\
&\equiv2[S((n-2z-2,2z+2))]+2[S((n-2z-3,2z+3))]\\
&\hspace{11pt}+2[S((n-2z-5,2z+5))]+2[S((n-2z-6,2z+6))]\Md{\Tspin_n}
\end{align*}
and
%\begin{align*}
%&\frac{1}{24}[f_i^2f_{1-i}^2f_i^3S((n-2z-7,2z+1))]\\
%&=\de_{z\not=(n-4)/4}[S((n-2z-1,2z+2))]+2[S((n-2z-2,2z+3))]\\
%&\hspace{11pt}+(1+\de_{z\not=(n-12)/4})[S((n-2z-5,2z+6))]+[S((n-2z-6,2z+7))]\\
%&\hspace{11pt}+\sum_{\mu:h(\mu)\geq 3}c_\mu[S(\mu)].
%\end{align*}
\begin{align*}
&\frac{1}{24}[f_i^2f_{1-i}^2f_i^3S((n-2z-7,2z+1))]\\
&\equiv\de_{z\not=(n-4)/4}[S((n-2z-1,2z+2))]+2[S((n-2z-2,2z+3))]\\
&\hspace{11pt}+(1+\de_{z\not=(n-12)/4})[S((n-2z-5,2z+6))]\\
&\hspace{11pt}+[S((n-2z-6,2z+7))]\Md{\Tspin_{n+1}}.
\end{align*}

It follows that
\begin{align}\label{E170223}
&[S((n-a,a),\eps):D^{\dbl(n-b,b)}]\\
&\leq\frac{1}{12}[f_if_{1-i}^2f_i^3P^{\dbl(n-b-5,b-1)}:S((n-a,a))]\notag\\
&=\left\{\begin{array}{ll}
2g_{n-2b-4,a-b}+2g_{n-2b-4,a-b-4},&a\text{ is even,}\\
2g_{n-2b-4,a-b-1}+2g_{n-2b-4,a-b-3},&a\text{ is odd}
\end{array}\right.\notag
%=2g_{n-2b-3,a-b}+2g_{n-2b-3,a-b-3}\notag
\end{align}
and
\begin{align}\label{E170223_2}
&g_{n-2b+1,c-b}+g_{n-2b-1,c-b-1}-g_{n-2b-3,c-b-2}\\
&=[P^{\dbl(n-b+1,b)}:S((n-c+1,c))]+[P^{\dbl(n-b,b+1)}:S((n-c+1,c))]\notag\\
&\leq\frac{1}{24}[f_i^2f_{1-i}^2f_i^3P^{\dbl(n-b-5,b-1)}:S((n-c+1,c))]\notag\\
&=\left\{\begin{array}{ll}
\de_{c\not=n/2}g_{n-2b-4,c-b}+(1+\de_{c\not=n/2})g_{n-2b-4,c-b-4},&c\text{ is even,}\\
2g_{n-2b-4,c-b-1}+g_{n-2b-4,c-b-5},&c\text{ is odd.}
\end{array}\right.\notag
\end{align}
If equality holds in \eqref{E170223_2} for some $c$ with $c-b\equiv 0$ or $1\Md{4}$, then equality must hold in \eqref{E170223} for $a\in\{c-1,c\}$, since $S((n-c+1,c))$ appears in both $f_iS((n-c+1,c-1))$ and $f_iS((n-c,c))$ (the first one provided $c\geq 1$).

Since $b\leq n/2-4$ we have that $n-2b-4\geq 4$. It can thus be checked (considering all possibilities for $a-b\Md{4}$ and writing all appearing numbers in the form $4r+s$ with $0\leq s\leq 3$) that
\begin{align*}
&2g_{n-2b-3,a-b}+2g_{n-2b-3,a-b-3}\\
&=\left\{\begin{array}{ll}
2g_{n-2b-4,a-b}+2g_{n-2b-4,a-b-4},&a\text{ is even,}\\
2g_{n-2b-4,a-b-1}+2g_{n-2b-4,a-b-3},&a\text{ is odd,}
\end{array}\right.
\end{align*}
so that
\[[S((n-a,a),\eps):D^{\dbl(n-b,b)}]\leq 2g_{n-2b-3,a-b}+2g_{n-2b-3,a-b-3}.\]

If $a-b\equiv 2\Md{4}$ then from $n-2b-3\equiv 1\Md{4}$ (as $n\equiv 0\Md{4}$ and $b$ is even), we have that
\[[S((n-a,a),\eps):D^{\dbl(n-b,b)}]\leq 2g_{n-2b-3,a-b}+2g_{n-2b-3,a-b-3}=0,\]
so that equality holds.

We may now assume that $a-b\not\equiv 2\Md{4}$ and that at least one of $\nu_2((\lfloor (a-b+1)/4\rfloor))\geq\nu_2((n-2b)/4)$ or $g_{n-2b-4,4\lfloor(a-b+1)/4\rfloor-4}=0$ holds. In this case we will show that equality holds in \eqref{E170223_2} with $c=a+1$ if $a-b\equiv 3\Md{4}$, $c=a$ or $a-1$ if $a-b\equiv 0\Md{4}$ or $c=a$ if $a-b\equiv 1\Md{4}$. The assumptions $\nu_2((\lfloor (a-b+1)/4\rfloor))\geq\nu_2((n-2b)/4)$ and $g_{n-2b-4,4\lfloor(a-b+1)/4\rfloor-4}=0$ then become $\nu_2((\lfloor (c-b)/4\rfloor))\geq\nu_2((n-2b)/4)$ and $g_{n-2b-4,4\lfloor(c-b)/4\rfloor-4}=0$ respectively.

If $c=n/2$ and $c-b\equiv0$ or $1\Md{4}$ then $c\equiv 0\Md{4}$, since $b$ and $n/2=c$ are both even. In this case
\[\nu_2((\lfloor (c-b)/4\rfloor))=\nu_2((n-2b)/8)<\nu_2((n-2b)/4).\]
Further the right-hand side of \eqref{E170223_2} is $g_{n-2b-4,c-b-4}=g_{n-2b-4,4\lfloor(c-b)/4\rfloor-4}$. If this is 0 then equality holds, since the right-hand side is non-negative.

We may now assume that $c<n/2$. Write $n-2b=4k$ and $c-b=4\ell+x$ with $x\in\{0,1\}$. If $x=0$ then
\begin{align*}
&g_{n-2b+1,c-b}+g_{n-2b-1,c-b-1}-g_{n-2b-3,c-b-2}\\
&=g_{4k+1,4\ell}+g_{4(k-1)+3,4(\ell-1)+3}-g_{4(k-1)+1,4(\ell-1)+2}\\
&=g_{4k+1,4\ell}+g_{4(k-1)+3,4(\ell-1)+3}
\end{align*}
and
\begin{align*}
g_{n-2b-4,c-b}+2g_{n-2b-4,c-b-4}&=g_{4(k-1),4\ell}+2g_{4(k-1),4(\ell-1)}.
\end{align*}
If $x=1$ then
\begin{align*}
&g_{n-2b+1,c-b}+g_{n-2b-1,c-b-1}-g_{n-2b-3,c-b-2}\\
&=g_{4k+1,4\ell+1}+g_{4(k-1)+3,4\ell}-g_{4(k-1)+1,4(\ell-1)+3}\\
&=g_{4k+1,4\ell+1}+g_{4(k-1)+3,4\ell}
\end{align*}
and
\begin{align*}
2g_{n-2b-4,c-b-1}+g_{n-2b-4,c-b-5}&=2g_{4(k-1),4\ell}+g_{4(k-1),4(\ell-1)}.
\end{align*}
Since again $n-2b-4\geq 4$, it follows that in either of the two cases equality in \eqref{E170223_2} holds if and only if
\begin{align}\label{E210223}
g_{k,\ell}=g_{k-1,\ell}+g_{k-1,\ell-1}.
\end{align}
We will show that this holds whenever
\[\nu_2(\ell)=\nu_2((\lfloor (c-b)/4\rfloor))\geq\nu_2((n-2b)/4)=\nu_2(k)\]
or
\[g_{k-1,\ell-1}=g_{4(k-1),4(\ell-1)}=g_{n-2b-4,4\lfloor(c-b)/4\rfloor-4}=0\]
hold.

If $c=b$ then $\ell=0$. As $n-2b\geq 8$ we have that $k\geq 2$ and then \eqref{E210223} holds.

We may now assume that $\ell>0$ and write $k=2^y(2\overline{k}+1)$ and $\ell=2^z(2\overline{\ell}+1)$ with $\overline{k}$ and $\overline{\ell}$ non-negative integers.

{\bf Case 1:} $\nu_2(\ell)>\nu_2(k)$. Then $z>y$, so that $\ell=2^{y+1}\ell'$ with $\ell'$ integer, and
\begin{align*}
g_{k,\ell}&=g_{2^{y+1}\overline{k}+2^y,2^{y+1}\ell'}=g_{\overline{k},\ell'}\\
g_{k-1,\ell}&=g_{2^{y+1}\overline{k}+2^{y-1}+\ldots+1,2^{y+1}\ell'}=g_{\overline{k},\ell'}\\
g_{k-1,\ell-1}&=g_{2^{y+1}\overline{k}+2^{y-1}+\ldots+1,2^{z+1}\overline{\ell}+2^{z-1}+\ldots+2^y+2^{y-1}+\ldots+1)}=0.
\end{align*}
So \eqref{E210223} holds.

{\bf Case 2:} $\nu_2(\ell)=\nu_2(k)$. In this case $z=y$ so
\begin{align*}
g_{k,\ell}&=g_{2^{y+1}\overline{k}+2^y,2^{y+1}\overline{\ell}+2^y}=g_{\overline{k},\overline{\ell}}\\
g_{k-1,\ell}&=g_{2^{y+1}\overline{k}+2^{y-1}+\ldots+1,2^{y+1}\overline{\ell}+2^y}=0\\
g_{k-1,\ell-1}&=g_{2^{y+1}\overline{k}+2^{y-1}+\ldots+1,2^{y+1}\overline{\ell}+2^{y-1}+\ldots+1}=g_{\overline{k},\overline{\ell}}
\end{align*}
and then \eqref{E210223} holds.

{\bf Case 3:} $\nu_2(\ell)<\nu_2(k)$. We may assume that $g_{k-1,\ell-1}=0$. Since $\nu_2(\ell)<\nu_2(k)$ we have that $g_{k,\ell}=0$. Further $z\leq y-1$, so
\begin{align*}
g_{k-1,\ell}&=g_{2^{y+1}\overline{k}+2^{y-1}+\ldots+1,2^{z+1}\overline{\ell}+2^z}\\
&\leq g_{2^{y+1}\overline{k}+2^{y-1}+\ldots+1,2^{z+1}\overline{\ell}+2^{z-1}+\ldots+1}\\
&=g_{k-1,\ell-1}=0.
\end{align*}
In particular \eqref{E210223} holds also in this case.

\appendix

\section{Examples}\label{appendix}

We show through some small decomposition matrices which parts of the decomposition matrices can be computed using Theorems \ref{t1}, \ref{T100822} and \ref{T170223}. Since if $n\not\equiv 0\Md{4}$ or if $n\leq 8$ and $n\equiv 0\Md{4}$ only the column corresponding to $(\overline{\dbl}(n))^R$ cannot be computed through the first two of these results, we consider only cases $n\geq 12$ and $n\equiv 0\Md{4}$ here and cover the first few such cases.

We color the columns labeling as follows: red if the corresponding column is covered by Theorem \ref{t1}, blue if covered by Theorem \ref{T100822} and green if (partly) covered by Theorem \ref{T170223}.

In general, for the columns of $D^{(\overline{\dbl}(n))^R}$, \cite[Tables III, IV]{Wales} can be used to find the first two entries, but no further information is known (apart for small $n$).

For $n=12$ the decomposition matrix can be recovered from \cite{gap} (and some computations to identify rows and columns), which we use to give the missing decomposition numbers (all in the column of $D^{(7,5)}$). %In this case, the only decomposition numbers that cannot be completely computed using the above results, are those in the column of $D^{(7,5)}$. %For this column we give the exact decomposition numbers from \cite{gap}.

We add $=?$ for known decomposition numbers that are not computed using Theorems \ref{t1}, \ref{T100822} and \ref{T170223}

As usual, missing numbers should be interpreted as 0.

\[\begin{tabular}{l|ccccccccc}
&\rotatebox[origin=c]{90}{$\color{red}D^{(12)}\color{black}$}&\rotatebox[origin=c]{90}{$\color{red}D^{(11,1)}\color{black}$}&\rotatebox[origin=c]{90}{$\color{red}D^{(10,2)}\color{black}$}&\rotatebox[origin=c]{90}{$\color{red}D^{(9,3)}\color{black}$}&\rotatebox[origin=c]{90}{$\color{red}D^{(8,4)}\color{black}$}&\rotatebox[origin=c]{90}{$D^{(7,5)}$}&\rotatebox[origin=c]{90}{$\color{blue}D^{(6,5,1)}\color{black}$}&\rotatebox[origin=c]{90}{$\color{green}D^{(6,4,2)}\color{black}$}&\rotatebox[origin=c]{90}{$\color{blue}D^{(5,4,2,1)}\color{black}$}\\ \hline
$S((12),\pm)$&&&&&&1=?\\
$S((11,1),0)$&&&&&&1=?&\tikzmarknode{a}{1}\\
$S((10,2),0)$&&&&&&0=?&2&2\\
$S((9,3),0)$&&&&&&1=?&1&2&1\\
$S((8,4),0)$&\tikzmarknode{c}{\color{white}0\color{black}}&8&&4&&2=?&&&2\\
$S((7,5),0)$&8&8&4&4&\tikzmarknode{d}{2}&1=?&&2&\tikzmarknode{b}{1}
\end{tabular}
\begin{tikzpicture}[overlay,remember picture]
\node[draw, thin, inner sep=1.5pt, fit=(a) (b)] {};
\node[draw, thin, inner sep=1.5pt, fit=(c) (d)] {};
\end{tikzpicture}
\]

\[\begin{tabular}{l|ccccccccccccc}
&\rotatebox[origin=c]{90}{$\color{red}D^{(16)}\color{black}$}&\rotatebox[origin=c]{90}{$\color{red}D^{(15,1)}\color{black}$}&\rotatebox[origin=c]{90}{$\color{red}D^{(14,2)}\color{black}$}&\rotatebox[origin=c]{90}{$\color{red}D^{(13,3)}\color{black}$}&\rotatebox[origin=c]{90}{$\color{red}D^{(12,4)}\color{black}$}&\rotatebox[origin=c]{90}{$\color{red}D^{(11,5)}\color{black}$}&\rotatebox[origin=c]{90}{$\color{red}D^{(10,6)}\color{black}$}&\rotatebox[origin=c]{90}{$D^{(9,7)}$}&\rotatebox[origin=c]{90}{$\color{blue}D^{(8,7,1)}\color{black}$}&\rotatebox[origin=c]{90}{$\color{green}D^{(8,6,2)}\color{black}$}&\rotatebox[origin=c]{90}{$\color{blue}D^{(7,6,2,1)}\color{black}$}&\rotatebox[origin=c]{90}{$\color{green}D^{(7,5,3,1)}\color{black}$}&\rotatebox[origin=c]{90}{$\color{blue}D^{(6,5,3,2)}\color{black}$}\\ \hline
$S((16),\pm)$&&&&&&&&1=?\\
$S((15,1),0)$&&&&&&&&1=?&\tikzmarknode{a2}{1}\\
$S((14,2),0)$&&&&&&&&?&2&2\\
$S((13,3),0)$&&&&&&&&?&1&2&\tikzmarknode{a}{1}\\
$S((12,4),0)$&&&&&&&&?&&&2&2\\
$S((11,5),0)$&&&&&&&&?&1&2&1&2&1\\
$S((10,6),0)$&\tikzmarknode{c2}{\color{white}0\color{black}}&16&\tikzmarknode{c}{\color{white}0\color{black}}&8&&4&&?&2&2&&&2\\
$S((9,7),0)$&8&16&8&8&4&4&\tikzmarknode{d}{2}&?&1&&&2&\tikzmarknode{b}{1}
\end{tabular}
\begin{tikzpicture}[overlay,remember picture]
\node[draw, thin, inner sep=1.5pt, fit=(a) (b)] {};
\node[draw, thin, inner sep=1.5pt, fit=(c) (d)] {};
\node[draw=gray, thin, inner sep=4pt, fit=(a2) (b)] {};
\node[draw=gray, thin, inner sep=4pt, fit=(c2) (d)] {};
\end{tikzpicture}\]

%15=2^3+2^2+2^1+2^0, d_{15}=3+4-3=4
%17=2^4+2^0, d_{17}=4+2-3=3

\[{\tiny\begin{tabular}{l|ccccccccccccccccc}
&\rotatebox[origin=c]{90}{$\color{red}D^{(20)}\color{black}$}&\rotatebox[origin=c]{90}{$\color{red}D^{(19,1)}\color{black}$}&\rotatebox[origin=c]{90}{$\color{red}D^{(18,2)}\color{black}$}&\rotatebox[origin=c]{90}{$\color{red}D^{(17,3)}\color{black}$}&\rotatebox[origin=c]{90}{$\color{red}D^{(16,4)}\color{black}$}&\rotatebox[origin=c]{90}{$\color{red}D^{(15,5)}\color{black}$}&\rotatebox[origin=c]{90}{$\color{red}D^{(14,6)}\color{black}$}&\rotatebox[origin=c]{90}{$\color{red}D^{(13,7)}\color{black}$}&\rotatebox[origin=c]{90}{$\color{red}D^{(12,8)}\color{black}$}&\rotatebox[origin=c]{90}{$D^{(11,9)}$}&\rotatebox[origin=c]{90}{$\color{blue}D^{(10,9,1)}\color{black}$}&\rotatebox[origin=c]{90}{$\color{green}D^{(10,8,2)}\color{black}$}&\rotatebox[origin=c]{90}{$\color{blue}D^{(9,8,2,1)}\color{black}$}&\rotatebox[origin=c]{90}{$\color{green}D^{(9,7,3,1)}\color{black}$}&\rotatebox[origin=c]{90}{$\color{blue}D^{(8,7,3,2)}\color{black}$}&\rotatebox[origin=c]{90}{$\color{green}D^{(8,6,4,2)}\color{black}$}&\rotatebox[origin=c]{90}{$\color{blue}D^{(7,6,4,3)}\color{black}$}\\ \hline
$S((20),\pm)$&&&&&&&&&&1=?\\
$S((19,1),0)$&&&&&&&&&&1=?&1\\
$S((19,2),0)$&&&&&&&&&&?&2&2\\
$S((17,3),0)$&&&&&&&&&&?&1&2&\tikzmarknode{a2}{1}\\
$S((16,4),0)$&&&&&&&&&&?&&&2&2\\
$S((15,5),0)$&&&&&&&&&&?&&$\leq2$&1&2&\tikzmarknode{a}{1}\\
$S((14,6),0)$&&&&&&&&&&?&&$\leq4$&&&2&2\\
$S((13,7),0)$&&&&&&&&&&?&&$\leq2$&1&2&1&2&1\\
$S((12,8),0)$&&16&\tikzmarknode{c2}{\color{white}0\color{black}}&16&\tikzmarknode{c}{\color{white}0\color{black}}&8&&4&&?&&&2&2&&&2\\
$S((11,9),0)$&16&16&8&16&8&8&4&4&\tikzmarknode{d}{2}&?&&2&1&&&2&\tikzmarknode{b}{1}
\end{tabular}
\begin{tikzpicture}[overlay,remember picture]
\node[draw, thin, inner sep=1.5pt, fit=(a) (b)] {};
\node[draw, thin, inner sep=1.5pt, fit=(c) (d)] {};
\node[draw=gray, thin, inner sep=4pt, fit=(a2) (b)] {};
\node[draw=gray, thin, inner sep=4pt, fit=(c2) (d)] {};
\end{tikzpicture}}\]

%19=2^4+2^1+2^0, d_{19}=4+3-3=4
%21=2^4+2^2+2^0, d_{21}=4+3-3=4

In particular, for $n\leq 23$, the only decomposition numbers which cannot be recovered are $[S((20-a,a),0):D^{(10,8,2)}]$ for $5\leq a\leq 7$ and those in the column of  $D^{(\overline{\dbl}(n))^R}$.

Boxes in the above decomposition matrices point out parts of the different matrices where corresponding decomposition numbers are equal. Similar regions always exist when comparing decomposition matrices for $n$ and $n-4$ (due to formulas for decomposition numbers in the results in the introduction and \cite[Theorem 1.4]{m3}).

\section*{Acknowledgements}

The author thanks the referee for helpful comments.

\end{document}